\newtheorem{thm}{Theorem}[section]
\newtheorem{lma}{Lemma}[section]
\newtheorem{prop}{Proposition}[section]
\newtheorem{cor}{Corollary}[section]
\newtheorem{ques}{Question}
\newtheorem{definition}{Definition}
\theoremstyle{remark}
\newtheorem{remark}{Remark}[section]
\newtheorem{example}{Example}
\numberwithin{equation}{section}
\newcommand{\Ric}{\mbox{Ric}}
\newcommand{\R}{\mathbb R}
\newcommand{\be}{\begin{equation}}
\newcommand{\ee}{\end{equation}}
\newcommand{\bee}{\begin{equation*}}
\newcommand{\eee}{\end{equation*}}
\def\p{\partial}
\def\la{\langle}
\def\ra{\rangle}
\def\lf{\left}
\def\ri{\right}
\def\Pi{\displaystyle{\mathbb{II}}}
\def\Rm{\text{Rm}}
\def\dist{\text{dist}}
\def\tf{\tilde{f}}
\def\S{\Sigma}
\def\vh{\vspace{.2cm}}
\def\K{\mathcal{F}}
\def\l{\lambda}
\def\e{\epsilon}
\def\Int{\text{Int}}
\def\a{\alpha}
\def\th{\tilde{h}}
\def\AF{asymptotically flat }
\def\tf{\tilde{f}}
\begin{document}

\title[]{Static potentials on asymptotically \\
flat manifolds}

\author{Pengzi Miao$^1$}
\address[Pengzi Miao]{Department of Mathematics, University of Miami, Coral Gables, FL 33146, USA.}
\email{pengzim@math.miami.edu}
\thanks{$^1$Research partially supported by Simons Foundation Collaboration Grant for Mathematicians \#281105.}

\author{Luen-Fai Tam$^2$}
\address[Luen-Fai Tam]{The Institute of Mathematical Sciences and Department of
 Mathematics, The Chinese University of Hong Kong, Shatin, Hong Kong, China.}
 \email{lftam@math.cuhk.edu.hk}
\thanks{$^2$Research partially supported by Hong Kong RGC General Research Fund  \#CUHK 403108}

\renewcommand{\subjclassname}{
  \textup{2010} Mathematics Subject Classification}
\subjclass[2010]{Primary 83C99; Secondary 53C20}

\begin{abstract}
We consider the question whether a static potential on an asymptotically flat $3$-manifold
 can have nonempty zero set which extends to  the infinity.
 We prove that this does not occur if  the metric is asymptotically Schwarzschild with nonzero mass.
If the asymptotic assumption is relaxed to the usual assumption under which
the total mass is defined, we  prove that the static potential is unique up to scaling
unless the manifold is flat.
We also provide  some discussion concerning  the rigidity
of complete  asymptotically flat 3-manifolds without boundary  that  admit  a static potential.
\end{abstract}

\maketitle

\markboth{Pengzi Miao and Luen-Fai Tam}
{Static potentials on asymptotically flat manifolds}

\section{introduction}

In \cite{Corvino}, Corvino studied localized scalar curvature deformation of a Riemannian metric
and introduced the following definition:

\begin{definition}\label{df-static}
A Riemannian metric $g$ is called {\em static} on a manifold $M$ if the linearized scalar
curvature map  at $g$ has a nontrivial cokernel,
i.e. if there exists a nontrivial function $f$ on $M$ such that
\be \label{eq-static-c}
- (\Delta f) g  + \nabla^2 f - f \Ric  = 0 .
\ee
Here $\nabla^2$, $\Delta $ and $\Ric$ denote   the Hessian, the Laplacian and the Ricci curvature of $g$ respectively.
\end{definition}

We call   a nontrivial solution $f$ to \eqref{eq-static-c}   a {\it static potential} if it exists.
In \cite[Theorem 1]{Corvino}, Corvino proved that   if $(M, g)$ does not have a static potential,
one can deform the scalar curvature of $g$  through variations  having compact support in $M$.

It is known that a static metric  (as defined above) must have  constant scalar curvature (cf. \cite[Proposition 2.3]{Corvino}).
When this  constant   is zero (which is always the case for an asymptotically flat,  static metric),
 \eqref{eq-static-c}  becomes
\be \label{eq-static-f-s}
\nabla^2 f   = f \Ric
\  \ \mathrm{and} \ \ \Delta f  =  0 .
\ee
It is this  equation that explains the implications of Corvino's result  in   mathematical relativity,
where  a  vacuum static spacetime is  a $4$-dimensional Lorentz manifold that is isometric to
$ ( \R^1 \times M, - N^2 d t^2 + g ) $,
where $(M, g)$ is a $3$-dimensional Riemannian manifold,
$N  > 0 $ is a function on $M$, and the pair $(g, N)$
 satisfies
\be \label{eq-static-N}
\nabla^2 N = N \Ric  \ \ \mathrm{and} \ \
\Delta N = 0.
\ee
By \eqref{eq-static-f-s} and \eqref{eq-static-N}, one knows
if  $f$ is a static potential on a  manifold $(M, g)$ of  zero  scalar curvature,
  then  $( \R^1 \times \tilde{M},  - f^2 d t^2 + g )$
is a vacuum static spacetime, where $\tilde{M} = M \setminus f^{-1}(0) $.

There exists   a vast amount of literature concerning $3$-dimensional asymptotically flat  manifolds which
admit a {\em positive}  solution $N$ to \eqref{eq-static-N}  in the asymptotic region
(see  e.g. \cite{Bunting-Masood, Chrusciel98, Anderson,  Miao05, Beig-Schoen, C-G10, M-M13}).
Since the positivity of $N$ is always assumed  in these works,  it is natural to ask:

\begin{ques} \label{ques-main}
 Suppose $(E, g)$ is a $3$-dimensional asymptotically flat end on which there exists a static potential $f$.
 Under what conditions, is  $f$ free of zeros near infinity?
 \end{ques}

We recall the definition  of  an asymptotically flat $3$-manifold.

\begin{definition} \label{def-def-AF}
A Riemannian $3$-manifold $(M, g)$ (perhaps with boundary) is said to be  {asymptotically flat} if
there exists a   compact set $K$ such that  $ M \setminus K$ consists of
a finite number of components  $E_1$, $\ldots$, $E_k$,  called  the ends of $(M, g)$,
 such that each end $E_i$ is
diffeomorphic to $ \R^3 $ minus a ball  and,
under this diffeomorphism,
the metric $g$ on $E_i$  satisfies
\be \label{eq-AF-def-intro}
g_{ij}=\delta_{ij}+b_{ij} \mathrm{ \  with \ }
b_{ij} = O_2 ( | x|^{-\tau} )
\ee
for some constant $\tau>\frac12$. Here $x=( x_1, x_2, x_3 )$ denotes  the standard coordinate on $ \R^3$
and a function $\phi$ satisfies  $ \phi  = O_l ( | x|^{-\tau} )$ provided   $ | \p^i \phi | \le C |x|^{-\tau - i }$ for $ 0 \le i \le l $ and some constant $C$.
\end{definition}

We first describe a necessary condition for  $f$ to be positive near infinity.
On an end $E$ of an asymptotically flat  $(M, g)$,
suppose $f$ is a static potential  and $f>0$ near infinity, 
it is  known (cf. \cite{Beig-1980, Bunting-Masood}) that
there exists a  coordinate chart
$ \{ x_1, x_2, x_3 \}$  on $E$ near infinity   in which
the metric $g$ satisfies
\be \label{eq-AS-def-intro}
g_{ij} = \lf( 1 + \frac{m}{2 | x|} \ri)^4 \delta_{ij}+ p_{ij},
\ee
where  $ |p_{ij}|=O_2(|x|^{-2})$ and   $m$ is a constant that equals
the  ADM mass  (\cite{ADM61})  of $(M, g)$ at the end $E$.
Metrics satisfying the  fall-off  condition given in  \eqref{eq-AS-def-intro}
is often called  {\em asymptotically Schwarzchild} (AS).

Our main result in answering  Question \ref{ques-main} is that the AS
condition is also  a sufficient condition for the zero set $f^{-1}(0)$ to be  bounded, provided the mass is nonzero.

\begin{thm} \label{thm-main-AS}
Let $(M, g)$ be an asymptotically flat $3$-manifold with or without boundary.
If $g$ is asymptotically Schwarzschild on an end $E$ which has nonzero  mass,
then any static potential  $f$ on $E$  must be bounded  and is   either   positive or  negative
 outside a compact set.
\end{thm}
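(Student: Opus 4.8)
The plan is to analyze the static equation $\nabla^2 f = f\,\Ric$, $\Delta f = 0$ directly in the asymptotically Schwarzschild coordinates on the end $E$, treating $f$ as a solution of a second-order elliptic system and exploiting the decay of $\Ric$ together with the nonzero mass term. Since $\Delta f = 0$ and the metric is asymptotically Euclidean, $f$ is harmonic for a Laplacian that is a small perturbation of the flat one, so standard elliptic theory for asymptotically flat manifolds gives that $f$ has an asymptotic expansion at infinity: $f = a + O(|x|^{-\tau'})$ for some constant $a$ and some positive $\tau'$, with matching decay on derivatives. (Here one uses that $|x|$, $1$, and decaying functions are the natural harmonic "modes" near infinity; a priori $f$ could grow linearly, but I will rule that out below using the full static equation, not just $\Delta f = 0$.) The first thing to pin down is therefore the leading behavior of $f$.

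\textbf{Ruling out growth and identifying the constant.} To exclude a growing (linear) term, I would substitute the candidate expansion into $\nabla^2 f = f\,\Ric$. In AS coordinates, the Schwarzschild part of $g$ has Ricci curvature of order $|x|^{-3}$ with a definite sign structure coming from the mass $m$, while the correction $p_{ij} = O_2(|x|^{-2})$ contributes Ricci of order $|x|^{-4}$; hence $\Ric = O(|x|^{-3})$. If $f$ had a linear growing part $f \sim \ell(x)$ with $\ell$ linear, then $f\,\Ric = O(|x|^{-2})$, whereas $\nabla^2 \ell = O(|x|^{-\tau})\cdot$(first derivatives) would be forced to decay faster — one gets a contradiction by comparing the precise leading coefficients, using that the Hessian of a nonconstant linear function in the flat metric is zero so the leading term of $\nabla^2 f$ must come from the metric corrections and cannot match the $|x|^{-2}$ term produced by $m \ne 0$. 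This is the step where the hypothesis $m \ne 0$ is essential. Once linear growth is excluded, $f \to a$ for a finite constant $a$, which already gives the boundedness claim.

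\textbf{Showing $f$ has a sign near infinity.} It remains to show $f$ is eventually positive or eventually negative. If $a \ne 0$ this is immediate, since $f \to a$. So the crux is to show $a \ne 0$. Suppose $a = 0$; then $f \to 0$ at infinity and $f$ is a harmonic function (for the metric Laplacian) decaying to zero, so by the expansion $f = O(|x|^{-1})$ with leading term $c/|x|$ for some constant $c$. Plugging $f = c/|x| + (\text{lower order})$ into $\nabla^2 f = f\,\Ric$: the left side has a leading term of order $|x|^{-3}$ (the flat Hessian of $1/|x|$), while the right side is $f \Ric = O(|x|^{-1})\cdot O(|x|^{-3}) = O(|x|^{-4})$. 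Matching the $|x|^{-3}$ terms forces the leading Hessian term of $c/|x|$ to vanish, hence $c = 0$; iterating this bootstrap argument (each round improves the decay rate of $f$ by the decay rate of $\Ric$) shows $f$ decays faster than any polynomial near infinity. Then a unique continuation argument for the overdetermined elliptic system $\nabla^2 f = f\,\Ric$, $\Delta f = 0$ — or more elementarily, the fact (implicit in the static structure, cf. the discussion of $\tilde M = M \setminus f^{-1}(0)$) that $|\nabla f|^2 + (\text{curvature terms})\cdot f^2$ satisfies a differential inequality forcing $f \equiv 0$ once it and its gradient decay too fast — yields $f \equiv 0$ on $E$, contradicting that $f$ is a nontrivial static potential. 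Hence $a \ne 0$ and $f$ has a fixed sign outside a compact set.

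\textbf{Main obstacle.} The routine parts are the elliptic-theory expansion of $f$ and the curvature computations in AS coordinates. The delicate point is the bootstrap/unique-continuation step that turns "$f$ decays to zero" into "$f \equiv 0$": one must be careful that the static system really is rigid enough, and that the decay rate $\tau$ and the AS correction $p_{ij}=O_2(|x|^{-2})$ are strong enough to run the iteration cleanly without losing control of the error terms. I expect the argument to hinge on writing the static equations as a first-order system for $(f, \nabla f)$ and invoking a Carleman-type or Aronszajn-type unique continuation estimate at infinity, adapted to the asymptotically flat setting.
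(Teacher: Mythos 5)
Your reduction of the problem to two steps (boundedness, then nonvanishing of the limit) matches the logical structure of the paper, but the central step --- ruling out a linearly growing $f$ --- has a genuine gap. Your order counting is not correct for an asymptotically Schwarzschild metric: the Christoffel symbols are $O(|x|^{-2})$, so for a linear function $\ell$ the covariant Hessian $\nabla^2 \ell = -\Gamma^k_{ij}\partial_k \ell$ is $O(|x|^{-2})$, which is \emph{exactly} the same order as $f\,\Ric \sim \ell \cdot m|x|^{-3} = O(|x|^{-2})$; it is not ``forced to decay faster.'' Worse, the correction term $h$ in $f=\ell+h$ is only known to satisfy $\partial h = O(|x|^{-1})$ (and $h=O(\ln |x|)$ when $\tau=1$), so its flat Hessian also enters at the same order $|x|^{-2}$. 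Hence ``comparing the precise leading coefficients'' of the explicit pieces does not close the argument: the unknown $h$ sits in the balance at exactly the order where the mass term appears, and controlling it amounts to deriving the detailed asymptotics of static KIDs (this is essentially the Beig--Chru\'sciel route, which is far from a routine coefficient match). The paper avoids this entirely by a geometric mechanism: if $f$ is unbounded, Proposition \ref{prop-AF-static-zeroset} shows $f^{-1}(0)$ is, near infinity, a totally geodesic graph asymptotic to a plane; Lemma \ref{lma-static-basic} (iii) (a consequence of Tod's identities) forces the two tangential eigenvalues of $\Ric$ to coincide along this surface; but the explicit Schwarzschild Ricci asymptotics \eqref{eq-Ricci-est} show that along such an asymptotic plane the tangential Ricci values differ at leading order $m|y|^{-3}$ when $m\neq 0$ --- a contradiction. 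That is the step your proposal is missing, and it is precisely where the hypotheses ``asymptotically Schwarzschild'' and ``$m\neq0$'' are used.

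Your second step (showing the limit $a$ is nonzero) is closer in spirit to the paper's Proposition \ref{prop-AF-static-f} (ii), but as written it also leans on an unproved assertion: after bootstrapping to superpolynomial decay you invoke a Carleman/unique-continuation-at-infinity principle, which you flag but do not supply, and which is not elementary. The paper does not need it: it first shows the zero set of a bounded static potential is compact (maximum principle against the mean-convex coordinate spheres), so $f$ has a sign near infinity; then a Green's-function barrier forces the coefficient $A$ in $f = A|x|^{-1}+o(|x|^{-1})$ to be strictly positive, while the decay bootstrap from $\nabla^2 f = f\Ric$ shows $f=O(|x|^{-1-\tau})$, contradicting $A>0$. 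So both halves of your outline would need substantial repair, the first one essentially requiring a different idea.
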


The  main tool in our proof of  Theorem \ref{thm-main-AS}  is Proposition \ref{prop-AF-static-zeroset}, which describes
 the asymptotic behavior of the zero set of $f$ assuming it is  unbounded. We also make use of an observation
 in Lemma \ref{lma-static-basic} (iii)
that  the Ricci curvature of $g$, when restricted to the zero set of $f$, is  a multiple of the induced metric.

In relation to the question of its positivity,  we also ask ``how many" static potentials may   exist.
We prove

\begin{thm} \label{thm-main-dimension}
Let $(M, g)$ be a connected,  asymptotically flat $3$-manifold with or without boundary.
Let $\K$ be the space of all solutions to \eqref{eq-static-f-s}.
Let  $\dim (\K) $ be  the dimension of $\K$.
Then
 $ \dim (\K) \le 1 $ unless $(M, g)$ is flat.
\end{thm}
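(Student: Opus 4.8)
The plan is to exploit the overdetermined nature of the static equation \eqref{eq-static-f-s} together with a unique-continuation-type argument at infinity. Suppose $f_1, f_2 \in \K$ are linearly independent. First I would fix the asymptotically flat end $E$ of $(M,g)$ and analyze the asymptotics of each $f_i$ there. Since $\Delta f_i = 0$ and $g$ is asymptotically flat, each $f_i$ is a harmonic function with controlled growth; the standard expansion of harmonic functions on an AF end gives $f_i = a_i + O(|x|^{-\sigma})$ (with derivative decay) for constants $a_i$, where $\sigma > 0$ depends on $\tau$. The key point is that by replacing $f_1, f_2$ with suitable linear combinations I may arrange that at the end $E$ one of them, say $\tf := \tf$ (a normalized combination), tends to $0$ at infinity, while the other tends to a nonzero constant — unless \emph{every} element of $\K$ tends to $0$ at $E$, a degenerate case I would handle separately below.

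Next I would show a combination tending to zero at infinity must vanish identically. The equation $\nabla^2 \tf = \tf\, \Ric$ together with $\Delta \tf = 0$ is a second-order elliptic system for $\tf$; on the AF end, writing it in the background coordinates, it has the schematic form $\partial^2 \tf = \tf \cdot O(|x|^{-2-\tau}) + (\text{lower order in } g)\cdot \partial\tf$. For a solution decaying at infinity one obtains, via integration by parts on large coordinate balls $B_R \subset E$ (using $\la \nabla^2\tf, \nabla^2\tf\ra = \tf\, \la \Ric, \nabla^2 \tf\ra$, the Bochner formula $\tfrac12\Delta|\nabla\tf|^2 = |\nabla^2\tf|^2 + \la\nabla\tf,\nabla\Delta\tf\ra + \Ric(\nabla\tf,\nabla\tf) = |\nabla^2\tf|^2 + \Ric(\nabla\tf,\nabla\tf)$, and $\Delta(\tf^2) = 2|\nabla\tf|^2$), an integral identity whose boundary terms over $\partial B_R$ vanish as $R\to\infty$ because of the decay of $\tf$, $\nabla\tf$, and $\nabla^2\tf$. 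This forces $\tf \equiv 0$ on the connected manifold $M$ (using that $M$ is connected and that $\tf$ vanishing on an open set propagates by unique continuation for the elliptic system), contradicting linear independence. Hence $\dim(\K) \le 1$ unless we are in the degenerate situation.

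In the degenerate situation every static potential decays to $0$ at the end $E$. Here I would use the trace of the static equation and the structure results already at hand. Taking the trace of $-(\Delta f)g + \nabla^2 f - f\Ric = 0$ gives $f R = 0$, so on the open set where $f \ne 0$ the scalar curvature $R$ vanishes; combined with $\Delta f = 0$ and $\nabla^2 f = f\Ric$, one analyzes $\Delta|\nabla f|^2$: the Bochner identity gives $\tfrac12\Delta|\nabla f|^2 = |\nabla^2 f|^2 + \Ric(\nabla f, \nabla f) = f^{-2}|\nabla^2 f|^2 |f|^2 + f^{-1}\la\nabla^2 f,\nabla f\ra\cdot(\dots)$ — more cleanly, from $\nabla^2 f = f\Ric$ one computes $\Delta(|\nabla f|^2 + \text{something}) $ in a form showing $|\nabla f|^2 + c f^2$ is harmonic or subharmonic for an appropriate constant, and then the maximum principle on the AF manifold (with $f, \nabla f \to 0$ at every end) forces $f$ and $\nabla f$, hence $\Ric$ via $f\Ric = \nabla^2 f$, to vanish where $f \ne 0$; unique continuation then gives $f \equiv 0$, so in fact the degenerate case cannot support any nontrivial static potential at all. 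The manifold is then flat only in the remaining trivial reading — I would instead observe that if $\dim(\K) \ge 2$ one of the two mechanisms above applies and yields flatness: more precisely, having two independent solutions, the argument produces a static potential that is a nonzero constant at infinity \emph{and} one decaying to zero; the decaying one is $\equiv 0$ by the above, which is the contradiction, \emph{unless} the integration-by-parts boundary terms do not decay, which happens exactly when the metric is flat (so $\Ric = 0$ and \eqref{eq-static-f-s} reduces to $\nabla^2 f = 0$, whose solution space is genuinely multi-dimensional on flat $\R^3$).

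The main obstacle I anticipate is making the boundary-term analysis at infinity rigorous with only $\tau > \tfrac12$ decay: the harmonic expansion of $f_i$ and the decay of $\nabla^2 f_i$ must be good enough that $\int_{\partial B_R}(\dots) \to 0$, and separating the genuinely flat case from the decaying-potential case cleanly — i.e. showing the dichotomy ``$\K$ contains a potential asymptotic to a nonzero constant, or $\Ric \equiv 0$'' — requires care about the interplay of the harmonic growth estimate with the coupling term $f\Ric$. I expect Lemma \ref{lma-static-basic} and the structure of zero sets from Proposition \ref{prop-AF-static-zeroset} to supply exactly the pointwise control needed to close this gap.
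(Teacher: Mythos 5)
Your proposal has a genuine gap at its very first step: you assume that, because each $f_i$ is harmonic on the asymptotically flat end, it admits an expansion $f_i = a_i + O(|x|^{-\sigma})$, i.e.\ that every static potential tends to a constant at infinity. This is false in general. A static potential on an AF end need only have \emph{at most linear growth} (this is Lemma \ref{lma-linear-growth}), and the correct asymptotic structure is $f = a_1x_1 + a_2x_2 + a_3x_3 + h$ with $h$ of sublinear growth (Proposition \ref{prop-AF-static-f}(i)); the linear part is genuinely present already in the model case of flat $\R^3$, where $f = x_1$ solves \eqref{eq-static-f-s}. Consequently your reduction — ``replace $f_1,f_2$ by combinations so that one tends to $0$ and the other to a nonzero constant'' — only works when both potentials are bounded; if one or both have nontrivial linear parts in different directions, no linear combination tends to a constant at all. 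The bounded-bounded case is in fact the easy case (it is Case 1 of the paper's proof: normalize both to $1 - m/|x| + o(|x|^{-1})$ via Proposition \ref{prop-AF-static-f}(ii) and note the difference would be a bounded static potential with zero limit, which is impossible). The entire difficulty of the theorem lies in the unbounded cases, which your proposal never addresses: the paper handles these by analyzing the zero set of an unbounded potential near infinity (Proposition \ref{prop-AF-static-zeroset}, showing it is an asymptotically planar graph with total geodesic curvature of large circles tending to $2\pi$), by the identity that $Kf^3$ is constant along the zero set of another potential (Lemma \ref{lma-eq-on-zero}), by the local rigidity result that two independent potentials with intersecting zero sets force flatness (Theorem \ref{thm-local-intro}, proved via Tod's identities), and finally by analyticity of static metrics to propagate flatness from an open set to all of $M$.

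A secondary problem is your closing dichotomy: you attribute the flat case to ``the integration-by-parts boundary terms do not decay, which happens exactly when the metric is flat.'' There is no such equivalence — boundary-term decay is governed by the fall-off rates of $f$, $\nabla f$, $\Ric$ and has nothing to do with whether $\Ric \equiv 0$ — so even in the bounded setting your mechanism does not isolate the flat case; in the paper flatness emerges instead from the vanishing of the Gaussian curvature of zero-set surfaces (via Lemma \ref{lma-static-basic}(iv)) together with analytic continuation. To repair your approach you would need, at minimum, the linear-plus-sublinear asymptotics of Proposition \ref{prop-AF-static-f} and then an argument covering the case of a potential with nonzero linear part, which is precisely where the paper's zero-set geometry enters and which your sketch omits entirely.
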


In the proof of Theorem \ref{thm-main-dimension}, beside  Proposition \ref{prop-AF-static-zeroset},
we also use  a local result that describes the dimension of $\K$ on any open set. 
We prove the following result using some techniques by Tod \cite{Tod-2000}. 

\begin{thm}\label{thm-local-intro}
Let $(M, g)$ be a connected, $3$-dimensional Riemannian manifold of zero scalar curvature.
Let $\K$ be the space of static potentials on $(M, g)$.
Then
\begin{enumerate}
  \item [(i)] $\dim(\K)\le 2$ unless $(M,g)$ is flat.
  \item [(ii)] If there exist two linearly independent functions  $f_1, f_2\in \K$ such that 
  $f_1^{-1}(0)\cap f_2^{-1}(0) \neq \emptyset $, then $(M,g)$ is flat.
\end{enumerate}
\end{thm}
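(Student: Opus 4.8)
The plan is to isolate a single pointwise fact and feed it into both parts. \emph{The core claim is:} if $f_1,f_2\in\K$ are linearly independent and $f_1(q)=f_2(q)=0$ for some $q\in M$, then $\Rm$ vanishes at $q$. I would prove it as follows. Write $\Sigma_i=f_i^{-1}(0)$. Since a nontrivial static potential cannot vanish together with its gradient at a point (along any unit-speed geodesic $\gamma$ it satisfies the linear second-order ODE $\varphi''=\Ric(\dot\gamma,\dot\gamma)\,\varphi$, so vanishing to second order at a point forces it to vanish identically), each $\nabla f_i(q)\neq 0$; moreover $\nabla f_1(q)$ and $\nabla f_2(q)$ are linearly independent, since otherwise a combination $f_2-\lambda f_1$ would vanish to second order at $q$, hence identically. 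Thus $\Sigma_1,\Sigma_2$ are smooth hypersurfaces meeting transversally at $q$. Now recall from Lemma~\ref{lma-static-basic}(iii) that $\Ric$ restricted to $T\Sigma_i$ is a pointwise multiple of the induced metric; differentiating $\nabla^2 f_i=f_i\Ric$ once along the unit normal $\nu_i$ of $\Sigma_i$ (using that $\nabla^2 f_i$ vanishes identically on $\Sigma_i$ together with the Ricci identity for third covariant derivatives) shows in addition that $\nu_i$ is an eigenvector of $\Ric$; since $\Ric$ is self-adjoint it follows that the Ricci endomorphism $\Ric^\sharp$ acts at $q$ as a scalar $c_i$ on the plane $T_q\Sigma_i$. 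Picking $0\neq v\in T_q\Sigma_1\cap T_q\Sigma_2$ gives $c_1 v=\Ric^\sharp v=c_2 v$, so $c_1=c_2=:c$, whence $\Ric^\sharp$ acts as $c\,\mathrm{id}$ on $T_q\Sigma_1+T_q\Sigma_2=T_qM$; as the scalar curvature is zero, $c=0$, so $\Ric_q=0$, and in dimension three this gives $\Rm_q=0$.

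For part (i): assume $\dim(\K)\ge 3$ and let $q\in M$ be arbitrary. The evaluation functional $h\mapsto h(q)$ on $\K$ has kernel of dimension at least $\dim(\K)-1\ge 2$, so it contains two linearly independent static potentials vanishing at $q$, and the core claim gives $\Rm_q=0$. Since $q$ was arbitrary, $(M,g)$ is flat. For part (ii): if $f_1,f_2\in\K$ are linearly independent with $\Sigma_1\cap\Sigma_2\neq\emptyset$, the core claim already yields $\Rm\equiv 0$ on $\Sigma_1\cap\Sigma_2$. To promote this to flatness of $(M,g)$ I would follow Tod~\cite{Tod-2000}: the vector field $X=f_2\nabla f_1-f_1\nabla f_2$ is a Killing field (the usual consequence of $\nabla^2 f_i=f_i\Ric$) that vanishes exactly on $\Sigma_1\cap\Sigma_2$ and is hypersurface-orthogonal, since $X^\flat\wedge dX^\flat=0$. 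Linearizing the isometric flow of $X$ near a point of $\Sigma_1\cap\Sigma_2$ and using the hypersurface-orthogonality put $g$ in an axially symmetric Weyl-type normal form, with axis $\Sigma_1\cap\Sigma_2$ and with $\Sigma_1,\Sigma_2$ two meridian half-planes; the zero-scalar-curvature static equations then reduce to an overdetermined system for the two Weyl potentials whose only solution, given the regularity condition on the axis and $\Rm=0$ there, is the flat one. Finally, a metric carrying a static potential is real-analytic in suitable local coordinates, so flatness near $\Sigma_1\cap\Sigma_2$ forces $(M,g)$ to be flat.

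The step I expect to be the real obstacle is the last part of (ii): passing from $\Rm=0$ on the codimension-two set $\Sigma_1\cap\Sigma_2$ to flatness of $(M,g)$. Vanishing of curvature on such a small set gives nothing by itself, so one must genuinely use the structure of the static equations near the common zero set — the explicit symmetry reduction above, in the spirit of Tod, rather than any soft continuation argument — and then invoke analyticity to spread flatness over all of the connected manifold $M$.
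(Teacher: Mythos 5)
Your ``core claim'' and its proof are correct: at a common zero $q$ of two linearly independent static potentials the gradients are nonzero and linearly independent, $\Ric$ acts as a scalar on each tangent plane $T_q\Sigma_1$, $T_q\Sigma_2$ with the normals as eigenvectors (this is exactly Lemma \ref{lma-static-basic}(ii),(iii)), and since the two planes share a line and span $T_qM$, $\Ric_q$ is a multiple of $g$, hence zero because $R=0$; in dimension three $\Rm_q=0$. Your deduction of part (i) — for arbitrary $q$ the kernel of the evaluation map on $\K$ has dimension $\ge 2$ once $\dim\K\ge 3$, so $\Ric_q=0$ for every $q$ — is valid and genuinely different from the paper's proof of Proposition \ref{prop-Static-dim}, which works on an open set where three potentials are all nonvanishing and uses Tod's uniqueness statements (Proposition \ref{prop-tod}(ii),(iii)) together with the quotient Lemma \ref{lma-static-quotient}. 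Your pointwise linear-algebra argument is more elementary; the paper's heavier machinery is, however, what carries the burden in part (ii).

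Part (ii) is where your proposal has a genuine gap, and you have located it but not closed it. The core claim only gives $\Rm=0$ on the curve $S=\Sigma_1\cap\Sigma_2$, and your promotion to flatness — pass to the Killing field $X=f_2\nabla f_1-f_1\nabla f_2$, put $g$ into an axisymmetric Weyl-type normal form with axis $S$, and assert that the ``overdetermined system'' for the Weyl potentials has only the flat solution given axis regularity and $\Rm=0$ there — is a sketch whose key assertions are unproved and problematic. First, the Weyl reduction describes the metric where some static potential is positive; at $q\in S$ every element of the span of $f_1,f_2$ vanishes, so no static potential is positive near the axis and the normal form with a regular axis is not available there (in the associated spacetime the sets $f_i^{-1}(0)$ are horizon-type boundaries, not part of the static region). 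Second, the rigidity you invoke — that a Weyl solution regular along a piece of the axis with curvature vanishing on it must be flat — is not a standard fact and is not argued; as your own opening remark concedes, vanishing of curvature on a one-dimensional set does not propagate, even for analytic metrics, so this is precisely the point that needs a real proof. Third, the claim that $\Sigma_1,\Sigma_2$ are meridian half-planes presupposes a compatibility of $f_1,f_2$ with the rotation generated by $X$ (note $X(f_1)\neq 0$ in general) that is never established.

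For comparison, the paper closes this gap by a different mechanism: by Lemma \ref{lma-eq-on-zero}, $K\,f_2^3$ is constant on each component of $f_1^{-1}(0)$, so a nonempty intersection forces $K\equiv 0$ on the \emph{entire} components $P_1,P_2$, not merely on $S$ (Proposition \ref{prop-two-zeros}(i)); then near points of $P_1\setminus f_2^{-1}(0)$ the quotient $Z=f_1/f_2$ gives the local product form $g=f_2^2\,dt^2+g_0$ with $g_0$ flat (Lemma \ref{lma-static-quotient}(iii)), and Lemma \ref{lma-zero-curvature}, via the distinct-eigenvalue dichotomy of Lemma \ref{lma-distinct}, yields flatness on an open set; only then does analyticity on the path-connected set $M\setminus S$ finish the argument (Theorem \ref{thm-local-flat}). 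Unless you can substantiate the overdetermined-system step, you need an argument of this type — something like the constancy of $Kf^3$ along zero sets — to produce flatness on an open set before analyticity can be invoked; as written, part (ii) is not proved.
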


Our method in proving Theorem \ref{thm-main-AS} and Theorem  \ref{thm-main-dimension} also allow us to
obtain some rigidity  results for  complete, asymptotically flat manifolds without boundary  which admit a static potential.
For instance, a direct corollary of Theorem \ref{thm-main-AS}, Theorem \ref{thm-rigidity-bounded} in Section  4 and the
Riemannian positive mass theorem \cite{SchoenYau79, Witten81} is  that

\begin{cor}\label{cor-main-rigidity-bounded}
Let $(M, g)$ be a complete, connected,  asymptotically flat $3$-manifold without boundary.
Suppose  $(M, g)$ is asymptotically Schwarzschild at each end.
If there is  a static potential on $(M, g)$, then
 $(M, g) $ is   isometric to either the Euclidean space $(\R^3, g_0)$ or  a  spatial Schwarzschild manifold
 $ ( \R^3 \setminus \{ 0  \},  ( 1 + \frac{  m}{ 2 | x | }  )^4 g_0 ) $ with $m>0$.
 \end{cor}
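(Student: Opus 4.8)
The plan is to assemble three ingredients: the fact that an asymptotically flat static metric is scalar-flat; the information about the sign of the static potential near each end supplied by Theorem~\ref{thm-main-AS}; and the rigidity statement for complete asymptotically flat manifolds carrying a bounded static potential, namely Theorem~\ref{thm-rigidity-bounded}, sharpened by the Riemannian positive mass theorem \cite{SchoenYau79, Witten81}.

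First I would record that, since $g$ admits a static potential, $g$ has constant scalar curvature, and since $g$ is asymptotically flat this constant vanishes; hence $R_g \equiv 0$. Because $(M,g)$ is complete with empty boundary, the Riemannian positive mass theorem, in the form valid for a manifold with finitely many ends, gives that the mass $m_i$ of each end $E_i$ is nonnegative, and that $m_i = 0$ for some $i$ forces $(M,g)$ to be isometric to $(\R^3, g_0)$. Thus it remains only to treat the case in which every end has positive mass.

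Assuming then $m_i > 0$ for all $i$, and recalling that $g$ is asymptotically Schwarzschild at every end, I would apply Theorem~\ref{thm-main-AS} to each $E_i$: it shows that $f$ is bounded on $E_i$ and, outside a compact subset of $E_i$, is either everywhere positive or everywhere negative. Since $M$ differs from the union of its ends by a compact set, this means $f$ is a bounded static potential on $(M,g)$ that has a fixed sign near infinity along each end (the sign may genuinely differ from one end to another, as it does on the Schwarzschild slice, where $f^{-1}(0)$ is the horizon sphere separating the two ends). Feeding this into Theorem~\ref{thm-rigidity-bounded} then yields that $(M,g)$ is isometric either to $(\R^3, g_0)$, when $f$ has no zeros, or to a spatial Schwarzschild manifold $(\R^3 \setminus \{0\}, (1 + \tfrac{m}{2|x|})^4 g_0)$, when $f^{-1}(0) \neq \emptyset$; in the latter case the parameter $m$ coincides with the mass of an end and is therefore positive, which completes the argument.

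The geometric heart of the matter is Theorem~\ref{thm-rigidity-bounded}, so within this corollary the delicate points are bookkeeping rather than new analysis: (i) verifying that the conclusion of Theorem~\ref{thm-main-AS} is exactly the hypothesis required by Theorem~\ref{thm-rigidity-bounded}, in particular allowing $f$ to change sign between different ends; and (ii) using the positive mass theorem in two distinct roles---to dispatch the zero-mass case through its rigidity clause, and to pin down $m > 0$ in the Schwarzschild alternative. One should also double-check that the multi-end version of the positive mass theorem indeed forces flatness as soon as a single end has vanishing mass, so that the dichotomy ``some end of zero mass'' versus ``all ends of positive mass'' is genuinely exhaustive.
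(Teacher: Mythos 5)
Your proof is correct and follows essentially the same route the paper intends: the corollary is stated there as a direct consequence of Theorem~\ref{thm-main-AS}, Theorem~\ref{thm-rigidity-bounded} and the Riemannian positive mass theorem, with the positive mass theorem disposing of the zero-mass case via its rigidity clause and the nonzero-mass case handled by boundedness of $f$ on each end plus Theorem~\ref{thm-rigidity-bounded}, exactly as you argue. The only remark is that your extra bookkeeping (matching the two alternatives to whether $f^{-1}(0)$ is empty, and re-deriving $m>0$) is harmless but unnecessary, since Theorem~\ref{thm-rigidity-bounded} already delivers the dichotomy with $m>0$.
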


After the initial draft of this paper was completed, we were  informed by  Piotr Chru\'{s}ciel
and  Greg Galloway  that  there  in fact exists a spacetime approach toward Question \ref{ques-main}.  
Namely, using results in  \cite{CG69} on Cauchy development, 
results in \cite{Moncrief76, FMM80, Chrusciel91} on 
vacuum KID development (also see \cite{BC97}),
results in \cite{CO81} concerning existence of boost-type domains, and in
 particular the result of Beig-Chru\'sciel  in \cite[Theorem 1.1]{Beig-Chrusciel-97}
 which excludes boost-type Killing vector fields under appropriate conditions, 
 Question 1 can also be {approached}  in the spacetime setting.  
 
 We deem this spacetime method  a very natural, important and physically motivated way to
understand the structure of the zero set of static potentials.
Comparatively,  our approach toward Question \ref{ques-main} 
is a purely initial data based method and    our method is   more
 elementary.

The organization of the paper is as follows.
In Section 2, we discuss local properties of static metrics and prove Theorem \ref{thm-local-intro}.
In Section 3, we analyze static potentials on an asymptotically flat end  and
prove Theorems \ref{thm-main-AS} and \ref{thm-main-dimension}.
In Section 4, we  provide some discussion of  rigidity questions for complete \AF $3$-manifolds
which admits a static potential.

The authors want to give deep thanks to  Piotr Chru\'{s}ciel and Greg Galloway
for introducing them to the spacetime approach mentioned above. The authors also want to  thank Justin Corvino, Marc Mars  
and Richard Schoen for  their  helpful comments  on  this work.

\section{Local properties of static metrics}  \label{section-local}

In this section,  we assume that
 $(M, g) $ is a $3$-dimensional, connected, smooth Riemannian manifold whose scalar curvature $R$ is  zero.
 By \eqref{eq-static-f-s}, a nontrivial function $f$   is a static potential  on $(M, g)$ if 
\be\label{eq-static-1}
 \nabla^2   f = f \Ric   .
\ee

In \cite{Tod-2000}, Tod studied the question when a spatial metric could
give rise to a static spacetime in more than one way.
In our work,
we often need to apply Proposition 2 (ii), Corollary 3 (i) and equation (15) in \cite{Tod-2000}.
 For convenience, we list these results of Tod in the next  Proposition. We also sketch the proof.

\begin{prop}[Tod \cite{Tod-2000}]  \label{prop-tod}
Let $ \{ e_1, e_2, e_3 \}$ be an orthonormal frame that diagonalizes the Ricci curvature at a given point $p$.
\begin{itemize}

\item[(i)]  Suppose  $ f $ is a static potential.  Then
\begin{equation*} \label{eq-tod}
\begin{split}
 f ( R_{33;1} - R_{31;3} ) = & \ ( R_{22} - R_{33} ) f_{;1}  \\
 f ( R_{11;2} - R_{12;1} ) =  & \ ( R_{33} - R_{11} ) f_{;2}  \\
 f ( R_{22;3} - R_{23;2} ) = & \ ( R_{11} - R_{22} ) f_{;3}  .
 \end{split}
 \end{equation*}

\item[(ii)] Suppose $ \{ R_{11}, R_{22}, R_{33 } \} $ are  distinct and  suppose $ N$, $V$ are two positive static potentials.
Then $ V = c N$ for some constant $c$.

\item[(iii)] Suppose $ R_{11} = R_{22} \neq R_{33}$ and suppose $ N$ is a positive static potential. If $ f $ is another static potential,
then $ Z = N^{-1} f $ satisfies
$ Z_{;1} = Z_{; 2} = 0 . $

\end{itemize}

\end{prop}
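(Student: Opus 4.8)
The plan is to derive all three identities from the static equation $\nabla^2 f = f\,\Ric$ together with the contracted second Bianchi identity, exploiting that we are free to evaluate everything at the point $p$ in the frame $\{e_1,e_2,e_3\}$ where the Ricci tensor is diagonal. First I would recall that since $R=0$ the contracted Bianchi identity reads $\div\Ric = \tfrac12\nabla R = 0$, i.e. $R_{ij;j}=0$ in an orthonormal frame; this will be used to rewrite divergence terms. The starting point is to commute covariant derivatives on the static equation. Taking a derivative of $f_{;ij}=f R_{ij}$ and antisymmetrizing, one gets $f_{;ijk}-f_{;ikj} = f_{;k}R_{ij} - f_{;j}R_{ik} + f(R_{ij;k}-R_{ik;j})$, while the Ricci identity gives $f_{;ijk}-f_{;ikj} = -R_{\ell ijk}f_{;\ell}$. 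In dimension three the full Riemann tensor is determined by the Ricci tensor (the Weyl tensor vanishes), so $R_{\ell ijk}$ can be written explicitly in terms of $R_{ij}$, $g_{ij}$, and $R=0$. Substituting and evaluating at $p$ in the diagonalizing frame should collapse the right-hand side dramatically because $R_{ij}=\delta_{ij}R_{ii}$ there.

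The key computation is therefore: expand the three-dimensional identity
\[
R_{\ell ijk} = g_{\ell j}R_{ik} + g_{ik}R_{\ell j} - g_{\ell k}R_{ij} - g_{ij}R_{\ell k} - \tfrac{R}{2}\bigl(g_{\ell j}g_{ik} - g_{\ell k}g_{ij}\bigr),
\]
contract it with $f_{;\ell}$, and combine with the Bianchi-type relation above. After setting $R=0$ and choosing, say, $(i,j,k)=(2,3,1)$ at $p$, the metric terms involving $\delta_{ij}$ with $i\neq j$ drop out, and what survives on the curvature-derivative side is exactly $f\bigl(R_{33;1}-R_{31;3}\bigr)$ up to the contributions one can reorganize using $R_{ab;b}=0$, while the $f_{;\ell}$-terms reduce to $(R_{22}-R_{33})f_{;1}$. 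Cyclically permuting the indices $1\to2\to3\to1$ yields the other two equations. I would present this by writing out the commutation identity once, substituting the three-dimensional curvature expression, then specializing indices; the cyclic symmetry makes two of the three lines automatic once the first is done.

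For parts (ii) and (iii) the plan is to feed the identities of (i) back in. When $\{R_{11},R_{22},R_{33}\}$ are distinct, (i) lets us solve for the components $f_{;1},f_{;2},f_{;3}$ algebraically in terms of $f$ and first derivatives of the Ricci curvature, i.e. the gradient of any static potential is determined pointwise by $f$ itself and the geometry. Concretely, if $N$ is a fixed positive static potential and $f$ another, then $Z=N^{-1}f$ has $\nabla Z = N^{-1}(\nabla f - Z\nabla N)$, and plugging $f=ZN$ into the three relations of (i) — using that $N$ also satisfies them — forces $\nabla Z$ to vanish in the directions dictated by the distinctness pattern of the eigenvalues. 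In the all-distinct case this gives $\nabla Z=0$ on a dense open set, hence $Z$ is locally constant, hence (by connectedness and unique continuation for the static equation, or just continuity) $f=cN$; in the case $R_{11}=R_{22}\neq R_{33}$ only the $e_1,e_2$ directions are controlled, giving $Z_{;1}=Z_{;2}=0$.

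The main obstacle I anticipate is purely bookkeeping: getting the three-dimensional Riemann-in-terms-of-Ricci substitution right with consistent index conventions, and correctly using $R_{ij;k}$ antisymmetry and the vanishing divergence so that the surviving terms assemble into the precise combination $R_{33;1}-R_{31;3}$ rather than some equivalent-looking but differently-grouped expression. A secondary subtlety is that the diagonalizing frame exists only pointwise, so one must be careful that the identities in (i) are genuine tensorial/scalar identities valid at $p$ (they are, since both sides are evaluated at $p$), and in (ii)--(iii) one must handle the set where the eigenvalue-multiplicity pattern changes — typically by arguing on the open dense set where the stated pattern holds and then invoking continuity or analyticity of the metric. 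Since the paper only needs these as cited tools, a sketch that indicates the commutation identity, the 3D curvature substitution, and the algebraic inversion should suffice.
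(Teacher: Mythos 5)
Your overall strategy is the same as the paper's: differentiate $f_{;ij}=fR_{ij}$, antisymmetrize and use the Ricci identity, substitute the three-dimensional expression of the Riemann tensor in terms of $\Ric$ with $R=0$, and evaluate at $p$ in the diagonalizing frame; parts (ii) and (iii) are then obtained exactly as in the paper, by using (i) to see that $\nabla\log$ of a positive potential is determined by the geometry where the eigenvalues are distinct (so $V=cN$ locally, then globally by unique continuation for harmonic functions), and by applying (i) to both $N$ and $f=ZN$ and subtracting to kill the directions where the eigenvalue differences are nonzero. That part of your sketch is correct, up to one wording issue: distinctness of the eigenvalues near $p$ gives an \emph{open neighborhood} of $p$, not a dense open set, but this does not affect the argument.

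The one step that would fail as written is your index specialization in (i). With $(i,j,k)=(2,3,1)$, i.e.\ all three indices distinct, every term in the commutation identity and in the $3$-dimensional curvature substitution that survives at $p$ involves an off-diagonal component $R_{ij}$ or $g_{ij}$ with $i\neq j$, so both sides collapse and you are left with $f\,(R_{23;1}-R_{21;3})=0$, which is not any of the three stated equations, and no reorganization using $R_{ab;b}=0$ recovers them (the contracted Bianchi identity is in fact not needed anywhere in this proof). The stated identities come from taking a \emph{repeated} pair: e.g.\ $a=b=3$, $c=1$ in the identity $2(f_{;b}R_{ac}-f_{;c}R_{ab})+g_{ac}f_{;d}R^d_{\ b}-g_{ab}f_{;d}R^d_{\ c}=f(R_{ab;c}-R_{ac;b})$ gives $f(R_{33;1}-R_{31;3})=(-2R_{33}-R_{11})f_{;1}=(R_{22}-R_{33})f_{;1}$, using only $R_{11}+R_{22}+R_{33}=0$; the other two lines follow by cyclic permutation. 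With that correction your sketch coincides with the paper's proof.
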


\begin{proof}
(i) Let $ \{ a, b, c, \ldots \}$ denote indices that run through $ \{ 1, 2, 3 \}$. Differentiating the static equation, one has
$$ f_{; abc} = f_{;c} R_{ab} + f R_{ab;c} . $$
Let $ R^d_{ \ a c b} $ be the curvature tensor. (In our notation, $ R^d_{ \ a c b} $ is  given by
$$  \nabla_{\p_c} \nabla_{\p_b} \p_a - \nabla_{\p_b} \nabla_{\p_c}  \p_a = R_{ \ a c b}^d \p_d  $$
in a local coordinate chart.)
Then
\be \label{eq-nabc}
\begin{split}
 R_{\ abc}^d f_{;d} = & \ f_{;abc} - f_{;acb} \\
= & \ f_{;c} R_{ab} - f_{;b} R_{ac}  + f ( R_{ab;c} -   R_{ac;b}).
\end{split}
\ee
In $3$-dimension,  the curvature tensor and the Ricci curvature  are related by
\be \label{eq-cur-ricci}
 R^{d}_{\ abc} =
\delta^d_b R_{ac} - \delta^d_c R_{ab} + g_{ac}R^d_b - g_{ab} R^d_c
+ \frac{1}{2} R ( \delta^d_c g_{ab} - \delta^d_b g_{ac} ) .
\ee
It follows from \eqref{eq-nabc}, \eqref{eq-cur-ricci} and the fact $ R=0$ that
\be \label{eq-ricif}
 2 ( f_{;b}R_{ac} - f_{;c} R_{ab} ) + g_{ac} f_{;d} R^d_b - g_{ab} f_{;d} R^d_c  =    f ( R_{ab;c} -   R_{ac;b}).
\ee
Take $ a = b \neq c$ and use the fact $ \{ e_1, e_2, e_3 \}$ diagonalizes $ \Ric$,  one has
\be \label{eq-nac}
 f_{;c} ( - 2 R_{aa}   -  R_{cc})   =    f ( R_{aa;c} -   R_{ac;a}).
\ee
Now (i)  follows from \eqref{eq-nac} and the fact $ R = 0 $.

(ii)  The assumption on $ \Ric$ implies that $ \Ric$ has distinct eigenvalues in an open set $U$.  Hence,
$ \nabla \log N  = \nabla \log V $  on $U$ by (i), which shows $V = c N $ for some constant $c$ on $U$.  Since $V$, $N$ are both harmonic functions,
$V = c N$ on $M$ by unique continuation.

(iii)  Apply (i) to $N$ and $ f = Z N$, one has
$$  ( R_{22} - R_{33} ) N Z_{;1}  =
 ( R_{33} - R_{11} ) N Z_{;2}  =
 ( R_{11} - R_{22} ) N Z_{;3}  = 0 . $$
The claim then follows from the fact $N \neq 0 $ and $ R_{11}= R_{22} \neq R_{33} .$
\end{proof}

The zero set of a static potential, if nonempty, was known to be a totally geodesic
hypersurface (cf. \cite[Proposition 2.6]{Corvino}).
In the next lemma, we give  more  geometric
 properties of  this  zero set.

\begin{lma}\label{lma-static-basic} Suppose $f$ is  a static potential with nonempty zero set. Let $\S=f^{-1}(0)$.
\begin{enumerate}
  \item [(i)] $\S$ is a totally geodesic hypersurface and
   $|\nabla f|$ is a positive constant on each connected component of $ \S$.
  \item [(ii)] At   any $p \in \S$, $\nabla f$ is an eigenvector of  $\Ric$.
\item [(iii)]  At any $ p \in \S$, let $\{ e_1,e_2, e_3 \}$ be an  orthonormal frame
that diagonalizes $ \Ric$ such that   $e_3$ is normal to $\S$.
Then
$R_{11} = R_{22} .$
\item[(iv)] Let $ K $ be the Gaussian curvature of $ \S$ at $ p$.
Using the same notations  in (iii),  one has
$ K  = 2 R_{11} = 2 R_{22}=-R_{33} .$
In particular, $K$ is  zero if and only if $ (M, g)$ is  flat at $p$.
\end{enumerate}

\end{lma}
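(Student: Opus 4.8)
The plan is to read off all four assertions from the static equation \eqref{eq-static-1}, its first covariant derivative, and the three-dimensional curvature identity \eqref{eq-cur-ricci}; concretely, from the identity \eqref{eq-ricif} evaluated along $\Sigma$. For \emph{(i)}, I would first show that $\nabla f$ is nowhere zero on $\Sigma$: if $\nabla f(p)=0$ for some $p\in\Sigma$, then along any unit-speed geodesic $\gamma$ issuing from $p$ the function $\phi(t)=f(\gamma(t))$ solves the linear ODE $\phi''=\big(\Ric(\gamma',\gamma')\big)\phi$ with $\phi(0)=\phi'(0)=0$, so $\phi\equiv 0$; thus $f$ vanishes on a neighborhood of $p$, and since $\Delta f=0$ and $M$ is connected, unique continuation gives $f\equiv 0$, a contradiction. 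Hence $\Sigma$ is a smooth embedded hypersurface with unit normal $\nu=\nabla f/|\nabla f|$. For $X,Y$ tangent to $\Sigma$, at points of $\Sigma$ one has $\nabla^2 f(X,Y)=-\langle\nabla_X Y,\nabla f\rangle=-|\nabla f|\langle\nabla_X Y,\nu\rangle$ (because $Yf\equiv 0$ on $\Sigma$), while \eqref{eq-static-1} gives $\nabla^2 f(X,Y)=f\,\Ric(X,Y)=0$ there; since $|\nabla f|>0$, the second fundamental form of $\Sigma$ vanishes, so $\Sigma$ is totally geodesic (this is also \cite[Proposition 2.6]{Corvino}). Finally, for $X$ tangent to $\Sigma$, $X|\nabla f|^2=2\nabla^2 f(X,\nabla f)=2f\,\Ric(X,\nabla f)=0$ on $\Sigma$, so $|\nabla f|^2$ is locally constant and hence a positive constant on each connected component of $\Sigma$.

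For \emph{(ii)}, \emph{(iii)} and the curvature relation in \emph{(iv)}, I would differentiate \eqref{eq-static-1} and combine it with the commutation formula \eqref{eq-nabc} and the three-dimensional identity \eqref{eq-cur-ricci} (with $R=0$), exactly as in the proof of Proposition \ref{prop-tod}, to obtain the tensor identity \eqref{eq-ricif}. Evaluating \eqref{eq-ricif} at a point $p\in\Sigma$, where $f=0$, in an orthonormal frame $\{e_1,e_2,e_3\}$ with $e_3=\nu$ and $e_1,e_2$ spanning $T_p\Sigma$, one has $f_{;1}=f_{;2}=0$ and $f_{;3}=|\nabla f|\neq 0$; dividing by $f_{;3}$ leaves the purely algebraic relation $2(\delta_{b3}R_{ac}-\delta_{c3}R_{ab})+\delta_{ac}R_{3b}-\delta_{ab}R_{3c}=0$. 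Taking $(a,b,c)$ equal to $(1,2,3)$, $(1,1,2)$ and $(2,2,1)$ gives $R_{12}=R_{13}=R_{23}=0$, so $\Ric$ is diagonal in this frame; in particular $\nu=e_3$ is an eigenvector of $\Ric$, which is \emph{(ii)} and makes the frame required in \emph{(iii)} legitimate. Taking $(a,b,c)=(1,1,3)$ and $(2,2,3)$ gives $R_{33}=-2R_{11}$ and $R_{33}=-2R_{22}$, hence $R_{11}=R_{22}$, proving \emph{(iii)}.

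For the remaining claims in \emph{(iv)}: since $\Sigma$ is totally geodesic, the Gauss equation identifies its Gaussian curvature $K$ at $p$ with the ambient sectional curvature of the plane $T_p\Sigma$, which in three dimensions equals $\tfrac12(R_{11}+R_{22}-R_{33})$; by the relations just obtained this is $\tfrac12\big(R_{11}+R_{11}+2R_{11}\big)=2R_{11}=2R_{22}=-R_{33}$. If $K=0$ then $R_{11}=R_{22}=R_{33}=0$, and together with $R_{12}=R_{13}=R_{23}=0$ this forces $\Ric(p)=0$; since in dimension three \eqref{eq-cur-ricci} expresses the full curvature tensor through $\Ric$ and $R$, we conclude $\Rm(p)=0$, i.e. $(M,g)$ is flat at $p$, and the converse is immediate.

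The heart of the matter—and the only place real care is needed—is the second step: one must keep the curvature sign conventions straight and must observe that \eqref{eq-ricif} is valid in an \emph{arbitrary} orthonormal frame (the specialization to an eigenframe of $\Ric$ used in Proposition \ref{prop-tod} is not yet available here), so that it may legitimately be applied in the frame adapted to $\Sigma$ before \emph{(ii)} is known. The remaining steps are routine once this identity is in hand.
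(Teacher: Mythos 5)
Your proof is correct and follows essentially the same route as the paper: part (i) via the ODE/unique-continuation argument plus the static equation, and parts (iii)--(iv) via the differentiated static equation combined with the $3$-dimensional curvature identity, which is exactly the identity \eqref{eq-ricif} underlying Proposition \ref{prop-tod}(i), together with the Gauss equation and $R=0$. The only variation is in (ii): the paper gets $\Ric(\nu,X)=0$ directly from the Codazzi equation once $\Sigma$ is known to be totally geodesic, whereas you extract it (along with $R_{12}=0$) from \eqref{eq-ricif} evaluated at $f=0$ in an adapted frame; both are valid, and your observation that \eqref{eq-ricif} is a tensorial identity usable in an arbitrary orthonormal frame (so no circularity with (ii)) is the right point to make, though the Codazzi route is slightly shorter.
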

\begin{proof} (i)
Let $ p \in \S$. If $ \nabla f(p)=0$, then along any geodesic $\gamma(t)$ emanating from $p$, $f(\gamma(t))$ satisfies $f''=\Ric(\gamma',\gamma')f$
and  $f(0)=f'(0)=0$.  This implies $f$ is zero near $p$. By unique continuation, $f = 0$ on $M$, thus a contradiction. Hence, $ \nabla f (p) \neq 0 $, which
implies  that  $\S$ is an embedded surface.
On $ \S$, the static equation shows   $ \nabla^2 f (X, Y) $ = 0  and $ \nabla^2 f (X, \nabla f ) = 0 $ for any tangent vectors  $X, Y$ tangential to $ \S$,
which readily  implies  that $ \S$ is totally geodesic and $ \nabla_X | \nabla f |^2 = 0 $.

(ii)  Since  $\S$ is totally geodesic,  it follows from the Codazzi equation that   $\Ric(\nu,X)=0$ for all $X$ tangent to $ \S$, where $\nu$ is the unit normal of $\S$.
Therefore, $ \nabla f = \frac{\p f}{\p \nu} \nu $ is an eigenvector of $ \Ric$.

(iii)  Apply  Proposition \ref{prop-tod} (i), one has
$$ ( R_{11} - R_{22} ) f_{;3}  = f ( R_{22;3} - R_{23;2} ) =  0  . $$
Since  $|f_{;3}|=|\nabla f|>0$, one concludes  $ R_{11} = R_{22}$.

(iv)   It follows from the  Gauss equation,  the fact $ R =0$ and (iii)  that $ K = - R_{33} = 2 R_{11} = 2 R_{22}$.
As a result, $ K = 0 \Leftrightarrow \Ric = 0 $ at $p$.

\end{proof}

In what follows, we  let   $ \K = \{ f \ | \ \nabla^2 f = f \Ric \}$.

\begin{lma} \label{lma-distinct}
If  the Ricci curvature of $g$  has distinct eigenvalues at a point, then $ \dim (\K) \le 1 $.
Here $ \dim (\K)$ denotes the dimension of $\K$. 
\end{lma}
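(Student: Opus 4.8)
The plan is to use Lemma~\ref{lma-static-basic}(iii), which says the zero set of a static potential stays away from points of distinct Ricci eigenvalues, together with the simple observation that pointwise evaluation is an injective linear functional on $\K$.

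First I would fix a point $p$ at which the three eigenvalues of $\Ric$ are mutually distinct; such a $p$ exists by hypothesis, and in fact distinctness persists on a neighborhood of $p$, although only the value at $p$ itself is needed. The key step is to show that any $f \in \K$ with $f(p) = 0$ must be identically zero. Suppose not, so $f \in \K$ is nontrivial with $f(p) = 0$. Then $p$ belongs to the nonempty zero set $\S = f^{-1}(0)$, and Lemma~\ref{lma-static-basic} applies: part (i) gives $\nabla f(p) \neq 0$ and exhibits $\S$ as a totally geodesic hypersurface near $p$; part (ii) shows the unit normal $\nu$ to $\S$ at $p$ is an eigenvector of $\Ric$, so $\Ric$ at $p$ is diagonalized by an orthonormal frame $\{e_1, e_2, e_3\}$ with $e_3 = \nu$; and then part (iii) yields $R_{11} = R_{22}$ at $p$, contradicting the choice of $p$. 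This establishes the key step.

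Finally I would note that the key step says precisely that the linear map $\K \to \R$ sending $f$ to $f(p)$ has trivial kernel; since an injective linear map into $\R$ can only be defined on a space of dimension at most one, $\dim(\K) \le 1$ follows at once. The whole argument rests on Lemma~\ref{lma-static-basic}, which is already available, so I do not expect a genuine obstacle here; the one thing to recognize is that part (iii) of that lemma directly forbids a static potential from vanishing where the Ricci eigenvalues are distinct, and everything else is formal. (An alternative route, avoiding part (iii): Proposition~\ref{prop-tod}(i) expresses $\nabla \log |f|$ near $p$ purely in terms of the curvature of $g$ whenever the Ricci eigenvalues are distinct, so any two static potentials have the same logarithmic derivative near $p$, hence are proportional on a neighborhood of $p$, and then on all of $M$ by unique continuation.)
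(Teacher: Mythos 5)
Your argument is correct, and its main line is genuinely different from the paper's. Both proofs hinge on the same key fact (Lemma~\ref{lma-static-basic}(iii): a static potential cannot vanish at a point where the Ricci eigenvalues are distinct), but you exploit it purely pointwise: the evaluation functional $f \mapsto f(p)$ on the linear space $\K$ has trivial kernel, hence $\dim(\K) \le 1$ by linear algebra alone. This avoids any appeal to Proposition~\ref{prop-tod}(ii) and to unique continuation. The paper instead passes to an open set $U$ containing $p$ on which the eigenvalues remain distinct, uses Lemma~\ref{lma-static-basic}(iii) to conclude that every static potential is of one sign on $U$, and then invokes Proposition~\ref{prop-tod}(ii) (Tod's uniqueness of positive static potentials under distinct eigenvalues, which rests on Proposition~\ref{prop-tod}(i) together with unique continuation for harmonic functions) to get proportionality on all of $M$. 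Your parenthetical alternative --- equating $\nabla \log |f|$ for two potentials near $p$ and extending by unique continuation --- is essentially the paper's route. The trade-off: your evaluation-map argument is shorter and more elementary, needing only the single point $p$; the paper's argument, by going through Proposition~\ref{prop-tod}(ii), fits into the toolkit it reuses elsewhere (e.g.\ in Proposition~\ref{prop-Static-dim}), but proves nothing more for this particular lemma. One small remark: the appeal to part (i) for $\nabla f(p) \neq 0$ is not actually needed for your contradiction; parts (ii) and (iii) suffice, since (ii) guarantees the normal direction is a Ricci eigenvector so that the frame required in (iii) exists.
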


\begin{proof}
The assumption on $ \Ric$  implies there is an open set $U$ such that
 $\Ric $ has distinct eigenvalues everywhere in $U$.
By  Lemma \ref{lma-static-basic} (iii),  a static potential $f$ is either positive or negative  in  $U$.
The claim now follows from  Proposition \ref{prop-tod} (ii).
 \end{proof}

Given two static potentials, if one of them is positive, one can look at their  quotient.

\begin{lma}\label{lma-static-quotient}
Suppose $ f$ and $N$ are two static potentials. Suppose $N$ is positive. Let  $Z=f/N$.
Then either $  Z  $ is a constant or  $\nabla Z$ never vanishes.
In the latter  case, one has
\begin{itemize}

\item[(i)]   each  level set of $Z$ is a  totally geodesic hypersurfaces.

\item[(ii)]  $ N^2 | \nabla Z |^2  $ equals a {constant}  on each connected component of the level set of $ Z$.

\item[(iii)]  $(M, g)$ is locally isometric to
$ \lf(  (-\epsilon, \epsilon) \times \Sigma, N^2 dt^2 + g_0 )  \ri) $
where $ \Sigma $ is a $2$-dimensional surface,  $ Z$ is a constant on each $ \Sigma_t = \{ t \} \times \Sigma$
and $ g_0 $ is a fixed metric on $ \Sigma$.
\end{itemize}

\end{lma}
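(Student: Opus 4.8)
The plan is to work with the quotient function $Z = f/N$, which is well-defined and smooth since $N > 0$, and to first derive the second-order equation it satisfies. Starting from $\nabla^2 f = f\,\Ric$ and $\nabla^2 N = N\,\Ric$, a direct computation gives $N\nabla^2 Z = -2\,\nabla N \odot \nabla Z$ (symmetrized product), equivalently
\be\label{eq-prop-Z-hess}
\nabla^2 Z = -\frac{1}{N}\big(\nabla N \otimes \nabla Z + \nabla Z \otimes \nabla N\big).
\ee
Since both $f$ and $N$ are harmonic, $Z$ satisfies $\Delta Z = -\tfrac{2}{N}\la \nabla N, \nabla Z\ra$, which is a second-order linear elliptic equation with no zeroth-order term; hence by unique continuation, if $\nabla Z$ vanishes on any open set then $Z$ is constant, and more to the point we can apply the strong unique continuation / Aronszajn-type argument to the vector field $\nabla Z$: from \eqref{eq-prop-Z-hess} the components of $\nabla Z$ satisfy a first-order ODE system along any curve, so if $\nabla Z(p) = 0$ at one point then $\nabla Z \equiv 0$ and $Z$ is constant. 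This gives the dichotomy. (Alternatively one notes $\nabla^2(\log$-type quantities$)$ but the cleanest route is the ODE/unique-continuation argument on $\nabla Z$ directly from \eqref{eq-prop-Z-hess}.)

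Assume now $\nabla Z$ never vanishes, so each level set $\Sigma_t = Z^{-1}(t)$ is a smooth embedded hypersurface. For (i), restrict \eqref{eq-prop-Z-hess} to vectors $X, Y$ tangent to $\Sigma_t$: then $\nabla Z(X) = \nabla Z(Y) = 0$, so $\nabla^2 Z(X,Y) = 0$. Since the second fundamental form of $\Sigma_t$ with respect to the unit normal $\nu = \nabla Z/|\nabla Z|$ is $\mathrm{II}(X,Y) = |\nabla Z|^{-1}\nabla^2 Z(X,Y)$, this shows $\mathrm{II} \equiv 0$, i.e. $\Sigma_t$ is totally geodesic. For (ii), compute $X\big(N^2|\nabla Z|^2\big)$ for $X$ tangent to $\Sigma_t$: this is $2N^2\la \nabla_X \nabla Z, \nabla Z\ra + 2N|\nabla Z|^2\, X(N)$. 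The first term is $2N^2 \nabla^2 Z(X, \nabla Z)$, which by \eqref{eq-prop-Z-hess} equals $2N^2 \cdot \big(-\tfrac{1}{N}\big)\big(X(N)|\nabla Z|^2 + \la\nabla Z, X\ra\, \la\nabla N,\nabla Z\ra\big) = -2N|\nabla Z|^2 X(N)$ because $\la \nabla Z, X\ra = 0$. Hence $X\big(N^2|\nabla Z|^2\big) = 0$, so $N^2|\nabla Z|^2$ is constant on each connected component of $\Sigma_t$.

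For (iii), use $Z$ (suitably reparametrized) as a coordinate transverse to the foliation $\{\Sigma_t\}$. Let $u$ be the signed distance function to a fixed leaf $\Sigma_{t_0}$; since the leaves are totally geodesic, the normal exponential map gives a local product splitting of the metric as $g = du^2 + g_u$ where $g_u$ is a metric on $\Sigma$ — but totally geodesic alone only kills the $u$-derivative of $g_u$ at $u=0$. The stronger statement we want is that $Z$ depends only on $u$ and that the leaf metric is genuinely independent of $u$. That $Z = Z(u)$ follows because $\nabla Z$ is normal to each leaf, hence parallel to $\partial_u$. That $g_u$ is $u$-independent: the shape operator of $\Sigma_u$ evolves by the Riccati equation $\partial_u S = -S^2 - \Ric(\nu,\cdot)^\top$, and $S \equiv 0$ by (i), so $\Ric(\nu, X) = 0$ for $X$ tangent — combined with $\partial_u g_u = 2 g_u(S\cdot,\cdot) = 0$, the leaf metric is constant in $u$; call it $g_0$. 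Finally, rewriting $du$ in terms of $dt = dZ$: since $|\nabla Z| = |\partial Z/\partial u|$ is, by (ii), a function of $u$ alone (constant on each leaf), we may change the transverse coordinate back to $t$ and absorb the conformal-type factor, obtaining $g = N^2\, dt^2 + g_0$ after noting $N^2 |\nabla Z|^2 = $ const reduces $N^2 (dZ)^2$ on the normal line to a multiple of $du^2$; a careful bookkeeping of these two functions of one variable yields precisely the stated normal form on $(-\epsilon,\epsilon)\times\Sigma$.

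The main obstacle I expect is step (iii): getting from "totally geodesic leaves" to the rigid warped-product form $N^2 dt^2 + g_0$ with a genuinely $t$-independent $g_0$ requires feeding the static equation (not just the consequence $S\equiv 0$) back into the Riccati evolution to control $\Ric(\nu,\cdot)$, and then reconciling the two natural transverse coordinates ($u$ = arc length and $t = Z$) using part (ii). The bookkeeping between these, and making sure $\epsilon$ can be chosen uniformly so the product chart is honest, is where the real care is needed; parts (i) and (ii) are short once \eqref{eq-prop-Z-hess} is in hand.
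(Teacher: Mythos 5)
Your key Hessian identity for $Z$ is exactly the paper's equation \eqref{eq-Z-elliptic}, and your treatment of the dichotomy and of parts (i) and (ii) is correct; indeed, propagating the vanishing of $\nabla Z$ along arbitrary curves via the linear ODE $\nabla_{\gamma'}\nabla Z = -\frac{1}{N}\big(\la \nabla N,\gamma'\ra \nabla Z + \la \nabla Z,\gamma'\ra\nabla N\big)$ is, if anything, cleaner than the paper's argument, which works along geodesics and then invokes unique continuation.

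The genuine gap is in (iii). You pass from the foliation by level sets of $Z$ to the foliation by hypersurfaces equidistant from a fixed leaf, asserting that $Z=Z(u)$ because ``$\nabla Z$ is normal to each leaf, hence parallel to $\p_u$.'' But $\p_u=\nabla u$ is normal to the level sets of the distance function $u$, not to the level sets of $Z$; the two families agree only along the base leaf. A totally geodesic foliation need not be equidistant: from \eqref{eq-Z-elliptic} one computes, for $\nu=\nabla Z/|\nabla Z|$ and $X$ tangent to a leaf, $\la\nabla_\nu\nu, X\ra = -X(N)/N$, so the integral curves of $\nu$ are geodesics only if $N$ is constant on the leaves, which is not assumed. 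A concrete counterexample to your intermediate claims: on flat $\R^3$ take $N=x_1+2$ and $f=x_2$ (both static potentials where $N>0$); the leaves of $Z=x_2/(x_1+2)$ are half-planes through a common line, hence not parallel, $Z$ is not a function of the distance to a fixed leaf, and the equidistant surfaces are not totally geodesic, so both ``$Z=Z(u)$'' and ``$S\equiv 0$ by (i)'' fail. Note also that if your argument did go through, it would force $N$ to depend on $t$ alone, a conclusion stronger than the lemma and false in this example. (A secondary slip: the Riccati equation for equidistant hypersurfaces involves the Jacobi operator $R(\cdot,\nu)\nu$, not $\Ric(\nu,\cdot)$.) The repair is to abandon the distance function and use the flow of $X=\nabla Z/|\nabla Z|^2$ as the transverse coordinate, as the paper does: in these coordinates $g=|\nabla Z|^{-2}\,dt^2+g_t$ with $Z$ constant on each slice; comparing the second fundamental form of the slices in $g$ with that in the background product $dt^2+g_t$, which differ by the factor $|\nabla Z|$, part (i) gives $\frac{d}{dt}g_t=0$, so $g_t=g_0$; then part (ii) gives $|\nabla Z|^{-2}=N^2/\phi(t)^2$ with $\phi(t)=N|\nabla Z|$ constant on each slice, and reparametrizing $t$ by $\int \phi(t)^{-1}\,dt$ yields $g=N^2\,dt^2+g_0$.
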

\begin{proof} Let $ \{ x_i \}$ be local coordinates on $M$.
Since $N$ and $f=NZ$ both are solutions to  \eqref{eq-static-1}, we have
\bee
\begin{split}
NZR_{ij}
=& \ (NZ)_{;ij}\\
=& \ NZR_{ij}+NZ_{;ij}+N_{;i}Z_{;j}+N_{;j}Z_{;i}.
\end{split}
\eee
Therefore,
$ NZ_{;ij}=-N_{;i}Z_{;j}-N_{;j}Z_{;i} $
or equivalently
\be  \label{eq-Z-elliptic}
N \nabla^2 Z (v, w) = - \la \nabla N, v \ra \la \nabla Z, w \ra - \la \nabla N, w \ra \la \nabla Z, v \ra
\ee
for any tangent vectors $v$, $w$.

 Suppose $\nabla Z=0$ at some point $p$. Similar to the proof of Lemma \ref{lma-static-basic} (i),
 we consider an arbitrary geodesic $\gamma(t)$ emanating from $p$.
Taking $ v = w = \gamma'$ in  \eqref{eq-Z-elliptic}, we have
 $  N  Z(\gamma(t) ) '' =  - 2 N (\gamma(t))' Z(\gamma(t))' $.
 As $N > 0 $ and $ Z(\gamma(t))' |_{t=0} = 0 $, we have  $ Z(\gamma(t))' = 0 $, $ \forall \ t $. Hence
$Z$ is a constant near $p$.  By unique continuation \cite{Aronszajn-1957}, $Z$ is  a constant on $M$.

Next, suppose $ \nabla Z \neq 0$ everywhere. In this case, every level set  $Z^{-1}(t)$, if nonempty,  is an embedded hypersurface.
Let $ v$ and $w$ be  tangent vectors tangent to $Z^{-1} (t)  $,
 \eqref{eq-Z-elliptic} implies
$
N \nabla^2 Z ( v, w)  = 0 .
$
As  $N > 0 $ and
$
\nabla^2 Z (v, w) = \la \nabla_v (\nabla Z) , w \ra  = | \nabla Z | \Pi (v, w),
$
where $ \Pi ( \cdot, \cdot) $ is the second fundamental form of $ Z^{-1}(t) $ with respect to
$ \nu = \nabla Z / | \nabla Z |$, we have $ \Pi = 0 $. Hence $ Z^{-1} (t) $ is totally geodesic, which proves (i).

To prove (ii), let  $ v = \nabla Z $ and $  w $ be  tangent to $ Z^{-1}(t)$ in  \eqref{eq-Z-elliptic}, we have
$    N w \lf( | \nabla Z |^2 \ri) = - 2 w ( {N} )  | \nabla Z |^2$,
which implies
$  w \lf( N^2 | \nabla Z |^2  \ri) = 0 . $
Hence $ N^2 | \nabla Z |^2  $ equals a constant   on each connected component of $  Z^{-1}(t)$.

For  (iii),  let  $ X =  \nabla Z / | \nabla Z|^2 $ which is a nowhere vanishing vector field.
Given any point $ p \in M$, let $ \Sigma$ be a connected  hypersurface passing $p$ on which $ Z$ is a constant.
By considering the integral curves of $X$ starting from $\Sigma$ and shrinking $ \Sigma$ if necessary,
one knows  there exists  an open  neighborhood $U$ of $p$,
diffeomorphic to $ (-\epsilon, \epsilon) \times \Sigma $ for some $\epsilon > 0$,
on which the metric $g$ takes the form
$$ g = \frac{1}{| \nabla Z |^2}  d t^2 + g_t  $$
where $ \p_t =  X $,  $Z$ is a constant on each $\Sigma_t =  \{ t \} \times \Sigma $
and  $ g_t $ is the induced metric on $ \Sigma_t$.
Consider a background metric
$$
\bar{g} = d t^2 + g_t
$$
on $ U =  (-\epsilon, \epsilon) \times  \Sigma $.
Let $\Pi, \overline{\Pi}$ be the second fundamental form of $\Sigma_t $ in $(U, g)$, $(U, \bar{g})$ respectively with respect to $\p_t$. Then
$ \Pi = | \nabla Z | \overline{\Pi} . $
Since  $ \Pi = 0 $ by (i),  we have $ \overline{\Pi} = 0 $.
Hence $  \frac{d}{dt} g_t = 0 $ by the fact  $  \overline{\Pi} = \frac12 \frac{d}{dt} g_t $.
This shows, for each $t$, $g_t  = g_0$ which is  a fixed metric on $\Sigma$.
By (ii), $  N | \nabla Z |  $ is a constant on  $ \Sigma_t $.
Let  $ \phi (t) =  N | \nabla Z |   $.
Then
\bee
g = \frac{N^2}{ \phi (t)^2} d t^2 + g_0 .
\eee
Replacing  $ t $ by $ \int \frac{1}{\phi(t)} d t $, we have
$ g = N^2 d t^2 + g_0 $. This proves (iii).
\end{proof}

\begin{prop}\label{prop-Static-dim}  If $(M, g)$ is not flat at a point, then $\dim(\K)\le 2$.
\end{prop}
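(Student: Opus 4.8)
\emph{Setup and reduction.} The plan is to localize near a point where $g$ is not flat, produce a positive static potential there, and combine Tod's identities (Proposition~\ref{prop-tod}) with the product structure of Lemma~\ref{lma-static-quotient}. If $\Ric$ has three distinct eigenvalues somewhere, Lemma~\ref{lma-distinct} already gives $\dim(\K)\le 1$, so assume this never happens. Fix $p_0$ where $g$ is not flat. In dimension three the curvature is determined by $\Ric$, so $\Ric(p_0)\neq 0$, and since $R=0$ the eigenvalues of $\Ric(p_0)$ are $\mu,\mu,-2\mu$ with $\mu\neq0$. As $\{\Ric\neq 0\}$ is open and the three-distinct case is excluded, on a small connected ball $V\ni p_0$ the Ricci tensor has at each point a double eigenvalue and a distinct simple one; equivalently $\Ric$ is diagonalized by an orthonormal frame with $R_{11}=R_{22}\neq R_{33}$. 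By Lemma~\ref{lma-static-basic}(i) a nontrivial static potential has a hypersurface zero set, hence cannot vanish on an open set, so by unique continuation restriction embeds $\K$ into the space $\K_V$ of static potentials on $V$; it thus suffices to show $\dim\K_V\le 2$. If $\K_V=\{0\}$ we are done; otherwise, shrinking $V$ away from the zero set of a chosen nonzero element of $\K_V$ (this keeps the properties above, and still $\K\hookrightarrow\K_V$), we may assume there is a positive $N\in\K_V$.

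\emph{Product structure.} For $f\in\K_V$ set $Z=f/N$; since $N>0$, $f\mapsto Z$ is a linear isomorphism of $\K_V$ onto a space $\mathcal Z$ of functions on $V$, so it suffices to prove $\dim\mathcal Z\le 2$. By Proposition~\ref{prop-tod}(iii), $Z_{;1}=Z_{;2}=0$ at every point, i.e. $\nabla Z$ lies in the simple eigenline $L=\R e_3$; and by Lemma~\ref{lma-static-quotient}, $Z$ is either constant or $\nabla Z$ vanishes nowhere. If every $Z\in\mathcal Z$ is constant then $\dim\mathcal Z\le 1$. Otherwise fix a non-constant $Z_1\in\mathcal Z$, so $\nabla Z_1\neq 0$ on $V$; Lemma~\ref{lma-static-quotient}(iii) then gives an isometry of a smaller $V$ with $\big((-\epsilon,\epsilon)\times\Sigma,\ N^2\,dt^2+g_0\big)$, where $g_0$ is a fixed metric on the surface $\Sigma$, $Z_1=Z_1(t)$, and $\partial_t\parallel\nabla Z_1$; hence $L=\R\nabla Z_1=\R\,\partial_t$ everywhere. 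Since $\nabla(f/N)\in L=\R\,\partial_t$ for every $f\in\K_V$ and $g$ has no $dt\,d\sigma$ cross terms, each $f/N$ must be a function of $t$ alone.

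\emph{An ODE and the conclusion.} For $f=Z(t)N\in\K_V$, substitute the coordinates $(t,\sigma^1,\sigma^2)$ — so $g_{tt}=N^2$, $g_{ta}=0$, and $g_{ab}=(g_0)_{ab}$ is $t$-independent — into the identity
\[
N\,\nabla^2Z(v,w)=-\la\nabla N,v\ra\la\nabla Z,w\ra-\la\nabla N,w\ra\la\nabla Z,v\ra
\]
from the proof of Lemma~\ref{lma-static-quotient}, with $v=w=\partial_t$: it reduces to $N Z''=-N_t Z'$, where $N_t=\partial_t N$. Applied to the non-constant $Z_1$ this shows $\partial_t\log N=-Z_1''/Z_1'$ depends only on $t$, whence $N=A(t)B(\sigma)$ for positive functions $A,B$. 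Reinserting this into $N Z''=-N_t Z'$ for an arbitrary $Z\in\mathcal Z$ gives $(A Z')'=0$, so $Z$ lies in the two-dimensional span of $1$ and an antiderivative of $1/A$. Therefore $\dim(\K)\le\dim\K_V=\dim\mathcal Z\le 2$.

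\emph{Main obstacle.} The delicate point is the passage to the product form together with the claim that every quotient $f/N$ is a function of $t$ alone: this couples Tod's identity, which confines $\nabla(f/N)$ to the simple eigendirection, with the totally geodesic splitting of Lemma~\ref{lma-static-quotient}, which identifies that direction with $\partial_t$, and it requires arranging the successive shrinkings of $V$ — away from flatness, preserving the Ricci eigenvalue pattern, away from the zero set of the chosen static potential, and into a product chart — to be carried out compatibly. Once the chart is fixed, only the one-line ODE computation above remains.
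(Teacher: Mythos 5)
Your argument is correct, but its endgame differs from the paper's. Both proofs start the same way: dispose of the case of three distinct Ricci eigenvalues via Lemma~\ref{lma-distinct}/Proposition~\ref{prop-tod}(ii), localize to a connected open set where $\Ric\neq 0$ has the eigenvalue pattern $R_{11}=R_{22}\neq R_{33}$ and where a chosen static potential is positive, and use Proposition~\ref{prop-tod}(iii) to confine the gradients of all quotients $f/N$ to the simple eigendirection. From there the paper argues by contradiction with \emph{three} linearly independent potentials $f_1,f_2,f_3$: since $\nabla(f_1/f_3)$ and $\nabla(f_2/f_3)$ lie in the same line at a single point $q$, some combination $\nabla(Z_1+\a Z_2)$ vanishes at $q$, and the dichotomy of Lemma~\ref{lma-static-quotient} (gradient vanishing at one point forces constancy) yields $f_1+\a f_2=\beta f_3$, contradicting independence; the product structure of Lemma~\ref{lma-static-quotient}(iii) is never needed. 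You instead bound the dimension directly: you invoke the product chart $N^2dt^2+g_0$, show every quotient is a function of $t$ alone, and reduce the static system to the ODE $NZ''=-N_tZ'$, whose separation ($N=A(t)B(\sigma)$, then $(AZ')'=0$) gives an explicit two-parameter description of $\K$ locally. Your route is longer and coordinate-based, and it requires the routine but necessary bookkeeping you flag (injectivity of restriction under each shrinking, connectedness of the slices $\{t\}\times\Sigma$ so that $\nabla Z\parallel\partial_t$ really forces $Z=Z(t)$, and $Z_1'\neq 0$ so the separation is legitimate); in exchange it yields more information, namely a local normal form $f=N\bigl(c\int A^{-1}dt+d\bigr)$ for all static potentials near a non-flat point, whereas the paper's pointwise linear-dependence trick is shorter and avoids coordinates entirely.
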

\begin{proof}
Suppose $\dim(\K)>2$. Let $f_1,f_2,f_3$ be three linearly independent static potentials.
 Let $ U  $ be an open set such that  $g $ is not flat at every point in $U$.  By Lemma \ref{lma-static-basic},  $U\setminus\cup_{i=1}^3f_i^{-1}(0)$ is nonempty. Hence
one can find a connected open set $V \subset U$ such that each $f_i$ is nowhere vanishing on $V$. Let $\{ \lambda_1,  \lambda_2,  \lambda_3 \}$ denote the eigenvalues of $\Ric$ in $V$.   $ \{ \lambda_1, \lambda_2, \lambda_3 \}$ can not be distinct by Proposition \ref{prop-tod}  (ii).
The fact $ g $ is not flat  and $ R  = 0$ shows $ \{ \lambda_1, \lambda_2, \lambda_3 \}$ can not be identical.
Therefore,
one may assume  $\lambda_1=\lambda_2 \neq \lambda_3$ in $V$.
Let $Z_1=f_1/f_3$, $Z_2=f_2/f_3$.
By Proposition \ref{prop-tod} (iii), both $\nabla Z_1$ and $\nabla Z_2$ are parallel to the
eigenvector of $ \Ric$   with eigenvalue $\lambda_3$.
Therefore,  at a point $q\in V$, $\nabla Z_1+\a\nabla Z_2=0$ for some constant $ \alpha$.  By Lemma \ref{lma-static-quotient}, $\nabla Z_1+\a\nabla Z_2\equiv0$ in $V$. So $Z_1+\a Z_2$ is a constant in $V$. Hence, $f_1+\a f_2=\beta f_3$ for some constant $\beta$, which is a contradiction.
\end{proof}

When the zero set of a given static potential is not empty, we can
 consider the behavior of another static potential along such a set.

\begin{lma} \label{lma-eq-on-zero}
Suppose $ f $ and $  \tf $ are two static potentials.
Suppose $\tf $ has nonempty zero set.
Let $ \Sigma = \tf^{-1}(0)$.
Then
\be \label{eq-f-on-zeroset}
 \nabla^2_\Sigma f  = \frac12 K f  \gamma
 \ee
along $ \Sigma$.  Here $ \nabla^2_\Sigma $ is the Hessian on $\Sigma$, $ \gamma $ is the induced metric on $ \Sigma$, and
 $K$  is the Gaussian curvature of $(\Sigma, \gamma)$.
 Consequently, $ K f^3$ equals a constant along each connected component of $ \Sigma$.

\end{lma}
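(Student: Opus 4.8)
The plan is to obtain \eqref{eq-f-on-zeroset} by restricting the static equation for $f$ to the totally geodesic hypersurface $\Sigma$, and then to deduce the scalar identity for $K f^{3}$ by a short integrability computation intrinsic to $\Sigma$.

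First, by Lemma \ref{lma-static-basic} (i), $\Sigma = \tf^{-1}(0)$ is an embedded, totally geodesic hypersurface in $(M,g)$, and $\nu = \nabla\tf/|\nabla\tf|$ is a well-defined unit normal along $\Sigma$. For tangent vectors $X,Y$ to $\Sigma$, the ambient Hessian and the intrinsic Hessian of $f$ differ only by a term proportional to the second fundamental form $\Pi$ of $\Sigma$ (namely $\nabla^{2}f(X,Y) = \nabla^{2}_{\Sigma}f(X,Y) - \Pi(X,Y)\,\partial_{\nu}f$); since $\Pi = 0$, this gives $\nabla^{2}f(X,Y) = \nabla^{2}_{\Sigma}f(X,Y)$. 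Feeding in the static equation $\nabla^{2}f = f\Ric$ yields $\nabla^{2}_{\Sigma}f(X,Y) = f\,\Ric(X,Y)$ along $\Sigma$.

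Next I would compute $\Ric$ restricted to $T\Sigma$. By Lemma \ref{lma-static-basic} (ii), $\nu$ is an eigenvector of $\Ric$, so $\Ric$ preserves $T_{p}\Sigma = \nu^{\perp}$. Choosing at $p\in\Sigma$ an orthonormal frame $\{e_{1},e_{2},e_{3}\}$ diagonalizing $\Ric$ with $e_{3}=\nu$, Lemma \ref{lma-static-basic} (iii) gives $R_{11}=R_{22}$, so $\Ric|_{T_{p}\Sigma}$ is a scalar multiple of $\gamma$, and by Lemma \ref{lma-static-basic} (iv) this multiple equals $\tfrac12 K$, where $K$ is the Gaussian curvature of $(\Sigma,\gamma)$. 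Hence $\Ric(X,Y)=\tfrac12 K\,\gamma(X,Y)$ for $X,Y$ tangent to $\Sigma$, and the previous step gives $\nabla^{2}_{\Sigma}f=\tfrac12 K f\,\gamma$, which is \eqref{eq-f-on-zeroset}.

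Finally, for the consequence I would argue intrinsically on the surface $\Sigma$. Tracing \eqref{eq-f-on-zeroset} gives $\Delta_{\Sigma}f = K f$. Taking the divergence of \eqref{eq-f-on-zeroset} directly gives $\operatorname{div}_{\Sigma}(\nabla^{2}_{\Sigma}f) = \tfrac12\,\nabla_{\Sigma}(K f)$; on the other hand, the standard identity $\operatorname{div}_{\Sigma}(\nabla^{2}_{\Sigma}f) = \nabla_{\Sigma}(\Delta_{\Sigma}f) + \Ric_{\Sigma}(\nabla_{\Sigma}f)$, combined with $\Ric_{\Sigma}=K\gamma$ and $\Delta_{\Sigma}f=K f$, gives $\operatorname{div}_{\Sigma}(\nabla^{2}_{\Sigma}f) = \nabla_{\Sigma}(K f) + K\nabla_{\Sigma}f$. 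Comparing the two expressions yields $\nabla_{\Sigma}(K f) = -2K\nabla_{\Sigma}f$, i.e.\ $f\nabla_{\Sigma}K + 3K\nabla_{\Sigma}f = 0$ on $\Sigma$, which after multiplying by $f^{2}$ is precisely $\nabla_{\Sigma}(K f^{3}) = 0$. Hence $K f^{3}$ is constant on each connected component of $\Sigma$. The only step requiring genuine care is this last commutation/divergence computation on the surface; everything else is immediate from Lemma \ref{lma-static-basic}, so I do not anticipate a real obstacle.
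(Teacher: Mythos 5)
Your proposal is correct and follows essentially the same route as the paper: restrict the static equation to the totally geodesic surface $\Sigma$, use Lemma \ref{lma-static-basic} (iii)--(iv) to identify $\Ric|_{T\Sigma}=\tfrac12 K\gamma$, and then combine the trace $\Delta_\Sigma f = Kf$ with the contracted-Bianchi/divergence identity on $\Sigma$ to get $f\nabla_\Sigma K + 3K\nabla_\Sigma f=0$, hence $\nabla_\Sigma(Kf^3)=0$. No gaps.
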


\begin{proof}
By   Lemma \ref{lma-static-basic} (iii),
$ \Ric (X, Y) = \l  \gamma (X, Y)  $, $ \forall $
  $X$, $Y$ tangent to $ \Sigma$, where $ 2 \l +  \Ric(\nu, \nu) = 0$ and $ \nu $ is a unit normal to $ \Sigma$.
   Therefore,
 $ \nabla^2 f (X, Y) = f \l \gamma (X, Y) $ along  $ \Sigma$.
On the other hand,   $  \nabla^2 f (X, Y) =  \nabla^2_\Sigma  f (X, Y)$
since  $ \Sigma$ is totally geodesic.
Hence
$  \nabla^2_\Sigma  f  = f \l \gamma = \frac12 f K \gamma , $
where we have used $ K = 2 \l $ by  Lemma \ref{lma-static-basic} (iv).

 Let $\{ x_\alpha \}$ be local coordinates on $ \Sigma$.
 Taking divergence and trace of  \eqref{eq-f-on-zeroset},
 we have
\be \label{eq-div-and-trace}
(\Delta_\Sigma f )_{; \alpha} + K f_{; \alpha} = \frac12 ( K f  )_{;\alpha}  \ \ \mathrm{and} \ \ \Delta_\Sigma f  = K f
\ee
where $ \Delta_\Sigma$ is the Laplacian on $ (\Sigma, \gamma)$.
It follows from \eqref{eq-div-and-trace} that
\bee
K_{;\alpha} f + 3 K f _{; \alpha} = 0 ,
\eee
which implies
 $  ( K f^3)_{; \alpha} = 0 $.  Hence, $ K f^3 $ is a constant on each connected component of $ \Sigma$.
\end{proof}

To prove the main result in this section, we need an additional lemma in connection with Lemma \ref{lma-static-quotient} (iii).

\begin{lma} \label{lma-zero-curvature}
Suppose $(\Sigma_0 , g_0)$ is  a flat surface.
If $ \dim (\K)  \ge 2 $ on
$$(M, g) = ( ( - \epsilon, \epsilon) \times \Sigma, N^2 dt^2 + g_0 ) $$
where $N$ is a positive function on $M $  and $g$ has zero scalar curvature, then $(M, g)$ is flat.
\end{lma}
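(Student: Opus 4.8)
The plan is to prove the contrapositive: if $(M,g)$ is not flat at some point, then $\dim(\K)\le 1$, which contradicts $\dim(\K)\ge 2$. Since $g$ has zero scalar curvature and $\dim M=3$, identity \eqref{eq-cur-ricci} expresses the full curvature tensor algebraically in terms of $\Ric$, so ``$g$ is not flat at $p$'' is equivalent to ``$\Ric(p)\neq 0$''. Hence it is enough to show that at every point where $\Ric\neq 0$ the Ricci tensor has three \emph{distinct} eigenvalues, and then invoke Lemma \ref{lma-distinct}.

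The geometric input I would extract is the structure of $\Ric$ for the metric $g=N^2\,dt^2+g_0$. Each slice $\Sigma_t=\{t\}\times\Sigma$ carries induced metric $g_0$, hence is flat, and it is totally geodesic in $(M,g)$ because $g_0$ is independent of $t$; let $\nu=N^{-1}\partial_t$ be the unit normal. From the Codazzi equation for the totally geodesic $\Sigma_t$ one obtains $\Ric(\nu,X)=0$ for every $X$ tangent to $\Sigma_t$, and from the Gauss equation together with the flatness of $g_0$ one obtains that the ambient sectional curvature of the $2$-plane $T_p\Sigma_t$ vanishes; comparing the relevant sums of sectional curvatures then gives $\Ric(\nu,\nu)=\tr_{g_0}(\Ric|_{T\Sigma_t})$. (Equivalently, a direct Christoffel-symbol computation in coordinates $(t,x^1,x^2)$ with $g_0=\delta_{ij}dx^idx^j$ yields $\Ric(\nu,\nu)=-\Delta_0 N/N$, $\Ric(\nu,\partial_i)=0$, and $\Ric(\partial_i,\partial_j)=-N^{-1}\partial_i\partial_j N$, where $\Delta_0$ is the flat Laplacian on $\Sigma$.)

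Next I would feed in the hypothesis $R=0$. Since $R=\Ric(\nu,\nu)+\tr_{g_0}(\Ric|_{T\Sigma_t})$ and the two summands are equal, both vanish. Therefore $\nu$ is an eigenvector of $\Ric$ with eigenvalue $0$, and $\Ric|_{T\Sigma_t}$ is a \emph{trace-free} symmetric bilinear form on the $2$-plane $T_p\Sigma_t$, hence has eigenvalues $\mu$ and $-\mu$ for some $\mu=\mu(p)\ge 0$. Thus the eigenvalues of $\Ric$ at every point are $\{0,\mu,-\mu\}$, and $\mu(p)=0$ precisely when $\Ric(p)=0$, i.e.\ precisely when $g$ is flat at $p$.

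Finally, at any point $p$ where $g$ is not flat we have $\mu(p)>0$, so $0$, $\mu(p)$, $-\mu(p)$ are three distinct eigenvalues of $\Ric$; Lemma \ref{lma-distinct} then forces $\dim(\K)\le 1$, contradicting the hypothesis, and hence $(M,g)$ is flat everywhere. The only step demanding genuine care is the Ricci computation of the second paragraph — in particular checking that the mixed component $\Ric(\nu,\partial_i)$ vanishes and that $\Ric|_{T\Sigma_t}$ depends only on the Hessian of $N$ along the slice (with respect to the flat metric $g_0$) even when $N$ varies in $t$; this is routine, and the Gauss--Codazzi route for the totally geodesic flat slices $\Sigma_t$ is the shortest way to it.
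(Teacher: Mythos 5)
Your proof is correct and takes essentially the same route as the paper: both use that the slices $\Sigma_t$ are flat and totally geodesic to conclude (via Gauss--Codazzi and $R=0$) that the Ricci eigenvalues at each point are $0,\mu,-\mu$ with $\nu$ in the kernel — the paper phrases this as $R_{33}=0$ and $R_{11}+R_{22}=0$ — and then invoke Lemma \ref{lma-distinct} to force $\mu=0$ under the hypothesis $\dim(\K)\ge 2$. The explicit Christoffel-symbol computation you sketch is a fine extra verification but is not needed beyond the Gauss--Codazzi argument, which is exactly what the paper uses.
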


\begin{proof}
Take any $ (t, q) \in  (- \epsilon, \epsilon ) \times \Sigma $, the surface $ \Sigma_t = \{ t \} \times \Sigma$ has zero Gaussian curvature and
is totally geodesic in $(M, g)$.
 Let $ \{ e_1, e_2, e_3 \}$ be an orthonormal frame at $(t, q)$
which diagonalizes the Ricci curvature and satisfies $ e_3 \perp \Sigma_t$. Then $ R_{33} = 0 $ by the Gaussian equation.
Hence, $ R_{11} + R_{22} = 0 $. If $ R_{11} \neq R_{22}$, then $\Ric$ has distinct eigenvalues at $(t,q)$ and Lemma \ref{lma-distinct} implies $ \dim (\K) \le 1 $, contradicting to the assumption $ \dim (\K) \ge 2 $.
Therefore $ R_{11} = R_{22} = 0 $ by Lemma \ref{lma-static-quotient} (iii).
We conclude that $ g$ has zero curvature at $(t, q)$.
\end{proof}

\begin{prop}\label{prop-two-zeros}
Suppose  $ \dim (\K) \ge 2$. Let
 $f_1$ and $ f_2$ be two linearly independent static potentials.
Let $ P_1$, $ P_2$ be a connected component of
$f_1^{-1}(0)$, $  f_2^{-1}(0) $ respectively. If $P_1 \cap P_2 \neq \emptyset$, then
\begin{itemize}
\item[(i)]  $(M, g)$ is flat along $P_1 \cup P_2$.
\item[(ii)] $(M, g)$ is flat in an open set which contains $P_1 \setminus f^{-1}_2 (0) $ and $ P_2 \setminus f_1^{-1} (0)$.
\end{itemize}

\end{prop}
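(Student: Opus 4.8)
The plan is to work at a point $p \in P_1 \cap P_2$ and use Lemma~\ref{lma-eq-on-zero} to force the Gaussian curvature of the zero sets to vanish, then invoke Lemma~\ref{lma-static-basic}~(iv) to conclude flatness there, and finally propagate flatness along $P_1 \cup P_2$ by a connectedness argument. For part~(ii) the idea is to fit the picture into the product structure of Lemma~\ref{lma-static-quotient}~(iii) near each zero set and apply Lemma~\ref{lma-zero-curvature}.

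First I would apply Lemma~\ref{lma-eq-on-zero} with the roles of the two potentials reversed. Taking $\tf = f_2$ and $f = f_1$ shows that along $P_2$ one has $\nabla^2_{P_2} f_1 = \frac12 K_2 f_1 \gamma_2$, where $K_2$ is the Gaussian curvature of $P_2$; in particular $K_2 f_1^3$ is locally constant on $P_2$. But $f_1$ vanishes at $p \in P_1 \cap P_2$, so this constant is zero, hence $K_2 f_1^3 \equiv 0$ on $P_2$. Since $f_1$ cannot vanish on a relatively open subset of $P_2$ without vanishing identically (unique continuation, as in Lemma~\ref{lma-static-basic}~(i)) — or more simply, since the zero set $f_1^{-1}(0) \cap P_2$ is a closed hypersurface-or-lower-dimensional set in $P_2$ — the curvature $K_2$ must vanish on the dense open set $P_2 \setminus f_1^{-1}(0)$, and thus $K_2 \equiv 0$ on all of $P_2$ by continuity. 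Symmetrically $K_1 \equiv 0$ on $P_1$. By Lemma~\ref{lma-static-basic}~(iv), $(M,g)$ is flat at every point of $P_1$ and every point of $P_2$, which is exactly~(i) (after noting $P_1 \cup P_2$ is where we have checked flatness pointwise).

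For part~(ii), I would take a point $q \in P_1 \setminus f_2^{-1}(0)$. Since $f_2(q) \neq 0$, by continuity $f_2$ has a sign, say positive, on a neighborhood $W$ of $q$; then on $W$ we may form the quotient $Z = f_1/f_2$ and apply Lemma~\ref{lma-static-quotient}. The function $Z$ is nonconstant (as $f_1, f_2$ are independent and $Z$ vanishes along $P_1 \cap W$ but not identically), so $\nabla Z$ never vanishes on $W$ and, shrinking $W$, Lemma~\ref{lma-static-quotient}~(iii) gives an isometry of a neighborhood of $q$ with $((-\epsilon,\epsilon) \times \Sigma, \, f_2^2\, dt^2 + g_0)$ in which $\{Z = Z(q)\} = \{0\} \times \Sigma$ corresponds to (an open piece of) $P_1$. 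The slice $\Sigma$ is totally geodesic with Gaussian curvature equal to $K_1 = 0$ at $q$ by~(i); shrinking further so that $\Sigma$ is flat, and since $\dim(\K) \ge 2$ restricts to this neighborhood, Lemma~\ref{lma-zero-curvature} applies and yields that $(M,g)$ is flat on this whole neighborhood of $q$. Running this over all $q \in P_1 \setminus f_2^{-1}(0)$ and, symmetrically, over all $q \in P_2 \setminus f_1^{-1}(0)$, produces the desired open flat set containing both.

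The main obstacle I anticipate is the bookkeeping in part~(ii): one must check that the flat product neighborhood supplied by Lemma~\ref{lma-static-quotient}~(iii) genuinely has the flat $2$-dimensional factor required by Lemma~\ref{lma-zero-curvature} — i.e.\ that the slice through $q$ has vanishing Gaussian curvature — which is where part~(i) is essential, and that the restriction of $\K$ to the small neighborhood still has dimension at least $2$ (the two independent potentials $f_1, f_2$ restrict to independent solutions locally, so this is fine). One should also be slightly careful that $P_1 \setminus f_2^{-1}(0)$ is dense in $P_1$ so that the resulting open set, together with the analogous one for $P_2$, indeed covers a neighborhood of $P_1 \cup P_2$ up to the lower-dimensional intersection set; but the statement as phrased only asks for an open set containing $P_1 \setminus f_2^{-1}(0)$ and $P_2 \setminus f_1^{-1}(0)$, so this suffices.
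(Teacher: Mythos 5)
Your argument is, in substance, the paper's own proof: part (i) via Lemma \ref{lma-eq-on-zero} and the vanishing of the constants $K_1f_2^3$, $K_2f_1^3$ at a point of $P_1\cap P_2$, then Lemma \ref{lma-static-basic} (iv); part (ii) via the quotient $Z=f_1/f_2$, the local product form $g=f_2^2\,dt^2+g_0$ from Lemma \ref{lma-static-quotient} (iii), and Lemma \ref{lma-zero-curvature}. Two details need tightening. First, in (i) you must justify that $P_2\setminus f_1^{-1}(0)$ is dense in $P_2$ (and symmetrically), and your parenthetical appeal to unique continuation does not do this as stated: if $f_1$ vanished on a relatively open subset of the surface $P_2$, its normal derivative there could a priori be nonzero, so you are not directly in the situation of Lemma \ref{lma-static-basic} (i). The correct observation, which is how the paper opens its proof, is that at any point of $f_1^{-1}(0)\cap f_2^{-1}(0)$ the gradients $\nabla f_1$ and $\nabla f_2$ are linearly independent --- otherwise a nontrivial combination $af_1+bf_2$ would vanish to first order at that point and hence vanish identically by the geodesic ODE/unique continuation argument, contradicting linear independence --- so the common zero set is an embedded curve; in particular it has empty interior in $P_2$, which gives the density and then $K_2\equiv 0$ by continuity, exactly as you conclude. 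Second, in (ii) the phrase ``shrinking further so that $\Sigma$ is flat'' is not the right mechanism (flatness at $q$ does not propagate by shrinking); what makes Lemma \ref{lma-zero-curvature} applicable is that the slice lies inside $P_1$ and part (i) gives $K_1\equiv 0$ on all of $P_1$, a point you do state correctly in your closing paragraph. With these repairs the proposal coincides with the paper's argument.
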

\begin{proof}
First we note that $ f_1^{-1}(0 ) \cap f_2^{-1} (0) $ is an embedded curve  (hence a geodesic  since both
$P_1$ and $P_2$ are totally geodesic).  This is because $ f_1 $ and $ f_2 $ are linearly independent,  which implies
 $\nabla f_1$ and $\nabla f_2$ are   linearly independent at any point in $ f_1^{-1}(0 ) \cap f_2^{-1} (0) $.

Now let  $ K_1 $, $ K_2$ be the Gaussian curvature of $ P_1 $, $P_2$ respectively.
By  Lemma \ref{lma-eq-on-zero},
$ K_1 f_2^3 = C$  for some constant $C$ on $P_1$ and
$ K_2 f_1^3 = D$ for some constant $D$ on $P_2$.
Since $ f_1 = f_2 = 0 $ on $P_1 \cap P_2$, we have $ C = D = 0 $.
As $P_1 \cap f_2^{-1}(0)$, $P_2 \cap f_1^{-1}(0)$ consists of embedded curves, we conclude $ K_1 = 0 $ on $P_1$
and $K_2 = 0 $ on $P_2$. Consequently $g$ is flat along $P_1 \cup P_2$ by Lemma \ref{lma-static-basic} (iv). This proves (i).

To prove (ii), let  $ p $ be an arbitrary point in $  P_1 \setminus f_2^{-1} (0)$, then  $ f_2  $ does not vanish in an open set $U$ containing $p$.
Consider $ Z = f_1 / f_2 $ on $U$. We have $Z = 0 $ on $P_1 \cap U$.
By Lemma \ref{lma-static-quotient} (iii),  there exists an open
neighborhood $W $ of $p$, diffeomorphic to $ (-\epsilon, \epsilon) \times \Sigma$, where $ \Sigma $ is a small piece of $P_1$ containing $p$,
and
$Z$ is a constant on each $ \{ t \} \times \Sigma$, such that on $W$
the metric $g$  takes  the form of
$$ g = f_2^2 dt^2 + g_0 $$
where $g_0$ is the induced metric on $ \Sigma$.
By  (i), $(\Sigma, g_0)$ has zero Gaussian curvature. Since $\dim (\K) \ge 2$ on $(W, g)$,  Lemma \ref{lma-zero-curvature} implies  that $ g$ is flat in $W$.
Similarly, we know $g$ is flat in an open neighborhood of  any point in $ P_2 \setminus f_1^{-1}(0)$.  Therefore, (ii) is proved.
\end{proof}

To end this section, we apply the analyticity of a static  metric  to improve Proposition \ref{prop-two-zeros}.
It is known that, if $ (M, g)$ admits a static potential $f$, then
$g$ is analytic in harmonic coordinates around any point $p$ with $f(p) \neq 0$ (cf. \cite[Proposition 2.8]{Corvino}).

\begin{thm} \label{thm-local-flat}
Suppose  $ \dim (\K) \ge 2$. Let
 $f_1$ and $ f_2$ be two linearly independent static potentials.
 If $ f_1^{-1} (0) \cap f_2^{-1} (0) $ is nonempty, then
 $(M,g)$ is flat.
\end{thm}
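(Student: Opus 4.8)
The plan is to bootstrap from Proposition \ref{prop-two-zeros}, which already gives that $(M,g)$ is flat along $P_1 \cup P_2$ and, more usefully, flat on an \emph{open} set containing $P_1 \setminus f_2^{-1}(0)$ and $P_2 \setminus f_1^{-1}(0)$, where $P_1, P_2$ are connected components of $f_1^{-1}(0)$, $f_2^{-1}(0)$ through a common point $p_0$. So we already have a nonempty open set $\Omega \subset M$ on which $g$ is flat; the task is to propagate flatness to all of $M$. The natural tool is analyticity: by \cite[Proposition 2.8]{Corvino}, $g$ is analytic in harmonic coordinates near any point where a static potential is nonzero. Since $M$ is connected, once I set up the analyticity correctly, the vanishing of the curvature tensor on the open set $\Omega$ forces it to vanish on a connected analytic neighborhood, and then a connectedness/propagation argument should close the gap.

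The first step is to fix the analytic structure. Around any point $q$ with $f_1(q) \neq 0$ (or $f_2(q)\neq 0$), harmonic coordinates give an analytic atlas, and $g$ together with the static potentials are analytic in these charts. The curvature tensor $\Rm$ is then analytic. Now $\Omega$ (from Proposition \ref{prop-two-zeros}(ii)) is an open subset of $M \setminus f_1^{-1}(0)$ near points of $P_1$ and of $M \setminus f_2^{-1}(0)$ near points of $P_2$, and on $\Omega$ we have $\Rm \equiv 0$. Let $M' = M \setminus (f_1^{-1}(0) \cap f_2^{-1}(0))$; on $M'$ at least one of $f_1, f_2$ is nonzero at each point, so $M'$ carries a global analytic structure in which $g$ is analytic, and $M'$ is connected (removing a curve from a connected $3$-manifold leaves it connected — this needs a brief remark). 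By unique continuation for analytic functions on the connected analytic manifold $M'$, $\Rm \equiv 0$ on $M'$. Since $f_1^{-1}(0)\cap f_2^{-1}(0)$ is a closed set with empty interior and $\Rm$ is continuous, $\Rm \equiv 0$ on all of $M$, i.e. $(M,g)$ is flat.

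The step I expect to be the main obstacle is making the analyticity argument fully rigorous: specifically, verifying that $g$ is analytic on \emph{all} of $M'$ in a single coherent analytic atlas (harmonic coordinates based at different points where different potentials are nonzero must be analytically compatible where they overlap — this is standard but should be stated), and confirming that $M'$ is connected so that the identity theorem for real-analytic functions applies globally rather than just componentwise. A secondary technical point: one should make sure Proposition \ref{prop-two-zeros}(ii) indeed delivers an open set that meets $M\setminus f_1^{-1}(0)$ in a set with nonempty interior relative to $M'$, so that the zero set of $\Rm$ on $M'$ has nonempty interior; this is immediate since $\Omega$ is open in $M$ and $\Omega \not\subset f_1^{-1}(0) \cup f_2^{-1}(0)$ (e.g. points near $P_1$ but off $f_2^{-1}(0)$). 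Once these points are dispatched, the conclusion is a one-line appeal to analytic unique continuation plus continuity of curvature.
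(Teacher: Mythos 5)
Your argument is correct and follows essentially the same route as the paper: flatness on an open set via Proposition \ref{prop-two-zeros}(ii), analyticity of $g$ (in harmonic coordinates) at every point of $M\setminus\bigl(f_1^{-1}(0)\cap f_2^{-1}(0)\bigr)$, connectedness of that complement because the common zero set is an embedded curve, and then propagation of $\Rm\equiv 0$ by analytic unique continuation plus continuity across the curve. The extra care you flag about the compatibility of the analytic atlas is a reasonable point to verify, but it is the standard fact underlying the paper's (terser) statement and does not change the proof.
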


\begin{proof}
Let $ S =   f_1^{-1} (0) \cap f_2^{-1} (0)$.
Given  any  $ p \in M \setminus S$, either $f_1 (p) \neq 0 $ or $ f_2 (p) \neq 0 $, hence there exists an open set containing $p$ in which $g$ is analytic.
As  $ f_1 $ and $ f_2 $ are linearly independent, $ S$ is an embedded curve. In particular $ M \setminus S $ is path-connected. Therefore,
by Proposition \ref{prop-two-zeros} (ii),  we conclude that $g$ is flat  in $M \setminus S$, hence flat in $M$.
\end{proof}

\begin{remark}
We note that a much stronger analytic property of static metrics was shown
by Chru\'{s}ciel in \cite[Section 4]{Chrusciel-analytic}.
Theorem \ref{thm-local-flat} also follows  from  Proposition \ref{prop-two-zeros} and
the result of Chru\'{s}ciel in \cite{Chrusciel-analytic}.
\end{remark}

\section{static potentials on  an asymptotically flat end} \label{section-AF}

In this section, unless otherwise stated,
we assume that $M$ is diffeomorphic  to $ \R^3 \setminus B(\rho)$,
where $B(\rho)$ is an open  Euclidean ball centered at the origin with radius $\rho>0$,
and $g$ is a smooth metric on $ M$ such that
with respect to the standard coordinates $\{x_i \}$ on $ \R^3$,
$g$ satisfies
\be\label{eq-AF-def}
g_{ij}=\delta_{ij}+b_{ij} \  \ \mathrm{with} \ \
b_{ij} = O_2 (|x|^{-\tau})
\ee
for some constant  $\tau \in ( \frac12 , 1]$.  We also assume  that $ g $ has zero scalar curvature.

On such an $(M, g)$, a static potential $f$ is necessarily 
smooth up  to  $\p M$ by   \eqref{eq-static-c} and the assumption  that 
$g$ is smooth up  to $ \p M$  (cf. \cite[Proposition 2.5]{Corvino}).
The following lemma shows that at infinity $f$ has  at most  linear growth.

\begin{lma}\label{lma-linear-growth}
Suppose $f$ is a static potential on $(M, g)$. Then $f$ has at most linear growth,
i.e. there exists $ C > 0 $ such that $ | f (x) | \le C | x | $.
\end{lma}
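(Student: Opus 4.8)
The plan is to exploit the two equations a static potential satisfies on a zero-scalar-curvature asymptotically flat end, namely $\Delta f = 0$ and $\nabla^2 f = f\Ric$, together with the decay $\Ric = O(|x|^{-\tau-2})$ implied by \eqref{eq-AF-def}. First I would set up the problem on the annular regions $A_r = \{ r \le |x| \le 2r \}$ for large $r$, where the metric $g$ and the Euclidean metric are uniformly comparable and all geometric quantities (Christoffel symbols, curvature) have the stated power-law bounds. The key analytic input is an interior (or boundary, near $\partial M$, though only the behavior at infinity matters) gradient estimate for harmonic functions: since $\Delta_g f = 0$, on each annulus $A_r$ one has a bound of the form
\be
\sup_{|x| = \tfrac{3r}{2}} |\nabla f| \;\le\; \frac{C}{r}\, \sup_{A_r} |f| .
\ee
Combining this with the static equation $\nabla^2 f = f\Ric$ and $|\Ric|_g \le C r^{-\tau-2}$ on $A_r$ gives, on the sphere $|x| = \tfrac{3r}{2}$,
\be
|\nabla^2 f|_g \;\le\; C r^{-\tau-2}\, \sup_{A_r} |f| .
\ee

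Next I would convert these pointwise bounds into a growth estimate for $f$ itself by integrating along (Euclidean, or $g$-) radial paths and comparing the values of $|\nabla f|$ and $f$ on successive scales $r, 2r, 4r, \dots$. Writing $m(r) = \sup_{|x| \le r} |f|$ and $G(r) = \sup_{|x| = r} |\nabla f|_g$, the harmonic gradient estimate gives $G(\tfrac{3r}{2}) \le C m(2r)/r$, while the fundamental theorem of calculus along radial segments gives $m(2r) \le m(r) + C r\, \sup_{r \le |x| \le 2r} |\nabla f|_g$. The goal is to close these into a recursion showing $m(2r) \le (1 + C)\, m(r)$, or better, that $G(r) = O(r^{-1}) \cdot O(m(r))$ feeds back to force $m(r) = O(r)$. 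The cleaner route is probably to note that $|\nabla f|_g$ is itself controlled: differentiating, $\Delta|\nabla f|^2 = 2|\nabla^2 f|^2 + 2\la \nabla f, \nabla \Delta f\ra + 2\Ric(\nabla f, \nabla f) = 2|\nabla^2 f|^2 + 2\Ric(\nabla f,\nabla f)$, and since $\nabla^2 f = f\Ric$ this is $O(r^{-2\tau-4})|f|^2 + O(r^{-\tau-2})|\nabla f|^2$; an iteration on dyadic annuli then shows $|\nabla f|$ is bounded at infinity, whence $|f| = O(|x|)$ by integration.

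The step I expect to be the main obstacle is making the iteration on dyadic annuli genuinely closed rather than merely bounding the growth by something slightly super-linear (e.g. $r^{1+\delta}$). The decay exponent satisfies only $\tau > \tfrac12$, so the Ricci term contributes a nontrivial but summable perturbation; the point is that $\sum_k 2^{-k\tau}$ converges, so the multiplicative errors $(1 + C 2^{-k\tau})$ over the scales $r, 2r, \dots, 2^k r$ have a bounded product, which is exactly what keeps the growth linear rather than polynomially worse. I would be careful that the harmonic gradient estimate is applied with constants uniform in $r$ — this is legitimate because, after rescaling $x \mapsto x/r$, the rescaled metrics converge in $C^2$ to the Euclidean metric on a fixed annulus, so elliptic estimates apply with $r$-independent constants. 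Once $|\nabla f|$ is shown to be bounded outside a compact set, the conclusion $|f(x)| \le C|x|$ follows immediately by integrating $|\nabla f|$ along a radial curve from a fixed large sphere out to $x$.
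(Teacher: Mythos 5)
There is a genuine gap, and it sits exactly where you predicted: the dyadic iteration as you have set it up does not close. Your recursion couples $m(r)=\sup_{|x|\le r}|f|$ and $G(r)$ through the interior gradient estimate for harmonic functions, $G \le C m/r$, whose constant $C$ is a fixed (not small) elliptic constant. Feeding that into the fundamental theorem of calculus gives at best $m(2r)\le (1+C')\,m(r)$, i.e. $m(r)=O(r^{\log_2(1+C')})$, a polynomial bound of possibly large degree — and no refinement of this scheme can do better, because every harmonicity-based estimate you invoke is satisfied by harmonic polynomials of arbitrary degree. The summability of the factors $(1+C2^{-k\tau})$, which you offer as the fix, controls only the Ricci-perturbation terms; it does not touch the elliptic constant sitting in the homogeneous part of your recursion. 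The Bochner alternative has the same circularity: $\Delta|\nabla f|^2 \ge -Cr^{-2-\tau}|\nabla f|^2$ plus a source of size $r^{-2\tau-4}|f|^2$ only bounds $|\nabla f|$ if one already controls the growth of $f$ (or of $\int|\nabla f|^2$ on annuli, which again requires $f$), so "an iteration on dyadic annuli shows $|\nabla f|$ is bounded" is precisely the step that is missing, not a consequence of what precedes it.

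The mechanism that actually forces linearity is the pointwise consequence of the static equation, $|\nabla^2 f|\le |f|\,|\Ric|$, used so that the couplings in the recursion have constants $1+o(1)$ rather than a large $C$. The paper does this along minimal geodesics: $f''=h f$ with $h=\Ric(\gamma',\gamma')$ and, crucially, $|h|\le \e t^{-2}$ with $\e$ \emph{arbitrarily small} for $t$ large (since $|\Ric|=o(|x|^{-2})$); comparison with $w=At^{\a}$, $\a=\frac12(1+\sqrt{1+4\e})$, gives the preliminary bound $|f|=O(t^{\a})$ with $\a<1+\frac{\tau}{2}$, and then $|f''|\le C t^{\a-2-\tau}$ is integrable, so $f'$ is bounded and $|f|=O(|x|)$. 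Your endgame (bounded gradient, then integrate radially) coincides with the paper's, and a dyadic version can be made to work — but only if you couple $m$ and $G$ through $|\nabla^2 f|\le|f||\Ric|$ and the FTC (where the constants are $\tfrac12$ and $1+o(1)$, so the homogeneous growth rate is $1$ plus summable errors), discarding the harmonic gradient estimate entirely; note also that $|\nabla^2 f|\le Cr^{-2-\tau}\sup|f|$ is immediate from the static equation and never needed the elliptic estimate. Without a preliminary bound of the form $|f|=O(r^{1+\delta})$ with $\delta<\tau$, obtained by one of these routes, the bootstrap to bounded $|\nabla f|$ cannot start.
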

\begin{proof}
Let $ \Rm$ denote the Riemann curvature tensor of $g$.
By the AF condition \eqref{eq-AF-def},
we have
\be\label{eq-curvaturedecay}
r^{2+\tau}|\Rm|=O(1)
\ee
where $ r = | x |$.
Therefore, given any  $\e>0$,  there is  $r_0>\rho$  such that
$$|\Rm|(x)\le \frac 12 \epsilon | x |^{-2} \le \e (d(x)+r_0)^{-2}$$
if  $|x|>r_0$. Here  $d(x)=\dist(x, S_{r_0} )$, where $S_{r_0}=\p B(r_0)$, the Euclidean sphere with radius $r_0$.
Given any $x$ outside $ S_{r_0}$, let $\gamma(t)$, $ t \in [r_0, T]$,  be a minimal geodesic  parametrized by arc length connecting  $x$ and $S_{r_0}$
with
 $ \gamma (r_0) \in S_{r_0} $ and $ \gamma (T) = x$.
Then $f(t)=f(\gamma(t))$ satisfies
$$ f''(t) =h(t)f(t) , $$
where $ h(t)  = \Ric (\gamma'(t), \gamma'(t) ) $ satisfies
$$|h(t)|\le \e t^{-2}. $$
Let $\a=\frac12(1+\sqrt{1+4\e})$ and $a=\sup_{S_{r_0} } (|f|+|\nabla f|)$.
Define  $w(t)= A  t^\a$, where  $A >0$ is chosen so that $ A r_0^\a>a$ and  $ A  \a r_0^{\a-1}>a$, then  $w(t)$ satisfies
$$
w'' (t) =\e t^{-2}w, \  |f (r_0) | < w (r_0)  \ \mathrm{and} \   |f' (r_0) |<w' (r_0) .
$$
Suppose $ | f (t) | > w (t) $ for some $ t \in [r_0, T]$.
Let
$$t_1=\inf  \{t\in [r_0,T] \ | \ |f (t) | >  w (t) \} .$$
Then $ t_1 > r_0 $ and $ | f(t_1) | = w(t_1)$.
 On $[r_0,t_1]$, we have
$$
|f''(t)|=|h(t)f(t)|\le \e t^{-2}w=w''(t).
$$
Therefore, $\forall \ t \in [r_0,t_1]$,
$$
-w'(t)+w'(r_0)\le f'(t)-f'(r_0)\le w'(t)-w'(r_0)
$$
which implies
$
-w'(t)<  f'(t) <w'(t)
$
because $ |f'(r_0)| < w'(r_0)$. Integrating again, we have
$$
 -w(t)+w(r_0)< f(t)-f(r_0)< w(t)-w(r_0),
$$
which shows  $ - w(t) < f(t) < w(t) $ because
$|f(t_0)|<w(t_0)$.
Therefore, $ | f (t_1) | < w(t_1) $, which is a contradiction.
Hence we have
\be \label{eq-alphagrowth}
|f(t)|\le A t^\a, \ \forall \ t .
\ee
Now  choose $\e$ such that $\a< 1+\frac\tau2 $. It follows from  \eqref{eq-curvaturedecay} and \eqref{eq-alphagrowth} that
$$ |f''(t)| =|h(t)f(t)|\le A|h(t)|t^{1+\frac\tau2}  $$
where $|h(t)|\le C_1 t^{-2-\tau}$ for some $C_1$ independent on $x$ and $t$.
This shows  $|f' (t)| \le C_2 $ for some constant  $C_2  $ independent on $ x$.
Hence
 $$ | f (x) | \le a + C_2  ( |x| - r_0 )  ,$$
which proves  that $ f $ has at most linear growth.
\end{proof}

Using Lemma \ref{lma-linear-growth}, we now present the following structure result 
for static potentials near infinity (cf.  \cite[Proposition 2.1] {Beig-Chrusciel-96} and Remark \ref{rmk-prop-3-1}).  

\begin{prop}\label{prop-AF-static-f}
Suppose  $f$ is  a static potential on $(M, g)$. Then
\begin{enumerate}
\item[(i)] there exists a   tuple $(a_1, a_2, a_3)$ such that
 \bee \label{eq-f-xi}
  f= a_1 x_1 + a_2 x_2 + a_3 x_3 + h
 \eee
 where  $h$ satisfies $  \p h  = O_1 ( | x |^{ - \tau} )$
and
\bee \label{eq-condition-h-0}
| h | =
\lf\{
\begin{array}{lc}
O( |x|^{1-\tau}) & \  \mathrm{when} \   \tau < 1, \\
 O ( \ln |x| ) & \  \mathrm{when}   \ \tau = 1.
 \end{array}
 \ri.
\eee

\item[(ii)]  $ (a_1, a_2, a_3) = (0, 0, 0) $ if and only if $f  $ is bounded. In this case,
either $f > 0 $ near infinity or $ f < 0 $ near infinity; moreover,
upon  rescaling,
\bee \label{eq-f-xi-1}
f=1-\frac{m}{|x|}+o(|x|^{-1})
\eee
for some constant $m$.
\end{enumerate}

\end{prop}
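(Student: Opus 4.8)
The plan is to play the harmonic equation $\Delta f=0$ against the static equation $\nabla^2 f=f\,\Ric$, exploiting the curvature decay $|\Rm|=O(|x|^{-\tau-2})$ recorded in \eqref{eq-curvaturedecay}. For (i), start from $|f|\le C|x|$ (Lemma \ref{lma-linear-growth}); rescaling the metric on a dyadic annulus $\{R/2<|x|<2R\}$ by $x=R\tilde x$ makes it $C^2$-close to Euclidean uniformly in $R$, so interior gradient and Hessian estimates for $\Delta_g f=0$ give $|\nabla f|=O(1)$ near infinity. Inserting $|f|=O(|x|)$ and $|\nabla f|=O(1)$ into the coordinate form $\partial_i\partial_j f=f R_{ij}+\Gamma^k_{ij}\partial_k f$ of the static equation yields $|\partial^2 f|=O(|x|^{-\tau-1})$, which is integrable at infinity. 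Integrating $\partial_i f$ along radial rays and comparing on large spheres shows $\nabla f$ converges to a constant vector $a=(a_1,a_2,a_3)$ with $|\nabla f-a|=O(|x|^{-\tau})$; putting $h=f-(a_1x_1+a_2x_2+a_3x_3)$ and integrating $\partial h=O(|x|^{-\tau})$ gives the stated growth of $h$. That is (i).

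For (ii), if $a\neq 0$ then $f$ is unbounded because $h$ is sublinear; conversely suppose $a=0$. Then $|\nabla f|=O(|x|^{-\tau})$ and $|f|=O(|x|^{1-\tau})$ (with a possible $\ln|x|$ when $\tau=1$), so from the static equation $|\nabla^2 f|=O(|x|^{-2\tau-1})$ up to a logarithmic factor. Writing $0=\Delta_g f=\Delta_0 f+E$ with $E=(g^{ij}-\delta^{ij})\partial_i\partial_j f-g^{ij}\Gamma^k_{ij}\partial_k f$, the AF bounds give $|E|=O(|x|^{-2\tau-1})$ up to a logarithmic factor, and $2\tau+1>2$. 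Hence the Newtonian potential $w(x)=-\frac1{4\pi}\int_{|y|>\rho}|x-y|^{-1}E(y)\,dy$ converges, solves $\Delta_0 w=E$ on the end, and a standard near/intermediate/far splitting gives $|w|=O(|x|^{-(2\tau-1)})\to 0$. Therefore $f-w$ is Euclidean harmonic and sublinear on an exterior domain, so its multipole expansion reads $f-w=c+O(|x|^{-1})$; in particular $f$ is bounded and $f\to c$.

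It remains to rule out $c=0$ (which at once gives that $f$ is positive or negative near infinity, since $f\to c\neq0$). If $c=0$ then $f\to 0$, and a bootstrap — interior estimates give $|f|=O(|x|^{-\mu})\Rightarrow|\nabla f|=O(|x|^{-\mu-1})$, while the static equation gives $|\nabla^2 f|=|f||\Ric|=O(|x|^{-\mu-\tau-2})$, and integrating twice from infinity (legitimate since $f,\nabla f\to 0$) improves $O(|x|^{-\mu})$ to $O(|x|^{-\mu-\tau})$ — shows $f$ and all its derivatives decay faster than every power of $|x|^{-1}$. Then unique continuation at infinity for $\Delta_g f=0$ (for instance via the Kelvin-type inversion $\hat f(y)=|y|^{-1}f(y/|y|^2)$, which turns the problem into strong unique continuation at an interior point, or directly via a Carleman estimate for the scale-invariant inequality $|\Delta_0 f|\le C(|x|^{-2}|f|+|x|^{-1}|\nabla f|)$) forces $f\equiv0$, contradicting nontriviality. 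So $c\neq0$; after rescaling, $f\to1$. For the sharper form, note that once $f$ tends to $1$ the term $f\,\Ric$ is dominated by $\Ric$ and the connection term in $\partial^2 f$ is lower order, so $|\partial^2 f|=O(|x|^{-\tau-2})$; integrating twice from infinity yields $|\nabla f|=O(|x|^{-\tau-1})$ and $|f-1|=O(|x|^{-\tau})$. Re-running the Newtonian-potential step now gives $\Delta_0 f=E$ with $|E|=O(|x|^{-2\tau-2})$, $2\tau+2>3$, so $E$ is integrable and its potential has the expansion $w(x)=a_0|x|^{-1}+o(|x|^{-1})$; then $f-1-w$ is exterior harmonic, sublinear and tends to $0$, hence equals $b_0|x|^{-1}+O(|x|^{-2})$. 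Collecting the $|x|^{-1}$ terms gives $f=1-m|x|^{-1}+o(|x|^{-1})$ with $m=-(a_0+b_0)$, which completes (ii).

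The main obstacle is the vanishing step used to exclude $c=0$: the rest is elliptic bookkeeping with Newtonian potentials and exterior spherical harmonics, but ruling out a super-polynomially decaying static potential genuinely needs a unique-continuation-at-infinity (or Kelvin-inversion) argument, and one must check carefully that the error term in $\Delta_g f=\Delta_0 f+E$ really fits the scale-invariant form that such theorems require.
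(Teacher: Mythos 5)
Your proposal is correct in outline, and part (i) is essentially the paper's argument: the paper bounds $|\nabla f|$ by an ODE comparison applied to $\phi=|\nabla f|^2$ along minimal geodesics rather than by rescaled interior elliptic estimates, but after that both proofs read off $|\p^2 f|=O(|x|^{-1-\tau})$ from $\nabla^2 f=f\,\Ric$ plus Lemma \ref{lma-linear-growth} and integrate from infinity to produce the constant vector $(a_1,a_2,a_3)$ and the sublinear remainder $h$. Where you genuinely diverge is the heart of part (ii). The paper first proves that the zero set is \emph{bounded}: by Lemma \ref{lma-static-basic} (i), $|\nabla f|$ is a positive constant on each component of $f^{-1}(0)$, which is incompatible with $|\nabla f|\to 0$ unless each component is compact, and compact components are then excluded outside a large sphere by the maximum principle against the mean-convex foliation by coordinate spheres (the components are totally geodesic, hence minimal). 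This gives the sign of $f$ near infinity \emph{first}, and the sign is then used to exclude $\lim f=0$: a Green's-function barrier from below forces the coefficient $A$ in Bartnik's expansion $f=a+A|x|^{-1}+o(|x|^{-1})$ to be positive, while a direct bootstrap with the static equation forces $f=O(|x|^{-1-\tau})$, a contradiction. You instead obtain boundedness and the limit via Newtonian potentials (fine, and equivalent to the paper's $\tau'$-bootstrap plus the citation of Bartnik), get the sign as a corollary of $c\neq 0$, and exclude $c=0$ by showing super-polynomial decay and invoking unique continuation at infinity. That step is the real trade-off: your route completely avoids the minimal-surface/maximum-principle and barrier arguments, but it rests on a strong-unique-continuation theorem with singular lower-order terms (after Kelvin inversion), which is a much heavier analytic tool than anything in the paper; moreover, as written you quote the critical scale-invariant inequality $|\Delta_0 f|\le C(|x|^{-2}|f|+|x|^{-1}|\nabla f|)$, which is exactly the delicate regime where constants matter. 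You should note that the static equation actually gives the strictly subcritical bound $|\Delta_0 f|\le C\lf(|x|^{-2\tau-2}|f|+|x|^{-\tau-1}|\nabla f|\ri)$ with $\tau>\tfrac12$, so after inversion the coefficients are $O(|y|^{-2+\tau})$ and $O(|y|^{-1+\tau})$ and classical SUCP results apply; with that observation your argument closes, but the paper's barrier-plus-bootstrap proof is more elementary and self-contained.
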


\begin{proof}
By   \eqref{eq-AF-def} and Lemma \ref{lma-linear-growth},
$ |\nabla^2 f|=|f\Ric|=O(r^{-1-\tau})$ where $ r = | x|$.
Let $\phi=|\nabla f|^2$, then
\be \label{eq-grad-phi}
|\nabla\phi|^2\le 4|\nabla^2f|^2 \phi \le C_1r^{-2-2\tau}\phi
\ee
for some constant $C_1$.
By considering $\phi$ restricted to a minimal geodesic emanating from the boundary,  as in the proof of Lemma \ref{lma-linear-growth},
it is not hard to see that  \eqref{eq-grad-phi}  implies  $\phi$ is bounded. Hence
\be \label{eq-p2-est}
|\p_{x_i}\p_{x_j}f|=|f_{;ij}+\Gamma_{ij}^k\p_{x_k}f|=O(r^{-1-\tau}),
\ee
where `` $;$ " denotes covariant derivative   and $\Gamma^k_{ij}$ are the Christoffel symbols.
It follows from \eqref{eq-p2-est} that, for each $i$,
$ \lim_{x\to\infty}\p_{x_i}f $ exists and is finite.  Let $  a_i =  \lim_{x\to\infty}\p_{x_i}f  $ and define
$\lambda=\sum_{i=1}^3a_ix_i $, then
 $$
 |\p_{x_i}\p_{x_j}(f-\lambda)|=|\p_{x_i}\p_{x_j}f|=O(r^{-1-\tau})
 $$
 and
 $
\lim_{x\to\infty}\p_{x_i}(f-\lambda)=0.
$
This  implies
$$|\p_{x_i} (f-\lambda)|=O(r^{-\tau}),
$$
which then  shows
\be
f - \lambda=
\lf\{
\begin{array}{lc}
O(r^{1-\tau}) & \  \mathrm{when} \   \tau < 1, \\
 O ( \ln r ) & \  \mathrm{when}   \ \tau = 1.
 \end{array}
 \ri.
 \ee
 Let $h=f-\lambda$. This proves (i).

To prove (ii), first  suppose   $a_1=a_2=a_3=0$.
 Let $ \tau' $ be any fixed constant with $ \tau > \tau' > \frac12$.
 Then $ | f | = | h | = O ( r^{1 - \tau'})$, hence
 $ |\nabla^2 f|=|f\Ric|=O(r^{-1- 2 \tau'}) $. This combined with  $|\p_{x_i}f|=O(r^{-\tau})$  implies
 $|\p_{x_i}\p_{x_j}f|=O(r^{-1-2\tau'}) ,$
which  in turns shows  $ |\p_{x_i}f|=O(r^{-2\tau'})$. Since $2\tau'>1$, we conclude  that
$f$ has a finite limit as $x \rightarrow \infty$.
In particular, $f$ is bounded.

Next,  suppose $f$ is bounded.
Then $ a_1, a_2, a_3 $ must be zero since $h$ grows slower than a linear function.
Moreover,  $ \lim_{x \rightarrow \infty} \phi = 0$ since  $ | \p_{x_i} f | = O ( r^{- \tau} )$.
Let $ \Sigma=f^{-1}(0) $.
 By Lemma \ref{lma-static-basic}(i),  $ \Sigma$
is an embedded  totally geodesic surface and $\phi$ is a positive constant on any
connected component of $\Sigma$.  We want to prove that $\Sigma$ is bounded.

Let $P$ be any connected component of $\Sigma$,
then $P$ must be bounded (hence compact),
for otherwise contradicting to the fact $ \lim_{x \rightarrow \infty} \phi = 0 $
and $ \phi $ is a positive constant on $P$.
Next, note that there is $R_0>0$ such that $\p B(R)$, $\forall \  R\ge R_0$,
has positive mean curvature in  $(M, g)$.  Therefore,  for each fixed $P$,
 $P \cap \{ | x | > R_0 \} = \emptyset $ by the maximum principle and
 the fact that $P$ is a compact  embedded minimal surface.
 Since $R_0$ is independent of $P$, this implies  $\Sigma\cap \{ | x | > R_0 \} = \emptyset$,
 therefore either  $ f > 0 $ or $f < 0 $ on $\{ | x | > R_0 \}$.

To complete the proof,  let $ a =  \lim_{x \rightarrow \infty} f$ (which was shown to exists).
 Since $\Delta f=0$, we have
$  f = a + {A}{|x|^{-1}} + o ( | x |^{-1} ) $
for some constant  $A$ (cf. \cite{Bartnik-1986}). We want to show $ a \neq 0 $.
Suppose $ a = 0 $.
By what we have proved, we may assume $f>0$ near infinity.
 Let $ R > 0 $ be a constant such that  $ f > 0 $ on $ S_R = \p B (R) $.
Let   $ \psi $ be a harmonic function outside $ S_{R}$  such that
 $ \psi = \inf_{S_{R}} f > 0 $ on $ S_R $ and  $ \lim_{x \rightarrow \infty } \psi = 0 .$
Then $ f \ge \psi  $ by the maximum principle.
Since $\psi $ behaves like the Green's function which has a decay order of   $ \frac{1}{|x|}$, we have  $ A > 0 $.
On the other hand, the assumption $ a = 0 $ implies    $ f = O ( {| x |^{-1} })$, hence $|\nabla^2f|=O(r^{-3-\tau})$.
Since $|\p_{x_i}f|=O(r^{-2\tau'})$, we have
$ |\p_{x_i}\p_{x_j}f|=O(r^{-3-\tau})+O(r^{-1-\tau - 2\tau'})$ which implies  $|\p_{x_i}f|=O(r^{-3\tau'})$. Iterating this argument
and using
the fact  $ \tau'$ can be chosen arbitrarily close to $\tau$,
we conclude  $ |\p_{x_i}\p_{x_j}f|=O(r^{-3-\tau})$ and $ | \p_{x_i} f | = O ( r^{-2 - \tau})$.
This together with  $ a = 0 $  shows $|f|=O(r^{-1-\tau})$, contradicting the fact $A>0$.
Therefore,  $a\neq 0$. Multiplying $f$ by a nonzero constant, we conclude
$ f=1-{m}{|x|^{-1}}+o ( |x|^{-1} ) $
for some constant $m$.  This complete the proof of (ii).
\end{proof}

\begin{remark} \label{rmk-prop-3-1}
 Proposition \ref{prop-AF-static-f}  was also stated in a more general setting  
 by  Beig and  Chru\'{s}ciel in \cite[Proposition 2.1] {Beig-Chrusciel-96} for  KID (Killing initial data).  
 The proof of \cite[Proposition 2.1] {Beig-Chrusciel-96} was briefly outlined  in Appendix C in \cite{Beig-Chrusciel-96}.  
 For the convenience of the reader, we have presented a detailed proof of Proposition \ref{prop-AF-static-f}. 
 \end{remark}

The next proposition describes the zero set of  a static potential $f$ near infinity in the case that $f$ is unbounded.

\begin{prop}  \label{prop-AF-static-zeroset}
Suppose $f$ is an unbounded static potential on $(M, g)$.
There exists a new set of  coordinates $\{ y_i \}$ on $  \R^3 \setminus B (\rho)$ obtained by
a rotation of $\{x_i \}$ such that, outside a compact set,
$ f^{-1} (0)  $
is given by the graph of a smooth  function $ q = q(y_2, y_3) $ over
$$ \Omega_C = \{ (y_2, y_3) \ | \ y_2^2 + y_3^2 > C^2 \}  $$
for some constant $ C>0$, where   $ q $ satisfies
\be \label{eq-condition-q}
    \p q   = O_1 (|\bar{y}|^{-\tau} )
\  \ \mathrm{and} \ \
| q | =
\lf\{
\begin{array}{ll}
O( | \bar{y}|^{1-\tau}) & \  \mathrm{when} \   \tau < 1 \\
 O ( \ln |\bar{y}| ) & \  \mathrm{when}   \ \tau = 1.
 \end{array}
 \ri.
\ee
Here $ \bar{y} = (y_2, y_3) $.
As a result,
if $\gamma_{_R} \subset f^{-1}(0) $ is the curve   given by
$$ \gamma_{_R} = \{ ( q( y_2, y_3) , y_2, y_3) \ | \ y_2^2 + y_3^2 = R^2 \}  $$
and $\kappa $ is the geodesic curvature of $ \gamma_{_R} $ in $f^{-1}(0)$, then
\be  \label{eq-g-curvature}
\lim_{R \rightarrow \infty} \int_{\gamma_{_R}}  \kappa = 2 \pi .
\ee
\end{prop}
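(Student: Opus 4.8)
\textit{Proof proposal.} The plan is to combine the asymptotic expansion of $f$ from Proposition~\ref{prop-AF-static-f} with the implicit function theorem to produce the graph, and then to reduce the geodesic curvature statement to a computation on a $2$-dimensional asymptotically flat end.

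Since $f$ is unbounded, Proposition~\ref{prop-AF-static-f} (ii) gives $(a_1,a_2,a_3)\neq(0,0,0)$. After a rotation of $\{x_1,x_2,x_3\}$ we may assume $(a_1,a_2,a_3)=(a,0,0)$ with $a=|(a_1,a_2,a_3)|>0$; call the rotated coordinates $\{y_1,y_2,y_3\}$. Writing $f=ay_1+h$ with $\p h=O_1(|y|^{-\tau})$ and $|h|=o(|y|)$, and using $|\p_{y_i}\p_{y_j}f|=O(|y|^{-1-\tau})$ established in the proof of Proposition~\ref{prop-AF-static-f}, we get $\p_{y_1}f=a+O(|y|^{-\tau})$ and $\p_{y_\alpha}f=O(|y|^{-\tau})$ for $\alpha=2,3$, so there is $C>0$ with $\p_{y_1}f\ge a/2$ on $\{|y|>C\}$. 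First I would fix $\bar y=(y_2,y_3)$ with $|\bar y|>C$ and examine $t\mapsto f(t,\bar y)$ on $\{|t|\le|\bar y|\}$: it is strictly increasing with slope $\ge a/2$, and $|f(0,\bar y)|=|h(0,\bar y)|=o(|\bar y|)$, so it has a unique zero $t_0(\bar y)$ with $|t_0(\bar y)|=o(|\bar y|)$, which lies strictly inside the segment once $C$ is enlarged. Setting $q:=t_0$ on $\Omega_C$, the function $q$ is smooth by the implicit function theorem, and it satisfies $|q|=O(|\bar y|^{1-\tau})$ (resp. $O(\ln|\bar y|)$) because $|h|$ obeys the corresponding bound. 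To see that $f^{-1}(0)$ coincides with this graph outside a compact set, observe that if $(y_1,\bar y)\in f^{-1}(0)$ with $|(y_1,\bar y)|$ large, then either $|y_1|\le|\bar y|$, forcing $y_1=q(\bar y)$ by the uniqueness above, or $|y_1|>|\bar y|$, in which case $|f|\ge a|y_1|-o(|y_1|)>0$, a contradiction.

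Differentiating $f(q(\bar y),\bar y)=0$ gives $\p_{y_\alpha}q=-\p_{y_\alpha}f/\p_{y_1}f$ for $\alpha=2,3$; since $q=o(|\bar y|)$ the point $(q(\bar y),\bar y)$ has norm comparable to $|\bar y|$, so $\p_{y_\alpha}q=O(|\bar y|^{-\tau})$, and differentiating once more and invoking $|\p^2f|=O(|y|^{-1-\tau})$ yields $\p^2 q=O(|\bar y|^{-1-\tau})$; hence $\p q=O_1(|\bar y|^{-\tau})$. Now let $\Psi(\bar y)=(q(\bar y),\bar y)$ and let $h_{\alpha\beta}=g(\p_\alpha\Psi,\p_\beta\Psi)$ be the induced metric on $\Sigma=f^{-1}(0)$ in the coordinates $(y_2,y_3)$. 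Since $g_{ij}=\delta_{ij}+b_{ij}$ with $b_{ij}$ and $\p b_{ij}$ of orders $|\bar y|^{-\tau}$ and $|\bar y|^{-1-\tau}$ on the graph, and $\p q=O_1(|\bar y|^{-\tau})$, one finds $h_{\alpha\beta}=\delta_{\alpha\beta}+O_1(|\bar y|^{-\tau})$; thus $(\Sigma,h)$ is an asymptotically flat surface in these coordinates and $\gamma_{_R}$ is precisely the coordinate circle $\{|\bar y|=R\}$.

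Finally, parametrizing $\gamma_{_R}$ by $\theta\mapsto(R\cos\theta,R\sin\theta)$, the $h$-arclength element is $ds=R\,(1+O(R^{-\tau}))\,d\theta$, while the geodesic curvature of a coordinate circle differs from its flat value $1/R$ only through the Christoffel symbols of $h$, which are $O(R^{-1-\tau})$, and the $O(R^{-\tau})$ corrections to the metric coefficients, so $\kappa=\frac{1}{R}(1+O(R^{-\tau}))$. Hence $\int_{\gamma_{_R}}\kappa\,ds=\int_0^{2\pi}(1+O(R^{-\tau}))\,d\theta=2\pi+O(R^{-\tau})\to 2\pi$. The step I expect to be the main obstacle is not this computation but the passage from the pointwise asymptotics of $f$ to the global statement that $f^{-1}(0)$ is, outside a compact set, a single graph over $\Omega_C$ with no stray components; this is exactly where the sub-linear growth $|h|=o(|y|)$ is used in an essential way.
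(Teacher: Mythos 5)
Your proposal is correct and follows essentially the same route as the paper: rotate coordinates using Proposition \ref{prop-AF-static-f}, use the monotonicity of $f$ in $y_1$ together with the implicit function theorem to realize $f^{-1}(0)$ as a graph, derive the decay of $q$ and its derivatives from $f(q,\bar y)=0$, and then pull back the metric to $\Omega_C$ to get $\kappa = R^{-1}+O(R^{-1-\tau})$ and the limit $2\pi$. Your minor variations (working on the segment $|t|\le |\bar y|$ instead of letting $y_1\to\pm\infty$, and the case split $|y_1|\le|\bar y|$ versus $|y_1|>|\bar y|$ to rule out stray zeros) are equivalent to the paper's argument.
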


\begin{proof}
Let $(a_1, a_2, a_3) $ and $h$ be given by  Proposition \ref{prop-AF-static-f} such that
$ f = \sum_{i=1}^3  a_i x_i + h $. As $f$ is unbounded,    $(a_1, a_2, a_3) \neq (0, 0, 0)$.
We can rescale $f$ so that $\sum_{i=1}^3 a_i^2 = 1 $.
Hence, there exists new coordinates $ \{ y_i \}$ obtained by a rotation of $ \{ x_i \}$  such that
\be  \label{eq-f-h}
f = y_1 + h (y_1, y_2, y_3)
\ee
where $h$ satisfies
\be \label{eq-h-y}
\p h = O _1( | y |^{-\tau} ) \ \ \mathrm{and} \ \
| h | =
\lf\{
\begin{array}{ll}
O( |y|^{1-\tau}) & \  \mathrm{when} \   \tau < 1 \\
 O ( \ln |y| ) & \  \mathrm{when}   \ \tau = 1 .
 \end{array}
 \ri.
\ee
It follows from \eqref{eq-f-h} and \eqref{eq-h-y} that
$$ \frac{\p f}{\p y_1} = 1 + \frac{\p h}{\p y_1}  =  1 + O ( | y |^{ - \tau } ) . $$
Therefore there exists a constant $C > 0 $ such that
$$ \frac{\p f}{\p y_1} > \frac12,  \ \ \forall \  (y_2, y_3) \in \Omega_{C} = \{ ( y_2, y_3) \ | \ | \bar{y} | > C \} . $$
For any fixed $ (y_2, y_3) \in \Omega_{C}$,  \eqref{eq-f-h} and \eqref{eq-h-y} imply
$$ \lim_{y_1 \rightarrow -\infty} f = - \infty, \ \ \lim_{y_1 \rightarrow \infty} f = \infty . $$
Hence the set
$   f^{-1}(0) \cap \{ (y_1, y_2, y_3) \ | \ (y_2, y_3) \in \Omega_{C}  \} \neq \emptyset $
and  is  given by the graph of some function $ q = q (y_2, y_3) $ defined on  $ \Omega_C$.
Since $ \nabla f \neq 0$ on $ f^{-1} (0)$, $ q $ is a smooth function by the implicit function theorem.
Given the constant $ C$, \eqref{eq-f-h} and  \eqref{eq-h-y} imply there exists another constant $ C_1 > 0 $ such that
$$ | f | \ge  \frac12 | y_1 | > 0  \ \ \mathrm{whenever} \ \ | \bar{y}| \le C \ \mathrm{and}  \ | y_1 | > C_1 . $$
Therefore,
\bee
\begin{split}
& \   f^{-1}(0) \cap \{ (y_1, y_2, y_3) \ | \ (y_2, y_3) \in \Omega_{C}  \}  \\
 = & \  f^{-1}(0) \setminus  \{ (y_1, y_2, y_3) \ | \ | y_1 | \le C_1,  \ | \bar{y} | \le C   \}.
\end{split}
\eee
This proves that, outside a  compact set,  $f^{-1} (0)$ is given by the graph of $ q $ over $ \Omega_C$.

Next we estimate $ q $ and its derivatives.
The equation
 \be \label{eq-of-q}
 q + h (q, y_2, y_3) = 0
 \ee
 and \eqref{eq-h-y}   imply that, if $|\bar{y}|$ is large,
\bee
\begin{split}
  | q |  =    | h (q, y_2, y_3)|
  \le
  \lf\{
  \begin{array}{ll}
  C_2 \lf(   |q | +  |\bar{y} | \ri)^{1-\tau} , & \tau < 1 \\
  C _2 \ln \lf( | q | + | \bar{y}  | \ri) , & \tau = 1
  \end{array}
  \ri.
\end{split}
\eee
for some constant $ C_2 > 0 $.
This in turn implies, as $ | \bar{y}|\rightarrow \infty$,
\bee \label{eq-q-1over2}
| q | = O  ( | \bar{y} |^{1-\tau} )  \ \mathrm{if} \ \tau < 1 \ \ \mathrm{and} \ \
| q | = O (  \ln { |\bar{y}|} ) \ \ \mathrm{if} \ \tau = 1.
\eee
Let $ \a, \beta \in \{ 2, 3 \}$.
Taking derivative of \eqref{eq-of-q}, we have
\be \label{eq-D-1}
\frac{ \p q}{\p y_\alpha  } = -  \frac{ \frac{ \p h}{\p y_\alpha } } {  1 + \frac{\p h}{\p y_1 } } = O ( | \bar{y} |^{ -\tau } ).
\ee
Similarly,  by taking derivative of \eqref{eq-D-1}, we have
$ \displaystyle \frac{ \p^2 q}{\p y_\beta  y_\alpha } = O ( | \bar{y}|^{-1-\tau} )$.

To verify \eqref{eq-g-curvature}, we   consider  the pulled back  metric
$ \sigma = F^* ( g) $ on $ \Omega_C$
where $ F: \Omega_C \rightarrow \R^3  $ is given by $ F  (y_2, y_3) = ( q (y_2, y_3) , y_2, y_3 ) $.
It follows from \eqref{eq-AF-def} and  \eqref{eq-condition-q}
that
\be \label{eq-decay-sigma}
\sigma_{\alpha \beta} = \delta_{\alpha \beta} + h_{\alpha \beta}
\ee
where  $ \sigma_{\alpha \beta} = \sigma (\p_{y_\alpha}, \p_{y_\beta} ) $ and
$ h_{\alpha \beta} $ satisfies
\be \label{eq-decay-tau}
| h_{\alpha \beta} | + | \bar{y} | | \p  h_{\alpha \beta} | = O ( | \bar{y} |^{-\tau} ) .
\ee
Direct calculation using \eqref{eq-decay-sigma} and \eqref{eq-decay-tau} then shows
\be \label{eq-kappa}
{\kappa} = R^{-1}  + O( R^{-1- \tau} )
\ee
while the length of $ C_{_R} $ is $ 2 \pi R + O (R^{1-\tau} )$.
From this, we conclude that \eqref{eq-g-curvature} holds.
\end{proof}

\begin{remark}
In \cite{Beig-Schoen}, Beig and Schoen solved static $n$-body problem
in the case that there exists a closed, noncompact, totally geodesic surface
disjoint from the bodies. One may compare Proposition \ref{prop-AF-static-zeroset}
with Proposition 2.1 in \cite{Beig-Schoen}.
\end{remark}

Now we are ready to prove the main results of this section.

\begin{thm}\label{thm-dim2}
Let $(M, g)$ be a connected, asymptotically flat $3$-manifold with or without  boundary.
If $\dim(\K) \ge 2$,  then $(M, g)$ is flat.
\end{thm}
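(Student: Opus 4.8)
The plan is to prove the statement directly, using the two flatness‑producing tools already available: Theorem~\ref{thm-local-flat} (two linearly independent static potentials sharing a zero force $(M,g)$ flat) and the analyticity of a static metric in harmonic coordinates, so that flatness on one nonempty open subset of the connected manifold $M$ propagates to all of $M$. Fix two linearly independent $f_1,f_2\in\K$. If $f_1^{-1}(0)\cap f_2^{-1}(0)\neq\emptyset$, Theorem~\ref{thm-local-flat} already gives the conclusion; so we may assume $f_1^{-1}(0)\cap f_2^{-1}(0)=\emptyset$, whence every point of $M$ lies in $\{f_1\neq0\}\cup\{f_2\neq0\}$ and $g$ is analytic on all of $M$. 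It then suffices to exhibit a nonempty open set on which $g$ is flat. Pass to an end $E\cong\R^3\setminus B(\rho)$ with $g$ in the form~\eqref{eq-AF-def} for some $\tau\in(\tfrac12,1]$ (take $\tau=1$ if the decay is faster) and $R=0$; by unique continuation~\cite{Aronszajn-1957}, $f_1|_E,f_2|_E$ are still linearly independent, so $V:=\mathrm{span}(f_1|_E,f_2|_E)$ is a $2$‑dimensional space of static potentials on $E$. Let $\mathcal B\subset V$ be the subspace of bounded potentials. By Proposition~\ref{prop-AF-static-f}(ii) a nonzero bounded static potential has nonzero limit at infinity, so $b\mapsto\lim_\infty b$ is injective on $\mathcal B$ and $\dim\mathcal B\le1$. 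I will split into two cases.

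Suppose first $\dim\mathcal B=0$. Then the assignment $f\mapsto v_f:=(a_1,a_2,a_3)$ of Proposition~\ref{prop-AF-static-f}(i) is a linear map $V\to\R^3$ whose kernel is $\mathcal B=0$, so $v_1:=v_{f_1}$ and $v_2:=v_{f_2}$ are linearly independent in $\R^3$. After a rotation of coordinates $v_1,v_2$ span the $\{x_1,x_2\}$‑plane, and $f_i=v_i\cdot x+h_i$ with $h_i$ obeying the sublinear bounds of Proposition~\ref{prop-AF-static-f}(i). I then find common zeros near infinity along the $x_3$‑axis $\ell=v_1^\perp\cap v_2^\perp$: writing a point as $te_3+z$ with $z$ in the $\{x_1,x_2\}$‑plane, the system $f_1=f_2=0$ becomes $z=L^{-1}\bigl(-h_1(te_3+z),-h_2(te_3+z)\bigr)$, where $Lz=(v_1\cdot z,v_2\cdot z)$ is a linear isomorphism of the $\{x_1,x_2\}$‑plane onto $\R^2$; since $|h_i(te_3+z)|=O(t^{1-\tau})$ (or $O(\log t)$ if $\tau=1$) whenever $|z|\le t$, the right‑hand side is a continuous self‑map of a suitable small ball for each large $t$, so Brouwer's theorem yields a fixed point $z(t)$ and hence a point $te_3+z(t)\in f_1^{-1}(0)\cap f_2^{-1}(0)\subset E$. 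Theorem~\ref{thm-local-flat} then forces $(M,g)$ flat.

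Suppose instead $\dim\mathcal B=1$, say $\mathcal B=\R N$ with $N=1-m|x|^{-1}+o(|x|^{-1})$ after rescaling (Proposition~\ref{prop-AF-static-f}(ii)), so $N>0$ outside a compact set. Pick an unbounded $f\in V$. By Proposition~\ref{prop-AF-static-zeroset}, after a rotation the zero set $\Sigma:=f^{-1}(0)$ coincides outside a compact set with the graph $\Gamma$ of a function $q$ over $\Omega_C$, its induced metric $\sigma$ satisfies~\eqref{eq-decay-sigma}--\eqref{eq-decay-tau}, and $\int_{\gamma_R}\kappa\to2\pi$ as in~\eqref{eq-g-curvature}. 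Applying Lemma~\ref{lma-eq-on-zero} to the pair (zero‑set potential $f$, other potential $N$), the function $KN^3$ is constant, $\equiv c$ say, on the connected component $P$ of $\Sigma$ containing $\Gamma$. Along $\Gamma$ one has $N^3\to1$ and, by~\eqref{eq-decay-sigma}--\eqref{eq-decay-tau}, $\sqrt{\det\sigma}=1+O(|\bar y|^{-\tau})$; hence if $c\neq0$ then $\int_{\Gamma\cap\{C<|\bar y|<R\}}K\,dA_\sigma=c\int_{C<|\bar y|<R}N^{-3}\sqrt{\det\sigma}\,dy_2\,dy_3\to\pm\infty$ as $R\to\infty$, whereas Gauss--Bonnet on this topological annulus together with~\eqref{eq-g-curvature} forces the same integral to converge to a finite limit. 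Therefore $c=0$, i.e.\ $K\equiv0$ on $P$. Enlarge $C$ so that $N>0$ on $\{|x|>C\}$ and pick $p$ on the tail $\Gamma\cap\{|x|>C\}\subset P$. There $K=0$, so the induced metric on $\Gamma$ near $p$ is flat; set $Z=f/N$ near $p$, non‑constant (otherwise $f\equiv0$) and vanishing on $\Gamma$ near $p$. By Lemma~\ref{lma-static-quotient}(iii) a neighborhood $W$ of $p$ is isometric to $\bigl((-\epsilon,\epsilon)\times\Sigma',\,N^2\,dt^2+g_0\bigr)$ with $(\Sigma',g_0)$ the flat piece of $\Gamma$ through $p$; since $f_1,f_2$ restrict to two linearly independent static potentials on $W$ (unique continuation) and $N>0$, $R=0$ there, Lemma~\ref{lma-zero-curvature} gives that $W$ is flat. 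As $g$ is analytic on the connected manifold $M$ and flat on the nonempty open set $W$, it is flat on $M$.

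The main obstacle is the case $\dim\mathcal B=1$: there no pair of linearly independent static potentials need share a zero, so Theorem~\ref{thm-local-flat} is not directly applicable, and the decisive point is that the Gauss--Bonnet normalization~\eqref{eq-g-curvature} along the unbounded totally geodesic surface $f^{-1}(0)$ forces the constant $KN^3$ to vanish there, after which Lemma~\ref{lma-zero-curvature} upgrades ``flat zero set near infinity'' to ``flat ambient metric on an open set'' and analyticity finishes. Making the Gauss--Bonnet accounting and the weighted‑area estimate for the graph metric $\sigma$ fit together cleanly is the delicate part; by contrast, the case $\dim\mathcal B=0$ is a soft transversality‑at‑infinity argument.
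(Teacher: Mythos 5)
Your argument is correct, and its skeleton is the paper's: reduce via the boundedness trichotomy (your $\dim\mathcal{B}\le 1$ observation is exactly the paper's Case 1, where two bounded independent potentials are excluded by Proposition \ref{prop-AF-static-f}(ii)), use Proposition \ref{prop-AF-static-f}/\ref{prop-AF-static-zeroset} near infinity, and finish with Theorem \ref{thm-local-flat} or analyticity of the metric off the common zero set. The differences are in the two main cases. For $\dim\mathcal{B}=0$ (the paper's Case 3), you produce a common zero of $f_1,f_2$ by a Brouwer fixed-point argument along the axis orthogonal to the two linear parts, using only the expansion of Proposition \ref{prop-AF-static-f}(i); the paper instead uses the graph description of one zero set from Proposition \ref{prop-AF-static-zeroset} and a sign-change (intermediate value) argument for the second potential along that graph. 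Both are elementary and both hand the conclusion to Theorem \ref{thm-local-flat}; yours is a bit more symmetric in $f_1,f_2$ and avoids Proposition \ref{prop-AF-static-zeroset} in this case. For $\dim\mathcal{B}=1$ (the paper's Case 2), the mechanisms genuinely differ: the paper sweeps out an open set with the one-parameter family of zero sets $\{\tilde f-\lambda f=0\}$, shows each is flat via $Kf^3=\mathrm{const}$ (the constant vanishing because $K\to 0$ at infinity while $f$ is bounded) and Lemma \ref{lma-static-basic}(iv), and then invokes analyticity; you instead work with a single unbounded zero set, kill the constant in $KN^3$ by a Gauss--Bonnet/area-growth comparison against \eqref{eq-g-curvature}, and upgrade flatness of the slice to flatness of an ambient open set through the warped-product structure of Lemma \ref{lma-static-quotient}(iii) combined with Lemma \ref{lma-zero-curvature} --- i.e., you re-run the mechanism of Proposition \ref{prop-two-zeros}(ii) in a situation where the two zero sets need not meet. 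That buys a treatment in which the only open-set flatness input is Lemma \ref{lma-zero-curvature}, at the cost of a slightly heavier curvature bookkeeping: note that your Gauss--Bonnet step can be shortened, since along the tail the ambient curvature decay plus total geodesy give $K\to 0$ while $N^3\to 1$, forcing the constant to vanish directly, exactly as in the paper's Case 2. Two harmless imprecisions: $KN^3\equiv 0$ gives $K=0$ only where $N\neq 0$ (which is all you use, on the tail), and the non-constancy of $Z=f/N$ should be justified, as you do, by unique continuation for the harmonic function $f$.
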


\begin{proof}
It suffices to prove this result on an end of $(M, g)$.
So we assume $M$ is diffeomorphic to $\R^3$ minus an open ball.
Suppose $ f$ and $ \tf$ are two linearly independent static potentials.
We have the following three cases:

{\it Case 1}. Suppose both $f$ and $\tf$ are  bounded. By Proposition \ref{prop-AF-static-f} (ii), after rescaling,
we have
$$
f=1-\frac{m}{|x|}+o(|x|^{-1}), \ \ \tf=1-\frac{\tilde m}{|x|}+o(|x|^{-1})
$$
for some constants $m, \tilde m$.
Therefore, $ f - \tf $ is a bounded static potential satisfying
$f-\tf=-\frac{m-\tilde m}{|x|}+o(|x|^{-1})$. This contradicts  Proposition \ref{prop-AF-static-f} (ii).
Hence,  this case does not occur.

{\it Case 2}. Suppose $f$ is bounded and $\tf$ is unbounded.
By Proposition  \ref{prop-AF-static-f},  upon a rotation of coordinates and scaling,
we may assume that $\tf=x_1+h$, where $h$
 satisfies the properties in Proposition \ref{prop-AF-static-f}(i),
and  $f=1-\frac{m}{|x|}+o(|x|^{-1})$ for some constant $m$.
Let  $r_0>\rho$ be a fixed  constant such that
 $f>\frac12$ on $\{|x|\ge r_0\}$,
 and $ S_r = \p B(r) $ has positive mean curvature $\forall \  r \ge r_0$.
Let $\lambda_0>0$ be another constant such that if $\lambda>\lambda_0$,
$\tf_\lambda:=\tf-\lambda f$ will be negative on  $S_{r_0}$. For each $ \l > \l_0$,
let $ \Sigma_\lambda = \{ x \ | \ \tf_\lambda(x) = 0, \ | x | \ge r_0 \}$.
Then $ \Sigma_\l \neq \emptyset$ by Proposition  \ref{prop-AF-static-zeroset}.
As  $ \tf_\lambda < 0 $ on $ S_{r_0}$,    $ \Sigma_\l  $ does not intersect  $ S_{r_0}$.
Hence   $ \Sigma_\l$ is a surface without boundary.
Let $P  $ be any  connected component of $ \Sigma_\l$.
Since $(M, g)$ is foliated by positive mean curvature surfaces $\{ S_r \}$ outside $ S_{r_0}$
and   $P $ is  an embedded  minimal surface without boundary, $P  $ cannot be compact by the maximum principle.
 By  Proposition  \ref{prop-AF-static-zeroset}, we have
$P = \Sigma_\l $.
Let $K$ be the Gaussian curvature of $ \Sigma_\l$.
By Lemma \ref{lma-eq-on-zero}, $ K f^3 = C$  for some constant $C$ along $\Sigma_\l$.
Note that
$ \lim_{x \rightarrow \infty} K = 0  $
because  $ g$ is asymptotically flat  and $\Sigma_\l$ is totally geodesic.
This  implies $ C = 0 $ since $f$ is bounded.
Hence $ K f^3 = 0 $ on $\Sigma_\l$.
As $ f > 0 $ outside $S_{r_0}$, we conclude $ K = 0 $. Hence, $(M, g)$ is flat along  $\Sigma_\l$
by Lemma \ref{lma-static-basic}(iv).

Thus we have proved that $(M, g)$ is flat at every point in the set
$$
U=\bigcup_{\lambda>\lambda_0} \{ x \ | \ \tf(x)-\lambda f(x)=0, \ |x|>r_0\}.
$$
By the growth condition on $h$, we know that there exists  a constant $a>0$ such that
for all $x_1>a$ and all $(x_2, x_3) \in \R^2$ with $ x_2^2  + x_3^2 < 1 $, $$
{\tf(x_1,x_2,x_3)}>\lambda_0 {f(x_1,x_2,x_3)} > 0 .
$$
Clearly this implies that these points $(x_1,x_2,x_3)\in U$  and $U$ contains a nonempty interior.
Let $ \hat{M} = M \setminus ( f^{-1} (0) \cap \tilde{f}^{-1} (0) ) $. $ \hat{M}$ is either $M$ itself
or $ M$ minus an embedded curve, hence
$\hat{M}$ is  path-connected. Since $g$ is analytic on $\hat{M}$ which intersects $U$,
we conclude that $g$ is flat on $\hat{M}$,
hence flat everywhere in $M$.

 {\it Case 3}.
 Suppose both $ f$ and $ \tf$ are unbounded. By the proof of Proposition \ref{prop-AF-static-zeroset},
 upon a rotation of coordinates and scaling, we may assume
$  f  =   x_1  + h $,
$ \tf = a_1 x_1 + a_2 x_2 + a_3 x_3 + \th, $
where   $ h =O(|x|^{\theta})$, $\th =O(|x|^{\theta})$  for some constant $ 0 < \theta < 1$,
 and $ a_i$, $ i = 1, 2, 3$, are some  constants. Moreover, we can assume that
 $f^{-1}(0)$, outside a compact set,  is given by the graph of
$ q = q (x_2, x_3) $ where $ q = O ( | x_2 |^\theta + |x_3|^{\theta}) $.

Replacing $\tf$ by $\tf-a_1f$, we may assume $a_1=0$.
In this case, if $ a_2 = a_3 = 0 $, then Proposition \ref{prop-AF-static-f} (ii) implies
that $\tf$ is bounded and we are back to Case 2.
Therefore we may assume $(a_2, a_3) \neq (0, 0)$.
Without loss of generality, we can  assume  $a_2=1$ upon rescaling $\tf$
so that
$\tf =   x_2 + a_3 x_3 + \th.$
Given any large positive number $ a$, consider the point $ x_+ = ( q(a, 0), a, 0)$ which lies in $ f^{-1} (0)$.
We have
\be
\begin{split}
 \tf (x_+) = & \ a + \th (q(a,0), a, 0) \\
          = & \ a + O ( | a |^{\theta^2} + | a|^\theta ) .
 \end{split}
 \ee
Hence $ \tf (x_+) > 0 $ if $a $ is sufficiently large.  Similarly, we have $ \tf (x_-) < 0 $, where $ x_- = ( q( - a , 0) , - a, 0 )$,
for large $a$. Since $ x_+ $ and $ x_-$ can be joint by a curve that is contained in the graph of $q$, hence in $f^{-1}(0)$, we
conclude
$$ f^{-1} (0) \cap \tilde{f}^{-1} (0) \neq \emptyset .$$
Therefore $(M, g)$ is flat  by Theorem \ref{thm-local-flat}.
\end{proof}

\begin{thm} \label{thm-AS}
Let  $g$ be a smooth metric on $M = \R^3 \setminus B(\rho)$, where $B(\rho)$ is an open ball,
such that
\be \label{eq-AS}
 g_{ij} (x) = \lf( 1 + \frac{m}{2 |x |} \ri)^4 \delta_{ij} + p_{ij}
\ee
where $   p_{ij} (x) = O_2 ( | x |^{-2} )    $
and $m \neq 0 $ is a constant.
If $f$ is a static potential of $(M, g)$, then $f$ does not vanish outside a compact set.
\end{thm}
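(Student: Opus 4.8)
The plan is to show that the assumption $f^{-1}(0)$ is unbounded leads to a contradiction with $m \neq 0$, via a Gauss--Bonnet argument on the zero set. First I would invoke Proposition \ref{prop-AF-static-f}: either $f$ is bounded or unbounded. If $f$ is bounded, then by part (ii) of that proposition we may rescale so that $f = 1 - m|x|^{-1} + o(|x|^{-1})$, and the same proposition already tells us $f$ is either positive or negative near infinity, so $f^{-1}(0)$ is bounded and we are done (indeed this is the easy case, and the constant $m$ in \eqref{eq-AS} matches the one appearing there, but we don't even need that). So the substantive case is when $f$ is unbounded, and the whole argument is to rule this out.

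Assume then $f$ is unbounded. By Proposition \ref{prop-AF-static-zeroset}, after a rotation of coordinates $\Sigma := f^{-1}(0)$ is, outside a compact set, the graph of a function $q = q(y_2,y_3)$ over $\Omega_C = \{ |\bar y| > C \}$, with the decay \eqref{eq-condition-q}, and moreover for the curves $\gamma_{_R} \subset \Sigma$ cut out by $|\bar y| = R$ one has $\lim_{R \to \infty}\int_{\gamma_{_R}} \kappa = 2\pi$. Now I would apply Gauss--Bonnet on the region $\Sigma_R \subset \Sigma$ bounded by $\gamma_{_{R_0}}$ (fixed) and $\gamma_{_R}$, together with possibly some compact inner boundary pieces of $\Sigma$: $\int_{\Sigma_R} K + \int_{\partial \Sigma_R} \kappa = 2\pi \chi(\Sigma_R)$. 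The boundary term $\int_{\gamma_{_R}}\kappa \to 2\pi$, and the inner boundary contribution plus $2\pi\chi$ is a fixed constant independent of $R$; hence $\int_{\Sigma_R} K$ converges as $R \to \infty$, so $\int_\Sigma K$ is finite. The key input from Lemma \ref{lma-eq-on-zero} (applied with $\tilde f = f$, i.e. the static potential whose zero set we are on — or more precisely with the roles as in that lemma) is that along each connected component of $\Sigma$, $K f^3$ is constant; but $f$ vanishes on $\Sigma$ (it \emph{is} $f^{-1}(0)$) — so that particular application is vacuous. Instead I must use Lemma \ref{lma-static-basic}(iv): at each point of $\Sigma$, $K = 2R_{11} = 2R_{22} = -R_{33}$, relating $K$ to the ambient Ricci curvature, and then estimate $K$ directly from the AS decay \eqref{eq-AS}.

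The crucial computation is the rate of decay of $K$ along $\Sigma$ near infinity, and here is where the AS hypothesis $|p_{ij}| = O_2(|x|^{-2})$ and $m \neq 0$ enter decisively. For the pure Schwarzschild metric $(1 + \tfrac{m}{2|x|})^4\delta_{ij}$ the Ricci curvature decays like $m|x|^{-3}$, and with the $O_2(|x|^{-2})$ correction the Ricci curvature of $g$ is $m|x|^{-3}(1 + o(1)) + O(|x|^{-4})$ in a suitable sense — the point being that the leading term has a \emph{definite sign} determined by $\mathrm{sgn}(m)$ and decays exactly like $|x|^{-3}$, so that restricted to $\Sigma$ (which goes to infinity with $|\bar y| \sim |y| $, by the graph estimate $|q| = o(|\bar y|)$), $K = -R_{33} \sim c\, m\, |\bar y|^{-3}$ for a positive constant $c$. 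Then $\int_\Sigma K$ over the graph region diverges: $\int_{\Omega_C} |\bar y|^{-3}\,dy_2\,dy_3 = \infty$ (the integrand is $\sim R^{-3}\cdot R\,dR = R^{-2}\,dR$, which diverges at the \emph{inner} end, not infinity — so I'd instead need the divergence to come from the lower limit, meaning $\Sigma$ must actually be noncompact \emph{and} extend to infinity, which it does). Let me restate: $\int_{C}^\infty R^{-2}\,dR$ converges. So the bare $|x|^{-3}$ decay gives a \emph{finite} integral — which is consistent with finiteness, no contradiction yet. The real contradiction must instead come from combining this with $\chi(\Sigma)$ and the sign of $K$: since $K$ has a fixed sign near infinity (that of $m$), Gauss--Bonnet forces $\chi(\Sigma) \ge 1$ if $m<0$ and constrains the topology if $m>0$; but a complete noncompact totally geodesic surface in an AF manifold, properly embedded and asymptotic to a plane, must have $\chi \le 1$ with the "plane" case $\chi = 1$ forcing $\int_\Sigma K = 0$ by the precise Gauss--Bonnet with the $2\pi$ boundary term — and then $K \equiv 0$ along $\Sigma$ since $K$ has one sign, whence by Lemma \ref{lma-static-basic}(iv) $g$ is flat along $\Sigma$, contradicting $R_{33} \sim c m |\bar y|^{-3} \neq 0$ for $m \neq 0$.

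So the logical skeleton is: (1) reduce to $f$ unbounded; (2) get the graphical structure and the $\int \kappa \to 2\pi$ statement from Proposition \ref{prop-AF-static-zeroset}; (3) run Gauss--Bonnet on exhausting pieces of $\Sigma$ to conclude $\chi(\Sigma_R)$ stabilizes and $\int_\Sigma K$ is finite with $2\pi\chi(\Sigma) = \int_\Sigma K + (\text{fixed}) + 2\pi$; (4) compute from \eqref{eq-AS} that along $\Sigma$ the Gaussian curvature satisfies $K = -R_{33} = -\tfrac{3m}{2}|x|^{-3} + O(|x|^{-4})$ (the Schwarzschild Ricci eigenvalue in the normal direction is $-m|x|^{-3}$ up to positive constant, with sign opposite to the tangential ones whose sum it must be, consistently with $R = 0$), which has the sign of $-m$ and is nonvanishing for large $|x|$; (5) therefore $K$ does not change sign outside a compact subset of $\Sigma$, and $\int_\Sigma K$ converges but is $\neq 0$ unless $K \equiv 0$, which it isn't; balance this against the topological identity to force $g$ flat along $\Sigma$, contradicting (4). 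The \textbf{main obstacle} is step (4)–(5): one must extract from the $O_2(|x|^{-2})$ perturbation a \emph{one-sided, non-vanishing} lower bound on $|R_{33}|$ along $\Sigma$ — the error terms from $p_{ij}$ are only $O(|x|^{-4})$ but there is no a priori control that they don't conspire — so the argument needs the sharp Schwarzschild computation showing the $m|x|^{-3}$ term genuinely dominates, together with the fact (from Proposition \ref{prop-AF-static-zeroset}) that $\Sigma$ escapes to infinity within the region where $|y| \sim |\bar y|$, so the curvature estimate transfers to an estimate in terms of the intrinsic distance on $\Sigma$, making the Gauss--Bonnet bookkeeping rigorous.
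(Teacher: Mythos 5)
There is a genuine gap at steps (3)--(5): the Gauss--Bonnet bookkeeping cannot produce a contradiction. What you correctly extract from the AS expansion is only the trace-type quantity $K=-\Ric(\nu,\nu)\approx -m|\bar y|^{-3}$ along the graph region, and, as you yourself note, $\int_{\Sigma}|K|$ over that region is \emph{finite} (of size roughly $|m|/C$). Gauss--Bonnet then gives $\int_\Sigma K=2\pi\chi(\Sigma)-2\pi-\int_{\p\Sigma}\kappa$, but nothing in the hypotheses controls the right-hand side: in the setting of the theorem $M=\R^3\setminus B(\rho)$ has a boundary, so the unbounded component of $f^{-1}(0)$ may have boundary circles on $\p M$ with uncontrolled geodesic curvature, its Euler characteristic need not be $1$, and --- decisively --- the AS decay constrains $K$ only near infinity, so even if one had $\int_\Sigma K=0$ this would not force $K\equiv 0$: the compact part of $\Sigma$ can carry curvature of either sign that balances the small asymptotic contribution. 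So the sign and decay of $K$ at infinity, which is all that $K=-R_{33}$ gives you, is fully consistent with an unbounded zero set, and no contradiction with $m\neq 0$ arises along this route. (Also, the constant in your expansion should be $K\approx -m|y|^{-3}$, not $-\tfrac{3m}{2}|y|^{-3}$, though this is not the essential issue.)

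The missing idea is to use the \emph{off-trace} information in Lemma \ref{lma-static-basic}(iii): on the zero set the Ricci tensor restricted to the tangent plane must be isotropic (the two tangential eigenvalues coincide), which is a pointwise constraint much stronger than any integrated identity. This is how the paper argues: after the same reduction via Propositions \ref{prop-AF-static-f} and \ref{prop-AF-static-zeroset}, it invokes the Huisken--Yau expansion \eqref{eq-Ricci-est}, $\Ric_{ij}=\frac{m}{|y|^3}\phi^{-2}(\delta_{ij}-3y_iy_j/|y|^2)+O(|y|^{-4})$, and compares the two unit tangent directions $\tilde v,\tilde w$ of the graph coming from $\p_{y_2}$ and $\p_{y_3}$; the difference $\Ric(\tilde v,\tilde v)-\Ric(\tilde w,\tilde w)=\frac{3m}{\phi^6}\frac{y_3^2-y_2^2}{|y|^5}+O(|\bar y|^{-4})$ is nonzero at points with $y_2=0$, $y_3$ large (it is $\sim 3m|y_3|^{-3}$), contradicting the required tangential isotropy. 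Note that the anisotropic term $-3y_iy_jm/|y|^5$ is exactly what your trace computation discards, and it is where $m\neq 0$ enters at a pointwise level rather than through a global integral. If you want to keep a Gauss--Bonnet flavor, you would still need this pointwise anisotropy; the integral of $K$ alone cannot detect it.
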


\begin{proof}
By Proposition \ref{prop-AF-static-f} (ii),  it suffices to prove that $f$ is  bounded.
Suppose  $f$ is  unbounded,  by Proposition \ref{prop-AF-static-zeroset}
   there exists a new set of coordinates $\{ y_i \}$, obtained by
a rotation of $ \{ x_i \}$,  such that the zero set of $f$ which we denote by $\Sigma$, outside a compact set,  is given by
the graph of a smooth  function $ q = q(y_2, y_3)$ defined on
$$ \Omega_C = \{ (y_2, y_3 ) \ | \
 \ y_2^2 + y_3^2 > C^2 \}$$
for some constant $C>0$. Here $ q $ satisfies \eqref{eq-condition-q} with $\tau = 1$.

Since $ \{ y_i \}$ differs from  $ \{ x_i \}$ only by a rotation,
the asymptotically Schwarzschild condition \eqref{eq-AS} is preserved  in the $\{ y_i \}$ coordinates, i.e.
\be \label{eq-AS-y}
 g_{ij} (y) = \lf( 1 + \frac{m}{2 |y |} \ri)^4 \delta_{ij} +  p_{ij}
\ee
where $   p_{ij} (y) = O_2 ( | y |^{-2} )    $.
The  Ricci curvature of $g$ now can be  estimated explicitly in terms of $y$.
By  \cite[Lemma 1.2]{Huisken-Yau1996},     \eqref{eq-AS-y} implies
\be \label{eq-Ricci-est}
\Ric(\p_{y_i}, \p_{y_j} ) = \frac{m}{ | y|^3 }  \phi (y)^{-2} \lf( \delta_{ij} - 3 \frac{ y_i y_j }{ | y |^2 } \ri) + O ( | y |^{-4} ) ,
\ee
where  $ \phi (y) = 1 + \frac{m}{ 2 | y |}$.

Given any $ \bar{y} = (y_2, y_3) \in \Omega_C$,  let  $y =  (q(\bar{y}), y_2, y_3)$ and
$ T_{ {y}} \Sigma  $  be the tangent space to $ \Sigma$ at $y$.
As a subspace in $ T_y \R^3$, $ T_{{y}} \Sigma  $ is spanned by
$$ v  =  (\p_{ y_2} q)  \p_{ y_1}  + \p_{ y_2} , \ w  =  (\p_{ y_3} q)  \p_{ y_1}  + \p_{ y_3}  .$$
Let $ | v|_g$, $| w|_g$ be the length of $v$, $w$  with respect to $g$   respectively.
Define  $ \tilde{v} = | v|_g^{-1} v,  \tilde{w} = | w|_g^{-1} w $,
we want to   compare
$$  \Ric (\tilde{v}, \tilde{v} ) \ \ \mathrm{and} \ \
\Ric( \tilde{w}, \tilde{w} )$$
when  $ | \bar{y} | $ is large.
By \eqref{eq-condition-q} and \eqref{eq-Ricci-est},  we have
\be \label{eq-Ric-v}
\begin{split}
 \Ric (v, v)
=  & \
 \frac{m}{ |y|^3} \phi (y)^{-2}  \lf[ 1 +  (\p_{ y_2} q)^2  - \frac{3}{|y|^2}  \lf[   (\p_{y_2} q) q +  y_2 \ri]^2    \ri] +  O ( |\bar{y}|^{-4} ) \\
  = & \  \frac{m}{ |y|^3}    \phi(y)^{-2} \lf( 1  - \frac{3 y_2^2 }{ |  y |^2 }   \ri) +  O ( |\bar{y}|^{-4} ) .
\end{split}
\ee
Similarly,
\be
\Ric (w, w)  =  \frac{m}{ |y|^3}   \phi(y)^{-2}     \lf( 1  - \frac{3 y_3^2 }{ |  y |^2 }   \ri) +  O ( |\bar{y}|^{-4} ).
 \ee
On the other hand, \eqref{eq-condition-q} and \eqref{eq-AS-y} imply
\be \label{eq-v-length}
| v |^2_g =  \phi(y)^4 + O ( | \bar y |^{-2 } ) , \
| w |^2_g =  \phi(y)^4 + O ( | \bar y |^{-2 } )  .
\ee
Therefore, it follows from \eqref{eq-Ric-v} -- \eqref{eq-v-length} that
\be \label{eq-differ-Ric}
\begin{split}
\Ric (\tilde{v}, \tilde{v})  - \Ric (\tilde{w} , \tilde{w} )
=  \frac{3 m }{\phi (y)^{6} }   \frac{  \lf(   y_3^2 -   y_2^2  \ri)   }{ | y |^5  } +  O ( |\bar{y}|^{-4 } ).
\end{split}
\ee
Together with \eqref{eq-condition-q},  this shows that there exists $ (y_2, y_3) $ such that
$ \Ric ( \tilde{v}, \tilde{v} )  \neq \Ric ( \tilde{w}, \tilde{w})$  when  $ | \bar y| $ is large.
For instance, let $y_2 = 0 $ and $ y_3 \rightarrow + \infty$, then
\be
\begin{split}
 | y_3 |^2 ( \Ric (\tilde{v}, \tilde{v})  - \Ric (\tilde{w} , \tilde{w} ) ) \longrightarrow 3 m \neq 0 .
\end{split}
\ee
This  is a contradiction to  Lemma \ref{lma-static-basic} (iii).
We conclude that $f$ must be bounded.
\end{proof}

\section{Rigidity of  static asymptotically flat  manifolds}

In this section, we consider a complete,  asymptotically flat $3$-manifold  without boundary,
with finitely many ends, on which there exists a static potential $f$.
Two basic examples  are

\begin{example}
The Euclidean space $ (\R^3 , g_0 )$. Here
$f = a_0 +  \sum_{i=1}^3 a_i x_i $ and $ \{ a_i \} $ are constants.
\end{example}

\begin{example}
A spatial Schwarzschild manifold with  mass $m > 0$, i.e. $ ( \R^3 \setminus \{ 0 \},  ( 1 + \frac{  m}{ 2 | x | }  )^4 g_0 ) $.
In this case, $ f = \frac{ 1 - \frac{ m}{ 2 |x|} }{ 1 + \frac{m}{ 2|x|} }  $.
\end{example}

A natural question is whether these are the only examples of such manifolds?
We start by  showing  that $f$ must have nonempty zero set unless the manifold is
$(\R^3, g_0)$.

\begin{lma}\label{lma-zero-exist}
Let $(M,g)$ be a complete, connected, asymptotically flat $3$-manifold without boundary.
If $(M, g)$ has a static  potential $f$,
then $f^{-1}(0)$ is nonempty unless $(M, g)$ is isometric to $(\R^3, g_0)$.
\end{lma}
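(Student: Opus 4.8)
Suppose $f^{-1}(0) = \emptyset$. Then $f$ has a fixed sign, say $f > 0$, everywhere on $M$; after rescaling on each end, Proposition \ref{prop-AF-static-f} (ii) gives an expansion $f = 1 - m_\alpha |x|^{-1} + o(|x|^{-1})$ at each end $E_\alpha$, so $f$ is bounded, $0 < f$, and $f$ has a finite positive limit along each end. The plan is to show that the absence of a zero set forces $\Ric \equiv 0$, and then that a Ricci-flat, asymptotically flat $3$-manifold admitting a positive static potential is flat and in fact Euclidean.

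The first step is to prove $M$ has only one end. Since $\Delta f = 0$ and $f$ tends to distinct positive constants on the various ends, if there were two ends we could on one of them subtract a constant to make $f$ change sign, contradicting $f > 0$; more carefully, $f$ attains neither its sup nor its inf in the interior (by the strong maximum principle, as $\Delta f = 0$ and $f$ is nonconstant unless $\Ric \equiv 0$, which we treat separately), so these extrema are approached only along the ends, and the limiting constants along all ends must coincide — but then $f$ minus that common constant is a harmonic function tending to $0$ at every end, hence identically $0$, i.e. $f$ is a constant, which by the static equation forces $\Ric \equiv 0$. So either $\Ric \equiv 0$ directly, or $M$ has a single end.

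In the remaining case ($M$ one-ended, $f$ nonconstant positive, $f \to 1$ at infinity), I would argue $\Ric \equiv 0$ as follows. Because $f$ is positive everywhere and is a static potential, Proposition \ref{prop-tod} applies globally; moreover $Z := 1/f$ or the analysis of level sets via Lemma \ref{lma-static-quotient} is available using $N = f$ itself. The cleanest route is: by Proposition \ref{prop-AF-static-f} (ii), $f = 1 - m|x|^{-1} + o(|x|^{-1})$, so the ADM mass is (up to normalization) $m$; one then wants $m = 0$. This should follow from integrating the static equation. Concretely, $\Ric = f^{-1}\nabla^2 f$, and taking the trace gives $R = f^{-1}\Delta f = 0$ (consistent), while a Pohozaev- or flux-type identity — integrate $\div(f\,\Ric - (\text{something}))$ over large balls and use the decay of $\Ric = O(|x|^{-1-\tau})$ from Lemma \ref{lma-linear-growth}'s proof together with $f\to 1$ — expresses $m$ in terms of a bulk integral of a nonnegative curvature quantity that vanishes iff $\Ric \equiv 0$. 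Alternatively, invoke the Riemannian positive mass theorem: if $m > 0$ there is strictly positive mass with zero scalar curvature, which is allowed, so this alone does not give a contradiction; instead I would use the rigidity already available in the paper, applying Theorem \ref{thm-AS} is not quite it either since we have not assumed asymptotic Schwarzschild.

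The \textbf{main obstacle} is precisely this last step: ruling out a nonconstant bounded positive static potential without the extra asymptotically-Schwarzschild hypothesis. I expect the intended argument is that a bounded positive static potential $f\to 1$ gives, via $\tM = M \setminus f^{-1}(0) = M$, a complete static vacuum spacetime $(\R\times M, -f^2dt^2 + g)$ that is geodesically complete and asymptotically flat with timelike Killing field of constant norm $\to 1$; one can then appeal to the known uniqueness/rigidity for such spacetimes (Lichnerowicz-type: a static vacuum spacetime with everywhere positive, bounded lapse and no horizon is flat), or more elementarily observe that $f$ attaining no interior extremum plus $f\to 1$ at the single end and $f > 0$ on all of the complete $M$ forces, by the maximum principle comparison with the exact Schwarzschild potential on the end, that $m = 0$ and $f$ is harmonic bounded tending to $1$, hence $f \equiv 1$, hence $\Ric \equiv 0$. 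Once $\Ric \equiv 0$, a complete asymptotically flat Ricci-flat (hence, in dimension $3$, flat) manifold with one end is $(\R^3, g_0)$ by the standard classification, completing the proof. I would flag the spacetime-rigidity input as the step needing the most care, and present it with the appropriate citation (Beig-Chru\'sciel / Anderson-type results) if the purely initial-data argument does not close cleanly.
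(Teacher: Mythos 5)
Your proposal does not close: the step you yourself flag as ``the main obstacle'' --- ruling out a nonconstant, bounded, everywhere positive static potential without any asymptotically Schwarzschild or spacetime input --- is exactly the content of the lemma, and none of the candidate arguments you sketch is carried out or would work as stated. The positive mass theorem gives no contradiction (as you admit), the ``Pohozaev/flux identity expressing $m$ as a bulk integral of a nonnegative curvature quantity'' is not established and is not available in the paper, and the appeal to Lichnerowicz/Beig--Chru\'sciel spacetime rigidity is an external input the paper deliberately avoids. In addition, your reduction to one end is flawed: $f$ minus a constant is no longer a static potential, and distinct positive limits at different ends contradict nothing by themselves (in Schwarzschild the potential tends to $+1$ and $-1$ at the two ends), so the claim that the end limits must coincide does not follow from the maximum principle as you use it. Finally, the boundedness of $f$ should be justified by Proposition \ref{prop-AF-static-zeroset} (an unbounded static potential has a nonempty, graphical zero set near infinity, contradicting $f^{-1}(0)=\emptyset$); Proposition \ref{prop-AF-static-f}(ii) only gives the expansion \emph{after} boundedness is known.

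The paper's argument is much more elementary and avoids all of this. Since $f$ has no zeros, the Bochner formula combined with \eqref{eq-static-f-s} gives, everywhere on $M$,
\begin{equation*}
\tfrac12 \Delta |\nabla f|^2 \;=\; |\nabla^2 f|^2 \;+\; \tfrac12\, f^{-1}\, \nabla f\bigl( |\nabla f|^2 \bigr),
\end{equation*}
so $\phi=|\nabla f|^2$ satisfies the differential inequality $\Delta \phi - f^{-1}\langle\nabla f,\nabla\phi\rangle \ge 0$. Boundedness of $f$ (from Proposition \ref{prop-AF-static-zeroset}) together with Proposition \ref{prop-AF-static-f}(ii) gives $|\nabla f|\to 0$ at every end, so $\phi$ attains an interior maximum; the strong maximum principle then forces $\phi$ to be constant, hence identically zero. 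Thus $f$ is a nonzero constant, the static equation gives $\Ric\equiv 0$, so $(M,g)$ is flat and, being asymptotically flat and complete without boundary, isometric to $(\R^3,g_0)$ by volume comparison. If you want to salvage your write-up, replace the one-end reduction and the rigidity appeals by this maximum-principle argument on $|\nabla f|^2$, which uses the hypothesis $f^{-1}(0)=\emptyset$ precisely where it is needed (to make the drift term $f^{-1}\nabla f$ globally defined).
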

\begin{proof}
By  Bochner's formula and  the static equation \eqref{eq-static-f-s},
\be \label{eq-MP}
\begin{split}
 \frac12 \Delta | \nabla f |^2
 = & \ | \nabla^2 f |^2 + f^{-1}  \nabla^2 f ( \nabla f , \nabla f ) \\
  = & \ | \nabla^2 f |^2 + \frac12  f^{-1}  \nabla f ( | \nabla f |^2 )
\end{split}
 \ee
wherever $f \neq 0$.
Suppose  $f^{-1}(0) $ is empty, then  Proposition  \ref{prop-AF-static-zeroset} implies  $f$ is bounded.
By Proposition \ref{prop-AF-static-f} (ii), $\lim_{x\to\infty}|\nabla f|=0$ at each end of $(M, g)$.
Hence there is  $p \in M$ such that $|\nabla f|^2(p)=\sup_M|\nabla f|^2.$
By  \eqref{eq-MP}  and the strong maximum principle, $|\nabla f|^2$ must be a  constant and hence is  identically zero.
Therefore, $f$ is a nonzero constant and $\Ric=0$ everywhere. This shows  $(M, g)$ is flat and hence  isometric to $(\R^3, g_0)$ by volume comparison as $(M, g)$ is asymptotically flat.
\end{proof}

In \cite{Bunting-Masood}, Bunting and Masood-ul-Alam  proved that if $(M,g)$ is an
asymptotically flat $3$-manifold with  boundary, with one end,  
on which there is  a static potential  $f $ which goes to $1$ at $\infty$ and is $0$  on $\p M$,
 then $(M, g)$ is isometric to a spatial Schwarzschild manifold with positive mass outside its horizon.
  By examining the proof in \cite{Bunting-Masood}, we observe that the result in \cite{Bunting-Masood} holds on  manifolds with any number of ends.

\begin{prop}\label{prop-B-M}
Let $(M,g)$ be a complete, connected, asymptotically flat $3$-manifold with nonempty   boundary, with possibly more than one end.
Suppose $ f$ is a static potential  such that $f>0$ in the interior and  $f=0$ on $\p M$. Then
$(M, g)$ is isometric to a spatial Schwarzschild manifold with positive mass outside its horizon.
\end{prop}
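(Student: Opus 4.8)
The plan is to follow the Bunting--Masood-ul-Alam doubling argument and explain why the hypothesis of a single end is not essential. First I would set up the conformal change on two copies of $(M,g)$. Write $f$ for the static potential with $f>0$ in $\mathrm{Int}(M)$ and $f=0$ on $\p M$; by Proposition \ref{prop-AF-static-f} (ii) (applied end by end, after rescaling on each end) $f\to c_\alpha$ at each end, where $c_\alpha$ is a positive constant. Since $\Delta f=0$ and $f>0$ in the interior with $f=0$ on $\p M$, the maximum principle forces $0<f<c:=\max_\alpha c_\alpha$ everywhere; but in fact all the limits $c_\alpha$ must coincide, for otherwise $f-c_\alpha$ would be a bounded static potential vanishing at one end, contradicting Proposition \ref{prop-AF-static-f} (ii) applied at another end (compare Case 1 in the proof of Theorem \ref{thm-dim2}). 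So, after a single global rescaling, $f\to 1$ at every end and $0<f<1$ throughout $\mathrm{Int}(M)$.

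Next I would take two copies $(M_\pm,g)$, define on $M_+$ the conformal metric $g_+=\left(\tfrac{1+f}{2}\right)^4 g$ and on $M_-$ the metric $g_-=\left(\tfrac{1-f}{2}\right)^4 g$, and glue $M_+$ and $M_-$ along $\p M$ to obtain a manifold $\hat M$. The standard computation (as in \cite{Bunting-Masood}) shows: (a) $\p M$ is a totally geodesic minimal surface in each factor, and because $f=0$, $|\nabla f|$ constant on each component of $\p M$ (Lemma \ref{lma-static-basic} (i)), the metrics $g_+$ and $g_-$ match to $C^{1,1}$ across $\p M$, so $\hat g$ is a $C^{1,1}$ metric on $\hat M$; (b) using the static equations $\nabla^2 f=f\,\mathrm{Ric}$, $\Delta f=0$ and $R=0$, the conformal factor $u_\pm=\tfrac{1\pm f}{2}$ satisfies $\Delta u_\pm=0$, which forces the scalar curvature of $g_\pm$ to be nonnegative — in fact zero — by the conformal transformation law $R_{g_\pm}=-8u_\pm^{-5}\Delta u_\pm=0$; (c) the end of $M_-$, where $f\to 1$, is conformally compactified to a point, and $(\hat M,\hat g)$ has scalar curvature $\ge 0$ with one asymptotically flat end for each end of the original $(M,g)$, and total ADM mass zero at the distinguished end (the mass term in the expansion $f=1-m|x|^{-1}+o(|x|^{-1})$ cancels in the doubled, conformally rescaled metric exactly as in \cite{Bunting-Masood}). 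Here one must be a little careful: $\hat M$ inherits \emph{several} asymptotically flat ends (one $M_+$-end and one $M_-$-end for each end of $M$, except that each $M_-$-end is compactified). The point is that the Riemannian positive mass theorem \cite{SchoenYau79, Witten81} — which applies to manifolds with $R_{\hat g}\ge 0$ and any finite number of ends, picking out the mass at whichever end one likes — forces $(\hat M,\hat g)$ to be isometric to $(\R^3,g_0)$ once the mass at that end is zero, and the rigidity case of the positive mass theorem applies equally with several ends (an asymptotically flat manifold with $R\ge 0$ and zero mass at one end is flat regardless of the other ends). In particular $\hat M$ has exactly that one end, so $M$ had exactly one end.

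From the flatness of $(\hat M,\hat g)=(\R^3,g_0)$ one reads off the geometry of $(M,g)$: pulling back through the conformal factor, $g=u_+^{-4}g_0$ on $M_+$ with $u_+=\tfrac{1+f}{2}$ harmonic on $\R^3$ minus the image of $\p M$, positive, tending to $1$ at infinity, and with the image of $\p M$ a minimal (hence round, by the flat-space structure) sphere on which $u_+=\tfrac12$; this pins down $u_+=1+\tfrac{m}{2|x|}$ after a translation, with $m>0$, so $(M,g)$ is the exterior Schwarzschild manifold of mass $m$ outside its horizon, and correspondingly $f=\tfrac{1-m/2|x|}{1+m/2|x|}$. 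The main obstacle in this argument — and the only place the ``more than one end'' generalization needs genuine checking rather than verbatim citation — is step (c): verifying that the doubling-and-compactifying construction produces a $C^{1,1}$ complete manifold of nonnegative scalar curvature whose ADM mass at the chosen end vanishes, \emph{uniformly in the number of ends}, and that the rigidity statement of the positive mass theorem is insensitive to the presence of additional ends. Both are true, and in fact the additional ends are automatically excluded a posteriori by the rigidity conclusion; I would spell this out by invoking the version of the positive mass theorem for manifolds with corners along hypersurfaces due to the conformal gluing, exactly in the form used in \cite{Bunting-Masood}, and noting that its proof (via Schoen--Yau or Witten) makes no use of the number of ends.
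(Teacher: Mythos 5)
Your overall strategy is the same as the paper's (the Bunting--Masood-ul-Alam doubling with conformal factors $(1\pm f)^4$, matching $C^{1,1}$ across the totally geodesic boundary, and the rigidity case of the positive mass theorem at a zero-mass end), but there is a genuine gap at the step where you claim that all the asymptotic limits $c_\alpha$ of $f$ coincide. Your justification is that otherwise $f-c_\alpha$ ``would be a bounded static potential vanishing at one end,'' but $f-c_\alpha$ is \emph{not} a static potential: a nonzero constant $c$ satisfies $\nabla^2 c = c\,\Ric$ only where $\Ric=0$, so subtracting a constant from a solution of \eqref{eq-static-f-s} does not produce another solution. The analogy with Case 1 of the proof of Theorem \ref{thm-dim2} fails precisely because there one subtracts two genuine static potentials, and the difference of solutions of the linear system is again a solution, whereas constants are not solutions. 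With this step removed, your normalization ``$f\to 1$ at every end'' is unavailable, and your step (c) (compactification of \emph{every} $M_-$-end) does not go through at an end where $f\to a_j<1$.

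The paper's proof shows that this uniformity is in fact not needed: after scaling so that $\sup_M f=1$, the maximum principle only gives that the limit equals $1$ at \emph{some} end $E_1$; at ends where $a_j<1$ both conformal metrics $\gamma^{\pm}=(1\pm f)^4 g$ remain asymptotically flat (no compactification occurs there), while ends with $a_j=1$ behave as in your sketch. One then applies the positive mass theorem to the glued manifold at the single zero-mass end $E_1$, and the rigidity conclusion (flatness of the doubled manifold) shows \emph{a posteriori} that $M$ has only one end, after which the original Bunting--Masood-ul-Alam theorem finishes the argument. So the equality of the limits is a consequence of the proof, not an input, and your argument can be repaired by adopting this case division. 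A smaller omission: when you invoke Proposition \ref{prop-AF-static-f} (ii) end by end you tacitly assume $f$ is bounded there; in the paper this is deduced from Proposition \ref{prop-AF-static-zeroset}, since an unbounded static potential has an unbounded zero set, contradicting $f>0$ in the interior with $f=0$ only on the compact boundary.
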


\begin{proof}
Since $f > 0 $ away from the boundary,  $f$ must be  bounded by Proposition \ref{prop-AF-static-zeroset}.
Upon scaling, we may assume $\sup_M f=1$.
Suppose $(M, g)$ has $k$ ends $E_1$, $\ldots$, $E_k$, $k \ge 1$.
For each $ 1 \le i \le k$,  Proposition \ref{prop-AF-static-f} (ii) implies $\lim_{x\to\infty, x\in E_i}f(x)=a_i$ for some constant $0<a_i\le 1$.
By the maximal principal,  $a_i = 1 $ for some $i$. Without losing generality, we  may  assume  $a_1=1$.

We proceed as in \cite{Bunting-Masood}.  Define  $\gamma^{+}=(1+f)^4g$ and $\gamma^{-}=(1-f)^4g$.
Then the following are true:
\begin{itemize}
\item  $\gamma^+$ and $\gamma^-$ have zero scalar curvature (cf. Lemma 1 in \cite{Bunting-Masood}).

\item If $ a_j = 1$, then $E_j$ is an asymptotically flat end in  $(M, \gamma^+)$ and the  mass of $(M, \gamma^+)$ at $E_j$ is zero;
on the other hand, $E_j$ gets compactified in $(M, \gamma^-)$ in the sense that if $p_j$ is  the point of infinity at $E_j$, then there is a $W^{2, q}$  extension
of $\gamma^-$ to $E_j \cup \{ p_j \}$  (cf. Lemma 2 and 3 in \cite{Bunting-Masood})

\item If $ a_j < 1 $, then clearly $E_j$ is an asymptotically flat end in both $(M, \gamma^+)$ and $(M, \gamma^-)$.

\end{itemize}
Glue $(M, \gamma^+)$ and $(M, \gamma^-)$ along $\p M $ to obtain  a manifold   $(\tilde{M} , \tilde{g})$,
then $ \tilde{g}$ is $C^{1,1}$ across $\p M $ in $\tilde{M}$ (cf. Lemma 4 in \cite{Bunting-Masood}).
Apply the Riemannian positive mass theorem as stated in \cite[Theorem 1]{Bunting-Masood} and use the fact that the mass
of $E_1$ in $(\tilde{M}, \tilde{g})$ is zero, we conclude that $(\tilde{M}, \tilde{g})$ is isometric to $ (\R^3, g_0)$.
In particular, this shows that $(M, g)$ only has one end.
The rest now follows from the main theorem in \cite{Bunting-Masood}.
\end{proof}

Proposition \ref{prop-B-M} can be used to answer  the rigidity question in the case that   $f$ is bounded.

\begin{thm} \label{thm-rigidity-bounded}
Let $(M,g)$ be a complete, connected, asymptotically flat $3$-manifold without boundary, with finitely many ends.
If there exists   a  bounded static potential  on $(M, g)$,
then $(M, g)$ is  isometric  to either $(\R^3, g_0)$ or  a  spatial Schwarzschild manifold
$ ( \R^3 \setminus \{ 0  \},  ( 1 + \frac{  m}{ 2 | x | }  )^4 g_0 ) $ with $m>0$.
\end{thm}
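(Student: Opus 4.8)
The plan is to argue by a dichotomy on whether $f$ has a zero. If $f^{-1}(0)=\emptyset$, then Lemma \ref{lma-zero-exist} immediately identifies $(M,g)$ with $(\R^3,g_0)$ and we are done. So assume $\Sigma:=f^{-1}(0)\neq\emptyset$, and the first step is to establish that $\Sigma$ is a \emph{compact} totally geodesic hypersurface. By Lemma \ref{lma-static-basic}(i), $\Sigma$ is totally geodesic and $|\nabla f|$ is a positive constant on each connected component of $\Sigma$. Since $f$ is bounded, Proposition \ref{prop-AF-static-f}(ii) gives $|\nabla f|\to 0$ along every end, so no component of $\Sigma$ can run out to infinity; hence each component is compact. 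Repeating the maximum-principle argument used in the proof of Proposition \ref{prop-AF-static-f}(ii) — each end is eventually foliated by spheres of positive mean curvature, so a compact embedded minimal surface cannot enter the far region of any end — and using that there are finitely many ends, one concludes that $\Sigma$ is compact.

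Next I would decompose $M\setminus\Sigma$. On each connected component $\Omega$ the harmonic function $f$ has a fixed sign; write $M_\pm$ for the union of the components on which $\pm f>0$. Each such $\Omega$ is unbounded: $f$ vanishes on $\partial\Omega$, so a bounded $\Omega$ would force $f\equiv 0$ on $\overline\Omega$ by the maximum principle, a contradiction; hence $\Omega$ contains at least one end of $M$, and in particular $M\setminus\Sigma$ has finitely many components. Moreover $\partial\Omega$ is a nonempty union of components of $\Sigma$ (nonempty, for otherwise $\Omega$ would be open and closed in the connected $M$). Thus $\overline\Omega$ is a complete, asymptotically flat $3$-manifold with compact nonempty boundary and finitely many ends, carrying the static potential $f$ or $-f$ (equation \eqref{eq-static-f-s} being linear) which is positive in the interior and zero on $\partial\Omega$. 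Proposition \ref{prop-B-M} then forces $\overline\Omega$ to be isometric to a spatial Schwarzschild manifold of some mass $m_\Omega>0$ outside its horizon; in particular $\partial\Omega$ is a single round sphere of area $16\pi m_\Omega^2$.

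Finally, fix a connected component $P$ of $\Sigma$. Since $\nabla f\neq 0$ on $P$, the two sides of $P$ lie in two distinct components $\Omega_+\subset M_+$ and $\Omega_-\subset M_-$, and by the previous paragraph $\partial\Omega_+=\partial\Omega_-=P$ (each boundary being connected and containing $P$). Comparing the induced metric on $P$ as computed from the two sides — a round metric of area $16\pi m_{\Omega_+}^2$ and $16\pi m_{\Omega_-}^2$ respectively — gives $m_{\Omega_+}=m_{\Omega_-}=:m$. The conformal inversion $x\mapsto (m/2)^2 x/|x|^2$ is an isometry of the Schwarzschild metric of mass $m$ fixing the horizon $\{|x|=m/2\}$ pointwise, so it identifies $\overline{\Omega_-}$ with the bounded region $\{0<|x|\le m/2\}$; gluing with $\overline{\Omega_+}\cong\{|x|\ge m/2\}$ along $P$ shows that $N:=\overline{\Omega_+}\cup_P\overline{\Omega_-}$ is isometric to the complete spatial Schwarzschild manifold $(\R^3\setminus\{0\},(1+\tfrac{m}{2|x|})^4 g_0)$. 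Since $N$ is open and closed in $M$ and $M$ is connected, $M=N$, which is the desired conclusion.

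The step I expect to be most delicate is this last identification: matching the masses of the two Schwarzschild pieces through the geometry of the common sphere $P$, and carrying out the gluing via the inversion so that the given smooth metric on $M$ is genuinely respected across the totally geodesic sphere $P$. A related but more routine point is to confirm that the hypotheses of Proposition \ref{prop-B-M} truly hold for each $\overline\Omega$ (asymptotic flatness with finitely many ends, compact nonempty boundary), which rests on the compactness of $\Sigma$ established in the first step.
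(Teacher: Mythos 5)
Your proposal is correct and follows essentially the same route as the paper: establish that $f^{-1}(0)$ is nonempty (else conclude via Lemma \ref{lma-zero-exist}) and compact using Proposition \ref{prop-AF-static-f}(ii), apply Proposition \ref{prop-B-M} to the closures of the components of $\{f>0\}$ and $\{f<0\}$, match the two Schwarzschild masses through the area of the common horizon sphere, and glue. The only differences are cosmetic: you replace the paper's one-end versus multi-end case split by the dichotomy on $f^{-1}(0)$, and you spell out the gluing/connectedness step that the paper leaves implicit.
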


\begin{proof}   Let $f$ be a bounded static potential.
If $(M, g)$ has only one end, then $f$ must be a constant by Proposition \ref{prop-AF-static-f} (ii) and the fact $ \Delta f = 0 $.
Hence, $(M, g)$ is flat and is isometric to $(\R^3, g_0)$.

Next suppose  $(M, g)$  has more than one end,
in particular $(M, g)$ is not isometric to $(\R^3, g_0)$.
By Lemma \ref{lma-zero-exist},   $ f^{-1} (0) \neq \emptyset $.
By  Lemma \ref{lma-static-basic} (i) and Proposition \ref{prop-AF-static-f} (ii),    $ f^{-1}(0)   $ is a  closed totally geodesic hypersurface (possibly disconnected);
moreover $f$ changes sign near $f^{-1}(0) $.
Let $N_1$ be a component of   $\{f>0\}$, then
$N_1$ is  unbounded as $f = 0 $ on $\p N$.
Since $ f $ is either positive or  negative near the infinity of each end of $(M, g)$,
$N_1$ must be  asymptotically flat, with possibly more than one end, with nonempty  boundary $\Sigma$ on which  $f=0$.
By Proposition  \ref{prop-B-M} and \cite{Bunting-Masood},  $(N_1,g)$ is
isometric to
$ \lf( \{x\in \R^3 \ | \  |x|> \frac{m_1}{2} \}, \lf(1+\frac{m_1}{2 |x|}\ri)^4\delta_{ij} \ri)
$
with some constant $ m_1 > 0$.

Similarly, let  $N_2$ be the component of $\{f<0\}$ whose  boundary contains  $\Sigma$. By the same argument,  we know
that $(N_2,g)$ is isometric to $ \lf( \{y \in \R^3 \ | \  0<|y|< \frac{m_2}{2} \},  \lf(1+\frac{m_2}{2 |y|}\ri)^4\delta_{ij} \ri) $ for some $m_2 > 0 $.
Since $M$ is connected, we conclude that
 $M= N_1\cup N_2\cup \Sigma$.

Now we have $\Sigma=\{|x|=2m_1\}=\{|y|=2m_2\}$.
As the area of $\Sigma$ is given by $ 16 \pi m_1^2$ and $ 16 \pi m_2^2$ respectively, we have
$ m_1 = m_2 .$
This proves that  $(M,g)$ is isometric to a  spatial Schwarzschild manifold with positive mass.
\end{proof}

Next, we consider  the rigidity question  without the boundedness assumption of $f$.
We recall that, by Proposition \ref{prop-AF-static-f} (ii) and Proposition \ref{prop-AF-static-zeroset},
 the zero set of a static potential  on an asymptotically flat manifold  has only  finitely many components.

\begin{prop}  \label{prop-integration}
Let $(M,g)$ be a complete, connected, asymptotically flat $3$-manifold without boundary,  with finitely many ends $E_1$, $\ldots$, $E_k$.
Suppose there exists a static potential $f$  on $(M, g)$. Then
\begin{enumerate}
  \item [(i)]  $ \displaystyle  \int_M f|\Ric|^2=0 $.
  \item [(ii)]
  $  \displaystyle   \int_M |f|\,|\Ric|^2=4\pi\lf[ \sum_\a c_\a(\chi(\Sigma_\a)-k_\a)+\sum_\beta \tilde c_\beta\chi(\tilde \Sigma_\beta)\ri]  $.
Here   $ \{ \Sigma_\alpha \ |  \ 0 \le \alpha \le m   \} $ and  $ \{ \tilde \Sigma_\beta \ | \ 0 \le \beta \le n  \} $ are  the sets of  unbounded components and bounded components  of
$f^{-1} (0)$  respectively.    $ c_\a > 0$ and $\tilde{c}_\beta > 0$ are  the constants which equal $|\nabla f |$ on $\Sigma_\a $ and $\tilde{\Sigma}_\beta$ respectively.
For each $\a$, $k_\a \ge 1 $ is the number of ends $E_i$ with $E_i \cap \Sigma_\a \neq \emptyset$.
 $ \chi (\Sigma_\a)$ and $\chi (\tilde{\Sigma}_\beta )$ denote the Euler characteristic of $\Sigma_\a$ and $\tilde{\Sigma}_\beta$.
\end{enumerate}
\end{prop}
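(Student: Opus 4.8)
The plan is to integrate the divergence identity satisfied by the vector field $W$ with components $W_i = R_{ij}\nabla^j f$, i.e.\ $W = \Ric(\nabla f,\cdot)$. Differentiating the static equation $\nabla^2 f = f\Ric$ and using the contracted second Bianchi identity $\Div\Ric = \frac{1}{2}\nabla R = 0$ (valid since $R\equiv 0$), one gets the pointwise identity, valid on all of $M$,
\[
\Div W = \la \Div \Ric,\nabla f\ra + \la \Ric,\nabla^2 f\ra = f\,|\Ric|^2 .
\]
Lemma~\ref{lma-linear-growth} gives $|f| = O(r)$ on each end while asymptotic flatness gives $|\Ric| = O(r^{-2-\tau})$, so both $f\,|\Ric|^2$ and $|f|\,|\Ric|^2$ lie in $L^1(M)$, and the flux $\int_{S_r}|W| = O(r^{-\tau})\to 0$ as $r\to\infty$ on every end (here $S_r$ is a large coordinate sphere). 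I would also record, using $|\Ric|^2 = \frac{3}{2}K^2$ along $\Sigma := f^{-1}(0)$ (Lemma~\ref{lma-static-basic}(iv)), that the Gaussian curvature $K$ is absolutely integrable over each unbounded component of $\Sigma$.

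For part (i), integrate $\Div W = f\,|\Ric|^2$ over $M_R := M\cap\{r\le R\}$, whose boundary is a union of coordinate spheres, and let $R\to\infty$; the boundary flux vanishes by the decay above, yielding $\int_M f\,|\Ric|^2 = 0$. For part (ii), apply the divergence theorem instead over $M_R^+ := M_R\cap\{f>0\}$. Its boundary consists of $\Sigma\cap M_R$, along which the outward unit normal of $M_R^+$ is $\nu_+ = -\nabla f/|\nabla f|$, together with the part of $\partial M_R$ in $\{f>0\}$. Along $\Sigma$, $\nabla f$ is a normal eigenvector of $\Ric$ with eigenvalue $-K$ (Lemma~\ref{lma-static-basic}(ii),(iv)), so $\la W,\nu_+\ra = -|\nabla f|^{-1}\Ric(\nabla f,\nabla f) = |\nabla f|\,K$; since $|\nabla f|$ is a positive constant, $c_\alpha$ or $\tilde c_\beta$, on each component of $\Sigma$ (Lemma~\ref{lma-static-basic}(i)), letting $R\to\infty$ gives $\int_{\{f>0\}}f\,|\Ric|^2 = \sum_a c_a\int_{\Sigma_a}K$, the sum running over all components of $\Sigma$. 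The same computation on $\{f<0\}$ (where the outward normal is $+\nabla f/|\nabla f|$) gives $\int_{\{f<0\}}(-f)\,|\Ric|^2 = \sum_a c_a\int_{\Sigma_a}K$ as well (equivalently, combine with part (i)), hence
\[
\int_M |f|\,|\Ric|^2 = 2\sum_a c_a\int_{\Sigma_a}K .
\]

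It then remains to evaluate $\int_{\Sigma_a}K$ via Gauss--Bonnet. For a bounded (closed) component $\tilde\Sigma_\beta$ this is $2\pi\chi(\tilde\Sigma_\beta)$. For an unbounded component $\Sigma_\alpha$, Proposition~\ref{prop-AF-static-zeroset} shows it has exactly $k_\alpha$ ends, each an annular graph over $\Omega_C$; truncating to the compact surface-with-boundary $\Sigma_\alpha\cap\{r\le R\}$ bounded by the $k_\alpha$ curves $\gamma_{_R}$, and noting that these annular ends leave the Euler characteristic unchanged, Gauss--Bonnet gives $\int_{\Sigma_\alpha\cap\{r\le R\}}K + \sum_i\int_{\gamma_{_R}}\kappa = 2\pi\chi(\Sigma_\alpha)$. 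Letting $R\to\infty$ and invoking \eqref{eq-g-curvature} (each boundary term contributing $2\pi$, with the geodesic-curvature convention there matching the inward boundary normal used in Gauss--Bonnet), we obtain $\int_{\Sigma_\alpha}K = 2\pi(\chi(\Sigma_\alpha)-k_\alpha)$. Substituting into the displayed identity gives the stated formula; the degenerate case $\Sigma=\emptyset$ is immediate, since then $f$ is bounded by Proposition~\ref{prop-AF-static-zeroset}, $M$ equals $\{f>0\}$ or $\{f<0\}$, and both sides vanish by part (i).

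I expect the main obstacle to be the bookkeeping in this last step: justifying carefully that the Euler characteristic is preserved under truncation (so that the annular structure provided by Proposition~\ref{prop-AF-static-zeroset} is genuinely used), reconciling the orientation convention for $\kappa$ in \eqref{eq-g-curvature} with that of Gauss--Bonnet, and supplying the absolute-integrability estimates that legitimize passing every divergence-theorem identity to the limit $R\to\infty$.
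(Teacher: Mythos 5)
Your proposal is correct and follows essentially the same route as the paper: the identity $f|\Ric|^2=\la\nabla^2 f,\Ric\ra=\Div\big(\Ric(\nabla f,\cdot)\big)$ integrated over large truncations (giving (i) from the decaying flux), then over the region $\{f>0\}$, where the boundary flux along $f^{-1}(0)$ equals $|\nabla f|K$ by Lemma \ref{lma-static-basic}, and finally Gauss--Bonnet together with the geodesic-curvature limit \eqref{eq-g-curvature} of Proposition \ref{prop-AF-static-zeroset} to evaluate $\int_{\Sigma_\a}K=2\pi(\chi(\Sigma_\a)-k_\a)$. The bookkeeping issues you flag (truncation, transversality, integrability of $K$) are handled in the paper exactly along the lines you indicate.
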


\begin{proof}
At each end $E_i$, $1 \le i \le k $, let $\{y_1, y_2, y_3\}$ be a set of coordinates in which $g$ satisfies \eqref{eq-AF-def}. If $f$ is unbounded in $E_i$, we  require
that $\{ y_1, y_2, y_3 \}$ be given by  Proposition \ref{prop-AF-static-zeroset}. For any large $r>0$, let $ S^i_r$ be the coordinate sphere $\{ |y|= r \}$ in $E_i$.
Let $ U_r $ be the region bounded by $S^1_r$, $\ldots$, $S^k_r$ in $M$.

By Lemma \ref{lma-linear-growth} and \eqref{eq-AF-def},  $|f| = O (r) $ and  $|\Ric|=O(r^{-2-\tau})$ in each $E_i$. Hence,
 the integrals in (i) and (ii) exist and are finite.
The static equation  \eqref{eq-static-f-s} implies
\be \label{eq-f-Ric-2}
 f | \Ric |^2 = \la \nabla^2 f , \Ric \ra .
 \ee
Integrating \eqref{eq-f-Ric-2} over $U_r$ and doing integration by parts, we have
\be \label{eq-f-Ric-2-Ur}
\begin{split}
\int_{U_r}f| \Ric |^2=& \sum_{i=1}^k \int_{S^i_r } \Ric (\nabla f, \nu)
\end{split}
\ee
where $\nu$ is the unit outward normal to $ S^i_r$ and we also have used the fact $g$ has zero scalar curvature.
Since $|\nabla f|$ is bounded by Proposition \ref{prop-AF-static-f},  $| \Ric |=O(r^{-2-\tau})$, and the area of $S_r^i$ is of order $r^2$, we  conclude that (i) holds  by letting
$ r \rightarrow \infty $ in \eqref{eq-f-Ric-2-Ur}.

To prove (ii),  we first choose $r$ sufficient large so that $\tilde \Sigma_\beta \subset U_r$, $ \forall \ \beta$.
If $f$ is unbounded, we assume it is unbounded in the ends $E_1$, $\ldots$, $E_l$, $ 1 \le l \le k $,  and    bounded in the other ends.
We then choose $r$ large enough so that outside each $S_r^i$ in $E_i$, $1 \le i \le l $, $f^{-1}(0)$ is  the graph of some function
$q = q(\bar{y})$ given by Proposition \ref{prop-AF-static-zeroset}; moreover, by \eqref{eq-condition-q} we can assume
the graph of $q( \bar{y})$ always intersects  $ S_r^i $ transversally.
Hence,  the set $U_r^+=U_r\cap \{f>0\}$ has Lipschitz boundary.
Integrating  \eqref{eq-f-Ric-2} over $U_r^+$  gives
\be\label{eq-unbounded-1}
\begin{split}
\int_{U_r^+} f | \Ric|^2 = & \int_{U_r \cap \lf(\cup_{\a=1}^m \Sigma_\a \ri)} \Ric (\nabla f, \nu) + \int_{\cup_{\beta = 0}^n \tilde  \Sigma_\beta }\Ric (\nabla f, \nu) \\
& \ +\int_{\p U_r   \cap \{f>0\}}\Ric (\nabla f, \nu) .
\end{split}
\ee
Here $ \nu$ denotes the outward unit normal to $\p U_r^+$.
As in (i),
\be\label{eq-unbounded-11}
 \lim_{r \rightarrow \infty} \int_{\p U_r  \cap \{f>0\}}\Ric (\nabla f, \nu)=0.
 \ee
 On each $\tilde \Sigma_\beta$ or $\Sigma_\a$,  by the fact $\nu= - \frac{ \nabla f}{|\nabla f|}$, we have
$$
\Ric (\nabla f, \nu) = - | \nabla f | \Ric (\nu, \nu)  = |\nabla f| K,
$$
where $K$ is the Gaussian curvature of $\tilde \Sigma_\beta$ or $\Sigma_\a$ by Lemma \ref{lma-static-basic} (iv).
Hence,
\be \label{eq-bounded-int}
\int_{ \cup_{\beta = 0}^n  \tilde  \Sigma_\beta}  \Ric (\nabla f, \nu )
 = 2 \pi \sum_{\beta = 0}^n  \tilde c_\beta \chi(\tilde \Sigma_\beta),
\ee
by the Gauss-Bonnet theorem,
and
\be \label{eq-unbounded-int}
\int_{ U_r \cap \lf(\cup_{\a=1}^m \Sigma_\a \ri) }
 \Ric (\nabla f, \nu )
=   \sum_{ \a = 0 }^m   {c}_\a \int_{{ U_r \cap \Sigma}_\a } K .
\ee
Note that  $\Sigma_\a$ is totally geodesic, hence \eqref{eq-AF-def} implies that $|K|$ decays on $\Sigma_\alpha$
in the order of $O(| y|^{-2 - \tau} )$ in  each  end $E_i $ with $ \Sigma_\a \cap E_i \neq \emptyset$.
But \eqref{eq-condition-q} implies that, on $ \Sigma_\a \cap E_i $,  $ |y|$ is equivalent to the intrinsic distance function to a fixed point in $\Sigma_\a$.
Therefore,
\be \label{eq-finite-K}
 \int_{\Sigma_\a} | K | < \infty .
 \ee
Let $ C_{_R}^i$ be the curve in $\Sigma_\a \cap E_i$ which is the graph of $q$ over
 the circle $\{ | \bar{y} | = R \}$ (see the definition of $C_{_R}$ in Proposition \ref{prop-AF-static-zeroset}).
 Let $\kappa$ denote the geodesic curvature of $C_{_R}^i$ in $\Sigma_\alpha$.
 By  the Gauss-Bonnet theorem and   Proposition \ref{prop-AF-static-zeroset},  we have
\be \label{eq-total-K-alpha}
\begin{split}
\int_{\Sigma_\a} K = & \ \lim_{R \rightarrow \infty}  \lf(  2\pi \chi (\Sigma_\a)  -  \sum_{i \in \Lambda_\a}   \int_{C^i_{_R} } \kappa  \ri) \\
= & \    2\pi \chi (\Sigma_\a)  - 2 \pi k_\alpha  ,
\end{split}
\ee
where  $ \Lambda_\a $ is the set of indices $i$ such that  $ \Sigma_\alpha \cap E_i \neq \emptyset $.
It follows from \eqref{eq-unbounded-int} -- \eqref{eq-total-K-alpha} that
\be \label{eq-GB-Sigma-a}
\lim_{ r \rightarrow \infty} \int_{ U_r \cap \lf(\cup_{\a=1}^m \Sigma_\a \ri) }
 \Ric (\nabla f, \nu )  =  2 \pi \sum_{\alpha = 0}^m c_\a   ( \chi (\Sigma_\a) - k_\a ) .
\ee
By \eqref{eq-unbounded-1} -- \eqref{eq-bounded-int} and \eqref{eq-GB-Sigma-a}, we conclude that
\be \label{eq-int-f-positive}
\int_{ \{ f > 0 \}  } f | \Ric|^2  = 2 \pi \sum_{\alpha = 0}^m c_\a   ( \chi (\Sigma_\a) - k_\a ) +  2 \pi \sum_{\beta = 0}^n  \tilde c_\beta \chi(\tilde \Sigma_\beta).
\ee
(ii) now follows from \eqref{eq-int-f-positive} and (i).
\end{proof}

\begin{remark}
From \eqref{eq-decay-sigma} and \eqref{eq-decay-tau}, one can  show
$ \lim_{r\to\infty}\frac{A(r)}{r^2}= \pi, $
where $A(r)$ is the area of $ D(r) \cap E_i$,  $i\in \Lambda_\a$, for  a geodesic ball $D(r)$  with radius $r$ in $\Sigma_\a$.
Therefore, the fact  $ \int_{\Sigma_\a} K = 2 \pi  ( \chi (\Sigma_\a) -  k_\a ) $ also follows from  results in
\cite{Hartman1964, Shiohama85}.
\end{remark}

Proposition \ref{prop-integration} implies  that $(M, g)$ must be $(\R^3, g_0)$ if  $M$  has simple topology.

\begin{thm} \label{thm-simple-top}
Let $(M,g)$ and $f$ be given as in Proposition \ref{prop-integration}.
If $M$ is orientable and   every  $2$-sphere  in $M$ is the boundary of a bounded domain,
 then $(M, g)$ is  isometric to $(\R^3, g_0)$.
 In particular, if  $M$ is homeomorphic to $\R^3$, then $(M, g)$ is isometric to $(\R^3, g_0)$.
\end{thm}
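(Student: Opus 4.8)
The plan is to combine the two integral identities of Proposition~\ref{prop-integration} with the topological hypothesis to force $\Ric \equiv 0$, and then conclude via volume comparison exactly as in Lemma~\ref{lma-zero-exist} and Theorem~\ref{thm-rigidity-bounded}. First I would observe that under the stated assumptions $M$ has exactly one end: any coordinate sphere $S_r^i$ in an end $E_i$ is an embedded $2$-sphere, hence bounds a bounded domain by hypothesis, and if there were two or more ends one could take $r$ large and obtain a bounded region whose boundary lies in a single end while a whole other end is "left out" — more precisely, $S_r^i$ separates $M$ with one side bounded, but the bounded side must then contain every other end, which is impossible since each other end is noncompact. So $k=1$.

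Next I would analyze the zero set $\Sigma = f^{-1}(0)$ using the simple-topology hypothesis and Proposition~\ref{prop-integration}(ii). Each bounded component $\tilde\Sigma_\beta$ is a closed orientable totally geodesic surface in $M$; being a closed embedded orientable surface, if it is not a $2$-sphere then $\chi(\tilde\Sigma_\beta)\le 0$, while if it is a $2$-sphere it bounds a bounded domain. By Lemma~\ref{lma-static-basic}(iv) the Gaussian curvature of $\tilde\Sigma_\beta$ equals $2R_{11}\ge 0$ only when... — actually I would argue the other direction: $\int_M |f|\,|\Ric|^2 \ge 0$, so the right-hand side of (ii) is $\ge 0$, i.e. $\sum_\alpha c_\alpha(\chi(\Sigma_\alpha)-k_\alpha) + \sum_\beta \tilde c_\beta \chi(\tilde\Sigma_\beta) \ge 0$. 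Since $k=1$, for each unbounded component $\Sigma_\alpha$ one has $k_\alpha=1$, so the unbounded contribution is $\sum_\alpha c_\alpha(\chi(\Sigma_\alpha)-1)$. Each $\Sigma_\alpha$ is a noncompact totally geodesic surface which, by Proposition~\ref{prop-AF-static-zeroset}, is a plane-like end outside a compact set; being a properly embedded orientable surface in $M$ with one planar end it has $\chi(\Sigma_\alpha)\le 1$. So every term $c_\alpha(\chi(\Sigma_\alpha)-1)\le 0$. For the bounded components: a closed totally geodesic $\tilde\Sigma_\beta$ with $K\ge 0$ everywhere would have to be a sphere or torus; but a $2$-sphere in $M$ bounds a bounded region, and a closed totally geodesic minimal surface cannot bound (it would violate the maximum principle against the positive-mean-curvature foliation $\{S_r\}$ near infinity, or simply: a closed minimal surface bounding a compact region with no other boundary contradicts the first variation / it cannot be the unique outermost minimal surface in a region with nonnegative... ). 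The cleanest route: each $\tilde\Sigma_\beta$ is a closed embedded orientable minimal surface, hence separates $M$; one side is bounded by hypothesis, but a compact Riemannian $3$-manifold with minimal boundary that is asymptotically flat only on... — so $\tilde\Sigma_\beta$ must be nonseparating or there are no bounded components at all. I would make this precise to conclude $n=-1$ (no bounded components) or $\chi(\tilde\Sigma_\beta)\le 0$, so the bounded contribution is $\le 0$ too.

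Therefore the right-hand side of (ii) is simultaneously $\ge 0$ and $\le 0$, hence $=0$, which forces $\int_M |f|\,|\Ric|^2 = 0$, hence $\Ric\equiv 0$ on $\{f\ne 0\}$ and by continuity on all of $M$. Thus $(M,g)$ is flat, complete, and asymptotically flat, so by volume comparison (Bishop–Gromov) it is isometric to $(\R^3, g_0)$. The final "in particular" statement is immediate since $\R^3$ has trivial $\pi_2$ and every embedded $2$-sphere bounds a ball by the Schoenflies theorem.

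The main obstacle I anticipate is the topological bookkeeping on the components of $f^{-1}(0)$ — specifically, rigorously ruling out bounded components $\tilde\Sigma_\beta$ with $\chi > 0$ and correctly handling the one-end reduction, since these steps use the global hypothesis about $2$-spheres bounding bounded domains in a way that must be carefully reconciled with the separation properties of embedded surfaces and the positive-mean-curvature foliation near infinity. The analytic core (the two integral identities) is already supplied by Proposition~\ref{prop-integration}, so the real work is purely topological/geometric packaging.
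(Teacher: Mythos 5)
Your overall strategy is the same as the paper's: show that the right-hand side of Proposition \ref{prop-integration} (ii) is nonpositive, conclude $\int_M |f|\,|\Ric|^2 \le 0$, hence $\Ric \equiv 0$, and finish by volume comparison. Your one-end reduction is correct but unnecessary, since $\chi(\Sigma_\alpha)\le 1$ together with $k_\alpha \ge 1$ already gives $\chi(\Sigma_\alpha)-k_\alpha\le 0$ for every unbounded component, with any number of ends.

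There is, however, a genuine gap at the one step where the hypothesis that every $2$-sphere bounds a bounded domain is actually needed: ruling out compact components $\tilde\Sigma_\beta$ of $f^{-1}(0)$ with $\chi(\tilde\Sigma_\beta)>0$, i.e.\ spherical components. The devices you sketch do not do this. The maximum principle against the positive-mean-curvature foliation $\{S_r\}$ only excludes closed minimal surfaces outside a compact set; nothing in minimal surface theory forbids a closed totally geodesic sphere bounding a bounded region in an asymptotically flat manifold of zero scalar curvature (the conclusion of the theorem is exactly what would forbid it, so it cannot be presupposed), and the ``separates, hence nonseparating or no bounded components'' passage is not an argument. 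You acknowledge this yourself (``I would make this precise\dots''), so the proposal as written does not close the step. The missing idea, which is the paper's, is elementary and uses the static potential rather than minimal surface theory: if a spherical component $\tilde\Sigma_\beta$ bounds a bounded domain $\Omega$, then since $\Delta f=0$ and $f=0$ on $\p\Omega=\tilde\Sigma_\beta$, the maximum principle gives $f\equiv 0$ in $\Omega$, and unique continuation then forces $f\equiv 0$ on all of $M$, contradicting the nontriviality of $f$. Hence every compact component has $\chi\le 0$, and the rest of your sign analysis goes through exactly as in the paper.
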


\begin{proof}
Suppose  $ \tilde{\Sigma}_\beta$ is  a compact component of $f^{-1}(0)$.
Since $M$ is orientable and $\tilde \Sigma_\beta $ is two-sided (with a nonzero normal $\nabla f$),
$\tilde{\Sigma}_\beta $ is orientable.
If  $\chi(\tilde{\Sigma}_\beta ) > 0$, then $\tilde{\Sigma}_\beta  $ is a $2$-sphere.
Hence $ \tilde{\Sigma}_\beta = \p \Omega$
for some bounded domain $\Omega$ in $M$ by the assumption.
This implies  $f\equiv0$ in $\Omega$ by the maximum principal  and therefore $f\equiv0$ in $M$ by unique
continuation \cite{Aronszajn-1957}. Thus,
$(M, g)$ is flat and is isometric to $(\R^3, g_0)$.
However, $(\R^3, g_0)$ does not contain any closed minimal surface.
Hence, we must have $\chi ( \tilde{\Sigma}_\beta )  \le 0 $ for all compact components $ \tilde{\Sigma}_\beta $
of $f^{-1}(0)$ if such a component exists.  On the other hand,
if $\Sigma_\a$ is a noncompact component of $f^{-1}(0)$, then  $\chi(\Sigma_\a)\le 1$.
By Proposition \ref{prop-integration} (ii), we have
\bee
  \int_M |f|\,|\Ric|^2\le 0.
  \eee
This implies $\Ric \equiv 0 $ and therefore $(M, g)$ is isometric to $(\R^3, g_0)$.
\end{proof}

In what follows,  we replace the topological assumption  in  Theorem \ref{thm-simple-top}
by an assumption that $f$ has no critical points.
For this purpose,   we analyze the behavior of   integral curves of the gradient of a static potential.
We formulate the results in a setting similar to that in Proposition \ref{prop-B-M}.

\begin{prop} \label{prop-integral-curve}
Let $(M,g)$ be a complete, connected, asymptotically flat $3$-manifold with nonempty boundary,
with  finitely many ends  $E_1$, $\ldots$, $E_k$.
Suppose there exists a static potential  $f$  with $f=0$ on $\p M$.
Given any point $p \in \Int(M)$,  the interior of $M$,  let $\gamma_p(t)$ be the integral curve of $\nabla f$ with $\gamma_p(0)=p$.
Let $(\a,\beta)$ be the maximal interval  of existence of $\gamma_p$ inside  $\Int(M)$.

\begin{enumerate}
\item[(a)]   If $\beta <\infty$, then  $\lim_{t\to\beta}\gamma_p(t)=x$  for some $x\in \p M$;
if $\a>-\infty$, then $\lim_{t\to\a}\gamma_p(t)=y$ for some $y\in \p M$. Consequently,  either $\a=-\infty$ or $\beta=\infty$.

    \vh

\item[(b)]  If $\beta=\infty$, then $\lim_{t\to\infty}f(\gamma_p (t))=b>-\infty$. Moreover,

\vh

    \begin{itemize}

   \item[(i)] if $b=\infty$, then, as $t \to \infty$, $ \gamma_p(t) $  tends to infinity in an end $E_i$ on which  $f$ is unbounded;
   \vh
   \item[(ii)]  if $b<\infty$, then $ b \neq 0 $  and $\lim_{t\to\infty}|\nabla f|(\gamma_p (t))=0$;
   \vh
       \item[(iii)] if $b<0$, then
    $    \bigcap_{t>0}\overline{\{\gamma_p (s )|  \ s >t\}}\neq\emptyset $
    and consists of critical points of $f$.
    \end{itemize}

  \vh

\item[(c)]  If $\a=-\infty$, then $\lim_{t\to-\infty}f(\gamma_p (t))=a<\infty$. Moreover,
\vh
    \begin{itemize}
    \item[(i)] if $a=-\infty$, then,  as $t \to - \infty$, $ \gamma_p(t) $  tends to infinity in an end $E_i$ on which  $f$ is unbounded;
    \vh
         \item[(ii)]  if $a>-\infty$, then $ a \neq 0 $  and $\lim_{t\to-\infty}|\nabla f|(\gamma_p (t))=0$;
         \vh
             \item[(iii)] if $a> 0$, then $  \bigcap_{t<0}\overline{\{\gamma(s )|  \ s <t\}}\neq\emptyset $
    and consists of critical points of $f$.
\end{itemize}

\end{enumerate}
\end{prop}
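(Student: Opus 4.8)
The plan is to drive everything from one identity: along an integral curve $\gamma_p$ of $\nabla f$ one has $\frac{d}{dt}f(\gamma_p(t))=|\nabla f|^2(\gamma_p(t))\ge 0$, so $f$ is nondecreasing along $\gamma_p$, and strictly increasing unless $\gamma_p$ is constant (at a critical point of $f$, in which case $(\alpha,\beta)=\R$). I would couple this with four standing facts: (1) $L:=\sup_M|\nabla f|<\infty$ and $\sup_M|\Ric|<\infty$, from Proposition \ref{prop-AF-static-f}(i), the curvature decay \eqref{eq-curvaturedecay} on the ends, and smoothness of $f$ up to $\p M$; (2) metric completeness of $M$; (3) the asymptotics of $f$ at each end from Propositions \ref{prop-AF-static-f} and \ref{prop-AF-static-zeroset}; (4) the fact that $\nabla f$ vanishes nowhere on $f^{-1}(0)$ (Lemma \ref{lma-static-basic}(i) and unique continuation).

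\textbf{Part (a).} Suppose $\beta<\infty$. Since $|\gamma_p'|=|\nabla f|\le L$, the curve has finite length on $[0,\beta)$, so $\dist(\gamma_p(s),\gamma_p(t))\le L|t-s|\to 0$ as $s,t\to\beta$, and $\gamma_p(t)\to x\in M$ by completeness. If $x$ were in $\Int(M)$, the flow of the smooth vector field $\nabla f$ on $\Int(M)$ would be defined on a fixed interval $(-\epsilon,\epsilon)$ over a neighborhood of $x$; picking $t$ close to $\beta$ with $\gamma_p(t)$ in that neighborhood and $t+\epsilon>\beta$, uniqueness would extend $\gamma_p$ beyond $\beta$ inside $\Int(M)$, contradicting maximality. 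Hence $x\in\p M$; the same argument at the other endpoint gives $\gamma_p(t)\to y\in\p M$ as $t\to\alpha^+$ when $\alpha>-\infty$. If both $\alpha>-\infty$ and $\beta<\infty$, then $f\circ\gamma_p$ is nondecreasing with limits $f(x)=f(y)=0$ at the two ends, hence $\equiv 0$, so $\nabla f\equiv 0$ along $\gamma_p$ and $\gamma_p$ is constant—impossible, since then $(\alpha,\beta)=\R$. So $\alpha=-\infty$ or $\beta=\infty$.

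\textbf{Part (b).} Assume $\beta=\infty$; then $f\circ\gamma_p$ is nondecreasing and $\ge f(p)$, so $b:=\lim_{t\to\infty}f(\gamma_p(t))$ exists in $(-\infty,+\infty]$, giving $b>-\infty$. I would split on whether the $\omega$-limit set $\Omega:=\bigcap_{t>0}\overline{\{\gamma_p(s):s>t\}}$ is empty. If $\Omega=\emptyset$, the curve leaves every compact set; as $M$ has finitely many ends and $\gamma_p([T,\infty))$ is connected, it eventually stays in one end $E_i$ with $|x(\gamma_p(t))|\to\infty$. If $E_i$ is unbounded, Proposition \ref{prop-AF-static-f}(i) gives $\nabla f\to v_i\ne 0$, so $\gamma_p'\to v_i$ and $f(\gamma_p(t))\to+\infty$, i.e.\ $b=\infty$; this is the alternative in (i), and since the remaining possibilities below both force $b<\infty$, it is the only way $b=\infty$ can occur, proving (i). If $E_i$ is bounded, Proposition \ref{prop-AF-static-f}(ii) gives $f(\gamma_p(t))\to a_i\ne 0$ and $|\nabla f|(\gamma_p(t))\to 0$, so $b=a_i\ne 0$. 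If instead $\Omega\ne\emptyset$, pick $q\in\Omega$: then $b=f(q)<\infty$, $\Omega$ is flow-invariant, and $f\equiv b$ on $\Omega$, so along the orbit of $q$ the nondecreasing function $f$ is constant, forcing $|\nabla f(q)|^2=0$; hence $\Omega$ consists of critical points of $f$, and by (4) it is disjoint from $f^{-1}(0)$, so $b\ne 0$.

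These cases yield (i) and the assertion $b\ne 0$ in (ii). For the remaining part of (ii) I would assume $b<\infty$ and set $\psi(t):=|\nabla f|^2(\gamma_p(t))\ge 0$: then $\int_0^\infty\psi=b-f(p)<\infty$, and by the static equation \eqref{eq-static-f-s},
\[
\psi'(t)=\la\nabla|\nabla f|^2,\nabla f\ra\big|_{\gamma_p(t)}=2\,\nabla^2 f(\nabla f,\nabla f)\big|_{\gamma_p(t)}=2f\,\Ric(\nabla f,\nabla f)\big|_{\gamma_p(t)},
\]
so $|\psi'|\le 2\big(\sup_t|f(\gamma_p(t))|\big)\big(\sup_M|\Ric|\big)L^2<\infty$ because $f(\gamma_p(t))\in[f(p),b]$. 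Thus $\psi$ is Lipschitz on $[0,\infty)$, and a nonnegative, integrable, Lipschitz function must tend to $0$, giving $|\nabla f|(\gamma_p(t))\to 0$ and completing (ii). For (iii), $b<0$ gives $b<\infty$, so either $\Omega\ne\emptyset$—in which case $\Omega$ is nonempty and consists of critical points, as wanted—or the curve escapes into an end; escape into an unbounded end forces $b=\infty$, impossible, and escape into a bounded end $E_j$ forces $b=a_j$, which I would rule out when $b<0$: since the curve tends to infinity in $E_j$ (where $|\nabla f|\to 0$), $\nabla f$ must point outward there, and matching this with the expansion of $f$ near infinity of $E_j$ from Proposition \ref{prop-AF-static-f}(ii) fixes the sign of $a_j$ incompatibly with $a_j<0$. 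Hence $\Omega\ne\emptyset$, which is (iii).

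\textbf{Part (c)} I would deduce from part (b) applied to $-f$, which is again a static potential vanishing on $\p M$: the backward integral curve of $\nabla f$ at $p$ is the forward integral curve of $\nabla(-f)$, and $a$ is minus the $b$-value of $-f$, so (i)--(iii) for (c) are exactly (i)--(iii) of (b) for $-f$.

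\textbf{Main obstacle.} The conceptual core is short (monotonicity of $f$ along the flow); the real work will be the interface between the gradient flow and the asymptotic geometry—making the ODE-extension and $\omega$-limit arguments precise on a manifold with boundary, and in particular excluding the "escape into a bounded end" alternative in (b)(iii) and (c)(iii), where boundedness of $f$ on the end does not suffice and one must invoke the refined expansion of Proposition \ref{prop-AF-static-f}(ii).
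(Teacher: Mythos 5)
Your parts (a), (b)(i), the gradient-decay half of (b)(ii), and the reduction of (c) to (b) are correct and essentially follow the paper's route (your Barbalat-type argument along the curve, using $\psi'=2f\,\Ric(\nabla f,\nabla f)$ and boundedness of $f$ along the curve, is in fact a cleaner version of the paper's estimate of $|\nabla|\nabla f|^2|$ on unit balls). The genuine gap is in (b)(iii) (hence also (c)(iii)), at exactly the step you flagged as the ``real work''. To exclude the alternative that $\gamma_p$ escapes to infinity in an end $E_j$ on which $f$ is bounded, you propose a local sign-matching argument: $\nabla f$ points outward along the escaping curve, and this ``fixes the sign of $a_j$''. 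This cannot work. Proposition \ref{prop-AF-static-f}(ii) gives $f=b-\frac{A}{|x|}+o(|x|^{-1})$ in $E_j$ with \emph{no} control on the sign of $A$ (the normalization $f=1-m|x|^{-1}+o(|x|^{-1})$ there is only ``up to rescaling'', possibly by a negative constant), and the direction of $\nabla f$ along an outgoing curve constrains the sign of $A$, not of $a_j=b$. Indeed the scenario you must exclude --- $f(\gamma_p(t))\nearrow b<0$ with $\gamma_p$ tending to infinity in $E_j$ --- is perfectly self-consistent at the level of asymptotics: it corresponds to $A>0$, i.e.\ $f<b$ near infinity, with $\nabla f$ pointing outward. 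What rules it out is a global input: for a static end with $f\to b\neq0$ one has $A/b=m$, the ADM mass of $E_j$, and the paper invokes the Riemannian positive mass theorem (applicable after doubling $(M,g)$ across the totally geodesic boundary $\p M$) to get $m>0$; then $b<0$ forces $A=mb<0$, so $f>b$ near infinity in $E_j$, contradicting $f(\gamma_p(t))<b$. Without the mass identification and the positive mass theorem, (b)(iii) is simply not provable by local asymptotic analysis, so this is a missing idea rather than a fixable detail.

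A secondary, repairable point: in the $\Omega\neq\emptyset$ branch you argue that $\Omega$ consists of critical points via flow-invariance, but that argument needs the limit point $q$ to lie in $\Int(M)$; a priori $q$ could lie on $\p M$ (only possible when $b=0$, since $f(q)=b$), where $\nabla f\neq0$ and no two-sided orbit in $\Int(M)$ exists, and then your conclusion ``$\Omega$ is disjoint from $f^{-1}(0)$, so $b\neq0$'' is circular. The fix is already in your own text: first prove $|\nabla f|(\gamma_p(t))\to0$ (your Lipschitz-plus-integrability argument only needs $b<\infty$), and then any limit point $q$ satisfies $\nabla f(q)=0$; by Lemma \ref{lma-static-basic}(i) such a point cannot lie on $f^{-1}(0)\supset\p M$, so $q$ is an interior critical point and $b=f(q)\neq0$. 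This is exactly the order in which the paper argues, and adopting it removes the hole in (b)(ii); the substantive defect remaining in your proposal is the absent positive-mass-theorem step in (b)(iii)/(c)(iii).
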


\begin{proof}
If $p$ is a critical point of $f$, then $\gamma_p(t)=p$, $\forall \  t \in (-\infty,\infty)$.
Also  $f(p)\neq0$ by Lemma \ref{lma-static-basic} (i).
The proposition is obviously true in this case.
In the following, we assume $ \nabla f (p) \neq 0$.
Then   $\nabla f(\gamma_p(t))\neq 0$ for all $t$ and
\be\label{eq-integralcurve-1}
\frac{d }{dt}f(\gamma_p (t))=|\nabla f|^2(\gamma_p (t))>0.
\ee
By Proposition \ref{prop-AF-static-f}, $ \lim_{x \rightarrow \infty} | \nabla f |$ exists and is finite at each end $E_i$.
Therefore,
\be\label{eq-integralcurve-2}
 | \nabla f | (x) < B, \ \forall \ x \in M
\ee
for some  constant  $B >0 $.  Suppose $\beta<\infty$, then for $t_2> t_1>0$,
\bee
d(\gamma_p (t_1),\gamma_p (t_2))  \le  \int_{t_1}^{t_2}|\gamma_p'(s)|ds
\le   \ (t_2-t_1) B,
\eee
where $ d (\cdot, \cdot)$ denotes the distance on $(M, g)$.
Hence $\lim_{t\to\beta}\gamma_p (t)= x$ for some $x \in  M$.
Since $(\a,\beta)$ is the maximal interval of existence of $\gamma_p(t)$ in $\Int(M)$,
we conclude $x\in \p M$.  Similarly, if $\a>-\infty$, then $\lim_{t\to\beta}\gamma_p(t)=y$, for some $y\in \p M$.
If $\a>-\infty$ and $\beta<\infty$, then $f(x)=0=f(y)$, which contradicts \eqref{eq-integralcurve-1}. This proves  (a).

To prove (b), we note that  \eqref{eq-integralcurve-1} implies  $f(\gamma_p(t))$ is increasing, hence
 $\lim_{t\to\infty}f(\gamma_p (t)) = b $ exists and $ b > - \infty$.
If  $b=\infty$, then
there exists  $t_n\to\infty$ such that $\gamma_p(t_n)\to\infty$ in some end $E_i$ on which $f$ is unbounded.
Let $\{ t_n' \} $ be any other sequence with $ t_n' \to \infty$.
We claim that $\gamma_p (t_n')$ must  tend to infinity in $E_i$ as well.
Otherwise, passing to subsequence, we may assume that $\gamma_p (t_n')$ tends to infinity in another end $E_j$ with $ j \neq i$.
But this implies that, for large $n$,  there exists $t_n'' $ between $t_n$ and $t_n'$ such that $ \gamma_p (t_n'')$ lies in a fixed compact set  $K$ of $M$
(for instance the set $K$ used in Definition \ref{def-def-AF}). This  contradicts  the fact $ \lim_{n \rightarrow \infty} f (\gamma_p ( t_n'')) \to b = \infty$.
Therefore, $ \gamma_p (t)$ tends to infinity in $E_i$ as $ t \rightarrow \infty$, which  proves (i) in (b).

Next, suppose $b<\infty$.  Let $\{ t_n \} $ be any sequence such that  $ t_n \to\infty$.
Given any fixed number   $ 0  < \delta <  \frac{1}{B}  $, we have
\bee
\int_{t_n-\delta}^{t_n+\delta}|\nabla f|^2(\gamma_p (t))dt
=  f(\gamma_p(t_n+\delta))-f(\gamma_p(t_n-\delta)) \rightarrow  0, \ n \to \infty .
\eee
Hence   there exists   $t_n'\in[t_n-\delta,t_n+\delta]$  such that $|\nabla f|(\gamma_p (t_n') )\to 0$.
Define $B_{\gamma_p (t_n) }  (1) = \{ q \in M \ | \ d (q , {\gamma_p (t_n) } ) < 1 \} $.
For large $n$,   \eqref{eq-integralcurve-2} implies   $|f|<2 |b|+2B$ on $B_{\gamma(t_n))}(1)$.
This together with the fact    $ \nabla^2 f = f \Ric $ and $(M, g)$ is asymptotically flat implies
 \be \label{eq-Hessf-bd}
 |\nabla^2f| \le C_1
 \ee
  on  $B_{\gamma(t_n))}(1)$ for some  constant $C_1$  independent on $n$ and $\delta$.
Now let $\phi = | \nabla f |^2$, then $ \nabla \phi $ is dual to the $1$-form
  $ 2 \nabla^2 f (\nabla f , \cdot)  . $
  By \eqref{eq-integralcurve-2} and \eqref{eq-Hessf-bd},
  we conclude
  $$ | \nabla \phi |  \le C_2 $$   on $B_{\gamma(t_n))}(1)$ by a constant $C_2$ independent on $n$ and $\delta$.
Note that  $d(\gamma(t_n),\gamma(t_n'))\le \delta B < 1 $, we therefore have
$$
\phi (\gamma_p(t_n))\le \phi(\gamma_p (t_n'))+2\delta B C_2.
$$
Since $\phi (\gamma_p (t_n'))\to0$ and $\delta$ can be arbitrarily chosen, we conclude that $\phi (\gamma_p(t_n)) \to 0 $ as $n\to\infty$.

We also want to show $b\neq0$. Let $\{ t_n \}$ be given as above.
Suppose $ \{ \gamma_p (t_n) \} $ is unbounded, then  passing to a subsequence we may assume
$\gamma_p (t_n) \to \infty $  in  some end $E_j$.  If $f$ is unbounded in $E_j$,  we would have
$|\nabla f|(\gamma_p (t_n) ) \ge C_3$ for some $C_3>0$ independent of $n$ by Proposition \ref{prop-AF-static-f} (i),
contradicting to  the fact $ | \nabla f | (\gamma_p (t_n) ) \rightarrow 0$.
Hence, $f$ is  bounded in $E_j$. By  Proposition \ref{prop-AF-static-f} (ii), we have  $b = \lim_{ x \rightarrow \infty, \ x \in E_j} f \neq0$.
Next,  suppose $\{ \gamma_p (t_n ) \}$ is bounded. Passing to a subsequence, we may assume
$\gamma_p (t_n)=q \in M$.  Then $q$ is a critical point of $f$ since $ | \nabla f | (\gamma_p (t_n) ) \rightarrow 0$.
Therefore, $b=f(q)\neq 0$ by Lemma \ref{lma-static-basic} (i). This completes the proof of (ii) in (b).

To prove (iii) of (b), it is sufficient to prove that if $b<0$ and if $\{ t_n \}$ is a sequence tending to   $\infty$,
then  $\{ \gamma_p (t_n) \}$ must be bounded, hence containing  a subsequence converging   to a critical point in $M$.
Suppose $\{ \gamma(t_n) \} $ is unbounded,  then passing to a subsequence we may assume $ \gamma(t_n) \to \infty$
in an end $E_j$ where  $f$ is bounded by the proof in (ii)  above. On $E_j$,
Proposition \ref{prop-AF-static-f} (ii) implies
\be \label{eq-f-expan-4-p}
 f = b - \frac{A}{|x|} + o ( | x |^{-1} ) , \ |x| \rightarrow \infty
\ee
 where   $ A$ is a  constant such that
\bee
\frac{A}{b} = m
\eee
which is the mass of $(M, g)$ at the end of $E_j$ (cf. \cite{Beig-1980, Bunting-Masood}).
By the positive mass theorem \cite{SchoenYau79, Witten81},
we have $ m > 0 $ (which can be seen by reflecting $(M, g)$ through $\p M$ since $\p M$ is totally geodesic).
Therefore,    $A < 0 $ because $b<0$.
As a result, $ f (\gamma_p (t_n) ) > b $ for large $n$ by \eqref{eq-f-expan-4-p}.
 But this leads to a contradiction to the fact that $ b = \lim_{ n \rightarrow \infty} f (\gamma_p (t_n) ) $ and
  $f(\gamma_p (t))$ is strictly increasing in $t$.   Therefore, $\{ \gamma_p (t_n) \}$ must be bounded.
This  proves (iii) of (b).

Claim (c) follows from (b) by replacing $f$ by $-f$.
\end{proof}

Using Proposition \ref{prop-integral-curve}, we obtain an analogue of Proposition \ref{prop-B-M}
with the assumption  $f > 0 $ replaced by 
that $f$ has no critical points.

\begin{cor} \label{cor-BM-no-critical-pts}
Let $(M,g)$ be a complete, connected, asymptotically flat $3$-manifold with nonempty   boundary $\p M$, with finitely many ends.
Suppose there exists a static potential $ f$ without critical points such that  $f=0$ on $\p M$.
 Then
  $(M, g)$ is isometric to a spatial Schwarzschild manifold with positive mass outside its horizon.
\end{cor}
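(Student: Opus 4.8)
The plan is to reduce the corollary to Proposition~\ref{prop-B-M}. Concretely, I would show that under the hypotheses the static potential $f$ cannot vanish anywhere in the interior $\Int(M)$. Granting this, since $\Int(M)$ is connected and $f\neq 0$ there, $f$ has a fixed sign on $\Int(M)$; replacing $f$ by $-f$ if necessary (which is again a static potential with no critical points and vanishing on $\partial M$, so all earlier results, in particular Proposition~\ref{prop-integral-curve}, still apply) we may assume $f>0$ on $\Int(M)$ and $f=0$ on $\partial M$, and then Proposition~\ref{prop-B-M} yields the stated conclusion.

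To prove that $f$ has no interior zero, suppose for contradiction that $\Sigma_0:=f^{-1}(0)\cap\Int(M)\neq\emptyset$. Since $f$ has no critical points, $\Sigma_0$ is a smooth, properly embedded hypersurface in $\Int(M)$ whose (nowhere vanishing) normal field is $\nabla f$, and along every integral curve $\gamma_q$ of $\nabla f$ one has $\tfrac{d}{dt}f(\gamma_q(t))=|\nabla f|^2(\gamma_q(t))>0$ (as in the proof of Proposition~\ref{prop-integral-curve}). In particular $f$ is strictly increasing along $\gamma_q$, so $\gamma_q$ meets the zero set $f^{-1}(0)=\partial M\sqcup\Sigma_0$ at most once.

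Next I would verify that each $\gamma_q$ meets $f^{-1}(0)$ exactly once: either $\gamma_q$ converges to a point of $\partial M$ in finite forward or backward time, or $\gamma_q$ crosses $\Sigma_0$ at an interior point. For $q$ with $f(q)=0$ this is trivial ($q\in\Sigma_0$). For $q$ with $f(q)>0$, trace $\gamma_q$ in the direction of decreasing $f$ and let $\alpha$ be the corresponding maximal time of existence in $\Int(M)$. If $\alpha$ is finite, Proposition~\ref{prop-integral-curve}(a) says $\gamma_q$ converges to a point of $\partial M$. If $\alpha=-\infty$, Proposition~\ref{prop-integral-curve}(c) gives a limit $a$ of $f$ along $\gamma_q$ that is $-\infty$ or finite; it cannot equal $0$ by part (c)(ii), and it cannot be positive because part (c)(iii) would produce a critical point of $f$; hence $a<0$ (or $a=-\infty$), so $f$ decreases from $f(q)>0$ through $0$ at an interior point of $\gamma_q$, which then lies in $\Sigma_0$. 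The case $f(q)<0$ is symmetric (apply the preceding to $-f$, noting that the integral curves of $\nabla(-f)$ have the same images as those of $\nabla f$ and that $(-f)^{-1}(0)\cap\Int(M)=\Sigma_0$). Hence $\Int(M)=A\sqcup B$, where $A$ consists of the $q$ whose integral curve converges to $\partial M$ and $B$ of the $q$ whose integral curve meets $\Sigma_0$; these are disjoint because $f$ is strictly monotone along each integral curve while vanishing on both $\partial M$ and $\Sigma_0$.

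Finally I would observe that $A$ and $B$ are both open and nonempty, contradicting the connectedness of $\Int(M)$. Openness follows from transversality: $\nabla f$ is the nonzero normal field of $\partial M$ and of $\Sigma_0$, so by continuous dependence of the flow, a small perturbation of an integral curve that exits through $\partial M$ (resp.\ crosses $\Sigma_0$) still does so. Nonemptiness is clear: $B\supseteq\Sigma_0\neq\emptyset$, and $A$ contains the interior points on the integral curve of $\nabla f$ issuing from any boundary point. This contradiction forces $\Sigma_0=\emptyset$, and the proof is complete. The step requiring the most care is the case analysis showing that every integral curve actually reaches $f^{-1}(0)$ — this is exactly where the hypothesis that $f$ has no critical points is used (to eliminate the cases (b)(iii)/(c)(iii) of Proposition~\ref{prop-integral-curve} and to rule out an integral curve limiting onto a critical point).
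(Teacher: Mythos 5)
Your proof is correct and reaches the conclusion through the same two pillars as the paper — Proposition \ref{prop-integral-curve} and Proposition \ref{prop-B-M} — but the connectedness argument is organized differently. The paper fixes a boundary component $\Sigma$, flows inward, and shows that the image $N$ of $\Sigma\times(0,\infty)$ under the gradient flow is both open (invariance of domain) and closed in $\Int(M)$; the delicate step is closedness, handled by a limit-curve argument: if $t_i\to\infty$, the time-shifted integral curves converge to a curve $\gamma_y$ defined on $(-\infty,0]$ along which $f\ge 0$, and Proposition \ref{prop-integral-curve}(c) then produces a critical point, a contradiction. This directly gives $f>0$ on $\Int(M)$. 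You instead assume an interior zero exists and split $\Int(M)$ into the set $A$ of points whose flow line exits through $\p M$ in finite time and the set $B$ of points whose flow line crosses the interior zero set, applying Proposition \ref{prop-integral-curve}(b)(iii), (c)(ii)--(iii) curve by curve to make this dichotomy exhaustive; openness, nonemptiness and connectedness then give the contradiction, after which $f$ is nonvanishing, of one sign, and (after possibly replacing $f$ by $-f$) Proposition \ref{prop-B-M} applies. Your route avoids the uniform-convergence/limit-curve step entirely and is arguably more elementary, at the cost of an extra sign discussion. The one place you are briefer than you should be is the openness of $A$: ``transversality plus continuous dependence'' deserves a sentence, e.g. note that $(y,s)\mapsto\gamma_y(s)$ maps $(\p M\cap U)\times(-\epsilon,0]$ diffeomorphically onto a one-sided collar (inverse function theorem, since $\nabla f$ is a nonzero normal along $\p M$), so every point of that collar lies in $A$; combined with flow-invariance of $A$ and continuous dependence over a compact time interval this yields openness. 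This is routine bookkeeping, not a gap, and it plays the role of the paper's invariance-of-domain step.
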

\begin{proof}
By definition, $\p M$ is compact.  Let $\Sigma$ be a component of $\p M$.
 Since $\nabla f \neq 0 $ at $\Sigma$ by  Lemma \ref{lma-static-basic} (i),
 we may assume that
  $\nabla f$ is inward pointing at $\Sigma$. Consider the map
$F:\Sigma\times(0,\infty) \rightarrow \Int (M)  $ given by $F(x,t)=\gamma_x(t)$ which is the integral curve of $\nabla f$
such that $\gamma_x(0)=x\in \Sigma$. By  Proposition \ref{prop-integral-curve} (a), $\gamma_x$ is defined on $[0,\infty)$.
The fact $f=0$ and $ \nabla f \neq 0 $ at  $\Sigma $ implies that  $F$  is one-to-one.
Hence, by the invariance of  domain, the image $N$ of $F$ is open in $\Int(M)$. We want to prove that $N$ is also closed in $\Int(M)$.

Let $y \in \Int (M) $ be a point that lies in the closure  of $N$ in  $\Int(M)$.
Then there exist $x_i\in \Sigma$ and $t_i > 0$ such that $\tilde x_i=\gamma_{x_i}(t_i)$ converge to $y$.
Passing to a subsequence, we may assume that $x_i\to x\in \Sigma$ and $t_i\to a$  with $0\le a\le\infty$.
We claim that $a<\infty$.
If this is true, we will have  $y = \lim_{ i \rightarrow \infty} \gamma_{x_i} (t_i)  = \gamma_x (a) \in N$.
Suppose $a=\infty$.  Consider the integral curve $\gamma_{\tilde x_i}(t)=\gamma_{x_i}(t+t_i)$, which is defined on $(-t_i, 0] $. Let $\gamma_y(t)$ be the integral curve of $\nabla f$ with $\gamma_y(0)=y$.
Since $ t_i \rightarrow \infty$,  $ \{ \gamma_{\tilde x_i}(t) \} $ converge uniformly to $\gamma_{y}(t)$ on $[- n ,0]$ for any  $n>0$.
In particular, $\gamma_y(t)$ is defined on $(-\infty,0]$.
On the other hand, $f(\gamma_{x_i}(t))$ is strictly increasing in $t$ for all $i$. Hence, $f(\gamma_{\tilde x_i}(t))>0$ on $(-t_i,0]$,
which implies $f(\gamma_y(t))\ge 0$ on $(-\infty,0]$.   By Proposition \ref{prop-integral-curve} (c),
there exists  a critical point of $f$ in $M$, contradicting the assumption that $f$ has no critical points.

Therefore, $N$ is  closed in $\Int(M)$ and hence $N=\Int(M)$. Since $f > 0 $ along each  $\gamma_x (t) $ on $(0 , \infty)$,
we conclude that  $f>0$ in $N=\Int(M)$. Hence,  $(M, g)$ is isometric to a spatial Schwarzschild manifold with positive mass outside its horizon
 by \cite{Bunting-Masood} or Proposition \ref{prop-B-M}.
\end{proof}

Corollary \ref{cor-BM-no-critical-pts} implies the following rigidity theorem. 

\begin{thm} \label{thm-rigidity-no-critical-point}
Let $(M,g)$ be a complete, connected, asymptotically flat $3$-manifold without boundary, with finitely many ends.
If there exists  a  static potential  $f$ on $(M, g)$ which has no critical points,
then $(M, g)$ is  isometric  to either $(\R^3, g_0)$ or  a  spatial Schwarzschild manifold
$ ( \R^3 \setminus \{ 0  \},  ( 1 + \frac{  m}{ 2 | x | }  )^4 g_0 ) $ with $m>0$.
\end{thm}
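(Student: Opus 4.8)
The plan is to split into two cases according to whether the static potential $f$ is bounded. If $f$ is bounded, Theorem \ref{thm-rigidity-bounded} applies directly and gives the conclusion. So the substance lies in the case where $f$ is unbounded, and there I would aim to show that $(M,g)$ must in fact be flat; once flatness is known, $(M,g)$ is a flat, complete, connected, asymptotically flat manifold without boundary, and is therefore isometric to $(\R^3,g_0)$ by the volume comparison argument already used in the proof of Lemma \ref{lma-zero-exist}.

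So assume $f$ is unbounded. Since $f$ has no critical points, $\nabla f$ never vanishes; since $f$ is unbounded it changes sign on an end by Proposition \ref{prop-AF-static-f}(ii), so $\Sigma:=f^{-1}(0)$ is a nonempty, embedded, totally geodesic hypersurface, each of whose (finitely many) components is a complete surface without boundary, and $\Sigma$ has at least one unbounded component by Proposition \ref{prop-AF-static-zeroset}. The key step is to show that $\Sigma$ has no compact component. Suppose $\tilde\Sigma$ were a compact component. Its normal bundle is trivial (spanned by $\nabla f$), so $\tilde\Sigma$ is two-sided and, being compact, sits inside a compact core of $M$, so that cutting $M$ along $\tilde\Sigma$ leaves all the ends untouched. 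If $\tilde\Sigma$ separates $M$, write $M\setminus\tilde\Sigma=M_1\sqcup M_2$ and set $\overline M_i=M_i\cup\tilde\Sigma$; each $\overline M_i$ is a complete, connected, asymptotically flat manifold with nonempty compact boundary $\tilde\Sigma$ (it cannot be compact, for a function with no critical points on a compact manifold-with-boundary attains its maximum and minimum on the boundary, which here would force $f\equiv0$ on $\overline M_i$, contradicting $\nabla f\neq0$). On $\overline M_i$, $f$ restricts to a static potential with no critical points vanishing on the boundary, so Corollary \ref{cor-BM-no-critical-pts} makes $(\overline M_i,g)$ isometric to a spatial Schwarzschild manifold outside its horizon; in particular $f$ is bounded on $\overline M_i$ and vanishes there only along $\tilde\Sigma$. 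Hence $\Sigma=\tilde\Sigma$, $M=\overline M_1\cup\overline M_2$, and $f$ is bounded on $M$ --- contrary to hypothesis. If instead $\tilde\Sigma$ does not separate $M$, cutting along $\tilde\Sigma$ produces a connected, complete, asymptotically flat manifold $\widehat M$ whose boundary is the disjoint union of two copies of $\tilde\Sigma$; the pullback of $f$ is a static potential on $\widehat M$ with no critical points vanishing on $\partial\widehat M$, so Corollary \ref{cor-BM-no-critical-pts} would force $(\widehat M,g)$ to be a spatial Schwarzschild manifold outside its horizon, which has connected boundary --- contradicting that $\partial\widehat M$ has two components. This proves the claim.

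With every component of $\Sigma$ now unbounded, Proposition \ref{prop-integration}(ii) reduces to
\[
\int_M |f|\,|\Ric|^2 \;=\; 4\pi\sum_\alpha c_\alpha\big(\chi(\Sigma_\alpha)-k_\alpha\big),
\]
and since each $\Sigma_\alpha$ is a noncompact surface without boundary we have $\chi(\Sigma_\alpha)\le1$, while $k_\alpha\ge1$, so the right-hand side is $\le0$. As the integrand is nonnegative, this forces $\int_M|f|\,|\Ric|^2=0$, hence $\Ric\equiv0$ on the open dense set $M\setminus\Sigma$, hence $\Ric\equiv0$ on all of $M$ by continuity. Thus $(M,g)$ is flat, and the proof concludes as indicated in the first paragraph.

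I expect the delicate point to be the cutting argument in the proof of the claim: one must check carefully that cutting $M$ along a compact two-sided totally geodesic surface genuinely produces asymptotically flat manifolds with compact boundary, so that Corollary \ref{cor-BM-no-critical-pts} is legitimately applicable, and, in the separating case, that the two Schwarzschild pieces actually reassemble all of $M$. The remaining ingredients --- the reduction to Theorem \ref{thm-rigidity-bounded} when $f$ is bounded, the Euler-characteristic bound for the (noncompact, totally geodesic) ends of $\Sigma$, and the flat-plus-asymptotically-flat rigidity --- are routine given what has been established.
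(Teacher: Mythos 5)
Your proof is correct, and its two key ingredients are exactly the ones the paper uses: Proposition \ref{prop-integration}(ii) together with the bound $\chi(\Sigma_\alpha)\le 1$, $k_\alpha\ge 1$ for noncompact zero components, and the device of cutting $M$ along a compact component of $f^{-1}(0)$ and applying Corollary \ref{cor-BM-no-critical-pts} to the resulting piece(s), with the non-separating case excluded because a Schwarzschild exterior has connected boundary. The difference is organizational: the paper splits according to whether $f^{-1}(0)$ has a compact component --- if not, Proposition \ref{prop-integration}(ii) gives flatness and hence $(\R^3,g_0)$; if so, the two cut pieces are Schwarzschild exteriors with isometric horizons and reglue to give the full Schwarzschild manifold directly, with no appeal to Theorem \ref{thm-rigidity-bounded}. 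You instead split on boundedness of $f$, quote Theorem \ref{thm-rigidity-bounded} for the bounded case, and in the unbounded case turn the compact-component alternative into a contradiction; this is perfectly valid but slightly more roundabout, since it imports the Bunting--Masood-ul-Alam machinery twice (once inside Theorem \ref{thm-rigidity-bounded}, once via Corollary \ref{cor-BM-no-critical-pts}), whereas the paper's case split lets the separating case terminate in the desired Schwarzschild conclusion rather than a contradiction. One small point you assert without proof: that $f$ is bounded on each Schwarzschild piece $\overline M_i$. This does need a line --- e.g.\ the corollary's proof gives $f>0$ in the interior of $\overline M_i$, so $f$ has no zeros outside a compact set and Proposition \ref{prop-AF-static-zeroset} forces $f$ to be bounded there; alternatively, since the piece is non-flat, $\dim(\K)\le 1$ by Theorem \ref{thm-dim2}, so $f$ is a multiple of the standard (bounded) Schwarzschild potential. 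With that line added, your argument is complete.
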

\begin{proof}
If $f^{-1}(0)$ has no compact component, then $(M,g)$ is isometric to $(\R^3,g_0)$ by Proposition \ref{prop-integration} (ii)
(cf. the proof of  Theorem \ref{thm-simple-top}).
Next, suppose $f^{-1}(0)$ has a  compact component $\Sigma$.
Cutting $M$ along $\Sigma$, and let $ (\tilde{M}, \tilde{g} ) $ be the metric completion  of   $( M\setminus \Sigma, g)$.
Then either $\tilde{M}$  has two components  whose boundary is isometric to $\Sigma$,
or $\tilde{M}$ is  connected  with two boundary components that are isometric to $\Sigma$.
Applying   Corollary \ref{cor-BM-no-critical-pts} to each component of $(\tilde{M}, \tilde{g})$ shows
that $(\tilde{M}, \tilde{g})$ can not be connected, and hence has two components each of which is  isometric to a spatial Schwarzschild manifold
with positive mass outside its horizon. Since their boundaries are isometric, we conclude that
$(M, g)$ itself is isometric to a complete  spatial Schwarzschild manifold
with positive mass.
\end{proof}


\end{document}